\newtheorem*{thm*}{Theorem}
\newtheorem*{prop*}{Proposition}
\newtheorem*{cor*}{Corollary}
\newtheorem{thm}{Theorem}[section]
\newtheorem{prop}[thm]{Proposition}
\newtheorem{cor}[thm]{Corollary}
\newtheorem{lemma}[thm]{Lemma}
\numberwithin{equation}{section}
\theoremstyle{definition}
\newtheorem{defn}[thm]{Definition}
\newtheorem{eg}[thm]{Example}
\newtheorem{rmk}[thm]{Remark}
\def\bR{\mathbb{R}}
\def\bC{\mathbb{C}}
\def\bZ{\mathbb{Z}}
\def\bP{\mathbb{P}}
\def\bL{\mathbb{L}}
\def\cD{\mathcal{D}}
\def\cO{\mathcal{O}}
\def\cN{\mathcal{N}}
\def\cP{\mathcal{P}}
\def\Stab{\mathrm{Stab}}
\def\Aut{\mathrm{Aut}}
\def\Hom{\mathrm{Hom}}
\def\Coh{\mathrm{Coh}}
\def\cat{\mathrm{cat}}
\def\pol{\mathrm{pol}}
\def\top{\mathrm{top}}
\def\tr{\mathrm{tr}}
\def\Pic{\mathrm{Pic}}
\def\k{\mathbf{k}}
\DeclareMathOperator{\RHom}{RHom}
\def\ph{h^\pol} %Temporary notation for categorical polynomial entropy
\def\ra{\rightarrow}
\def\bs{\backslash}
\def\ep{\epsilon}
\title{Categorical polynomial entropy}
\author{Yu-Wei Fan \and Lie Fu \and Genki Ouchi}
\begin{document}

\maketitle

\begin{abstract}
    For classical dynamical systems, the polynomial entropy serves as a refined invariant of the topological entropy. In the setting of categorical dynamical systems, that is, triangulated categories endowed with an endofunctor, we develop the theory of categorical polynomial entropy, refining the categorical entropy defined by Dimitrov, Haiden, Katzarkov, and Kontsevich. We justify this notion by showing that for an automorphism of a smooth projective variety, the categorical polynomial entropy of the pullback functor on the derived category coincides with the polynomial growth rate of the induced action on cohomology. We also establish in general a Yomdin-type lower bound for the categorical polynomial entropy of an endofunctor in terms of the induced endomorphism on the numerical Grothendieck group of the category. As examples, we compute the categorical polynomial entropy for some standard functors like shifts, Serre functors, tensoring line bundles, automorphisms, spherical twists, P-twists, and so on, illustrating clearly how categorical polynomial entropy refines the study of categorical entropy and enables us to study the phenomenon of categorical trichotomy. A parallel theory of polynomial mass growth rate is developed in the presence of Bridgeland stability conditions.
\end{abstract}

\setcounter{tocdepth}{1}
\tableofcontents

\section{Introduction}
\subsection{Background}
A topological dynamical system consists of a compact Hausdorff topological space $X$ and a continuous self-map $f\colon X\ra X$.
The \emph{topological entropy} of $f$, denoted by $h_\top(f)$, is a non-negative real number (possibly infinite) measuring the complexity of the system, by looking at the asymptotic behaviour of the iterations of the map $f$.
More precisely, using an auxiliary metric\footnote{See \cite{MR175106} for a metric-free (and equivalent) definition.} on $X$, the topological entropy $h_\top(f)$, which is independent of the metric, is defined to be the exponential growth rate of certain positive number $\mathrm{cov}(f, n, \ep)$ measuring the $\epsilon$-separation property of the $n$-th iteration of $f$:
\[
h_\top(f) \coloneqq \lim_{\ep\ra0}\limsup_{n\ra\infty}
\frac{\log\mathrm{cov}(f,n,\ep)}{n}.
\]
We refer to \cite{KatokHasselblatt, GromovAsterisque, CanPR} for more details. When restricting to the category of compact K\"ahler manifolds, the topological entropy can be understood via the cohomology; this is the content of the following fundamental result due to Gromov (the upper bound) and Yomdin (the lower bound). 

\begin{thm}[\cite{Gromov,Yomdin}]\label{thm:GromovYomdin}
Let $f$ be a $\mathcal{C}^\infty$ self-map of a compact differentiable manifold $X$.
Then
\[
h_\top(f)\geq\log\rho(f^*),
\]
where  $\rho(f^*)$ denotes the spectral radius of the induced endomorphism $f^*$ on the cohomology $H^*(X,\bC)$.
Moreover, if $X$ is a compact K\"ahler manifold and $f\colon X\ra X$ is a surjective holomorphic map, then
\[
h_\top(f)=\log\rho(f^*).
\]
\end{thm}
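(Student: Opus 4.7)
The plan is to prove the two inequalities separately using quite different techniques: the lower bound holds for any $C^\infty$ self-map and is due to Yomdin, while the upper bound crucially uses the K\"ahler holomorphic hypothesis and is due to Gromov. The common tool is Gromov's reformulation of topological entropy via iterated graph volumes: fix a Riemannian metric on $X$ and endow $X^n$ with the product metric, and let $\Gamma_n \subset X^n$ denote the graph of the tuple $(\mathrm{id}, f, f^2, \ldots, f^{n-1})$; then for any $C^1$ self-map one has $h_\top(f) = \limsup_n \frac{1}{n} \log \mathrm{vol}(\Gamma_n)$. Both directions then reduce to comparing $\mathrm{vol}(\Gamma_n)$ with the cohomological growth of $(f^*)^n$.

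For the lower bound, I would first choose a real cohomology class $\alpha \in H^{2k}(X,\bR)$ on which $(f^n)^*$ realises its spectral growth, so that $\|(f^n)^*\alpha\|$ grows at rate $\rho(f^*)^n$. Representing $\alpha$ by a closed differential form and pairing with the fundamental class of a smooth real cycle $Z$ of complementary dimension, the topological pairing $\langle (f^n)^*\alpha,[Z]\rangle$ is controlled above by $\mathrm{vol}(f^n(Z))$ up to a constant depending only on $\alpha$. Hence it suffices to show that for $C^\infty$ maps, the exponential volume growth of any smooth submanifold under iteration is bounded above by $h_\top(f)$; this is the substance of Yomdin's theorem, proved via a delicate quantitative estimate on how much a $C^k$ map can expand a simplicial piece of $Z$ (and then taking $k \to \infty$).

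For the upper bound in the K\"ahler holomorphic case, $\Gamma_n$ is a complex submanifold of $X^n$ of complex dimension $d = \dim_\bC X$, so Wirtinger's formula gives
\[
\mathrm{vol}(\Gamma_n) = \frac{1}{d!}\int_{\Gamma_n} \Omega_n^d, \qquad \Omega_n = \sum_{i=0}^{n-1} \pi_i^*\omega,
\]
where $\omega$ is the K\"ahler form on $X$ and $\pi_i$ the $i$-th projection. Expanding the $d$-th power and identifying $\Gamma_n \cong X$ via the first projection (under which $\pi_i|_{\Gamma_n}$ corresponds to $f^i$), each summand becomes an intersection number of the form $\int_X (f^{i_1})^*\omega \wedge \cdots \wedge (f^{i_d})^*\omega$. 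Each such term is bounded by $C \cdot \rho(f^*)^n$ by the definition of spectral radius on cohomology, and there are at most $n^d$ multi-indices; summing yields $\frac{1}{n}\log \mathrm{vol}(\Gamma_n) \to \log \rho(f^*)$.

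The principal obstacle is Yomdin's quantitative smoothness lemma, whose proof rests on semialgebraic geometry and subtle analytic estimates on the complexity of $C^k$ maps; this is a genuinely deep result that one should expect to cite rather than reprove. The upper bound, by contrast, reduces to a routine intersection-theoretic computation once Gromov's graph-volume formulation of entropy and Wirtinger's formula are in place, and the dichotomy in difficulty is exactly what will be mirrored later when passing from the topological/geometric setting to the categorical analogue.
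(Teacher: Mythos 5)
The paper does not prove this theorem; it is quoted from Gromov and Yomdin, so there is no in-house proof to compare against. Your outline is the standard one (graph-volume formulation of entropy, Yomdin's quantitative $C^\infty$ estimate for the lower bound, Wirtinger's formula for the upper bound), but there is a genuine gap in your treatment of the upper bound, which you wrongly characterize as ``a routine intersection-theoretic computation.''

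The problem is the step ``each such term is bounded by $C\cdot\rho(f^*)^n$ by the definition of spectral radius on cohomology.'' The statement is true, but the reason you give does not deliver it. The multilinear form $(\alpha_1,\dots,\alpha_d)\mapsto\int_X\alpha_1\wedge\cdots\wedge\alpha_d$ on $H^{1,1}$ is bounded by $\prod_j\|\alpha_j\|$, and $\|(f^{i_j})^*\omega\|$ grows like $d_1(f)^{i_j}$, where $d_1(f)=\rho(f^*|_{H^{1,1}})$. This naive bound therefore gives $C\,d_1(f)^{\sum_j i_j}$, which can be as large as $d_1(f)^{dn}$, and $d_1(f)^d$ is in general strictly greater than $\rho(f^*)=\max_p d_p(f)$. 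Concretely, for $f(x,y)=(x^2,y^3)$ on $\mathbb{P}^1\times\mathbb{P}^1$ with $\omega=\omega_1+\omega_2$, one has $d_1(f)=3$, $\rho(f^*)=d_2(f)=6$, and $d_1(f)^2=9>6$; the term $\int_X (f^m)^*\omega\wedge(f^m)^*\omega=2\cdot6^m$ is indeed $O(\rho(f^*)^m)$, but the naive norm bound would only give $O(9^m)$. The sharp estimate crucially exploits the Kähler positivity of the forms $(f^{i_j})^*\omega$, via the Teissier--Khovanskii (Hodge-index) inequalities that make the dynamical-degree sequence $p\mapsto d_p(f)$ log-concave and allow one to dominate each mixed term by $\max_p\int_X(f^m)^*\omega^p\wedge\omega^{d-p}$ with $m=\max_j i_j$. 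Without invoking this positivity structure the argument does not close, so the upper bound is no more ``routine'' than the lower one: both directions hide a substantive inequality (Yomdin's $C^k$-reparametrization lemma on one side, the Teissier--Khovanskii concavity on the other).

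The lower-bound sketch is essentially correct in spirit, though it silently uses that a spanning set of real homology classes can be represented by smooth (immersed) cycles (Thom) and that for a suitable fixed class one sees the full spectral growth of $f_*$; these points deserve a mention but are not gaps in the same sense.
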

In the compact K\"ahler setting, a more refined study involves the so-called \textit{dynamical degrees} (see the work of Dinh--Sibony \cite{DinhSibonyJAMS, DinhSibonyAnnals}); some important facts on this matter are collected in Section \ref{sec:DynamicalDegree}. We refer to the survey of Oguiso \cite{OguisoICM} and the references therein for an overview of interesting examples and results on dynamical systems in algebraic geometry. 

\medskip

Recently, there has been a growing interest in dynamical systems with vanishing topological entropy, highlighting the study of the so-called \emph{polynomial entropy}; see for instance \cite{BernardLabrousse,CanPR,HauseuxLeRoux,Labrousse12,Labrousse12balls,LabrousseMarco,Marco13,Marco18}.
More precisely, given a topological dynamical system $(X, f)$ with $h_\top(f)=0$, the polynomial entropy is defined to be the polynomial growth rate of the number $\mathrm{cov}(f,n,\ep)$:
\[
h_\pol(f) \coloneqq \lim_{\ep\ra0}\limsup_{n\ra\infty}
\frac{\log\mathrm{cov}(f,n,\ep)}{\log (n)}.
\]
This provides a refined/secondary invariant for the dynamical system.
In the spirit of Theorem \ref{thm:GromovYomdin}, given an automorphism of a compact K\"ahler manifold with null entropy, Cantat--Paris-Romaskevich \cite[Theorem 2.1 and Theorem 4.1]{CanPR} established some upper bounds for the polynomial entropy in terms of the induced action on cohomology, together with some topological data of the manifold. If certain dynamical degree is 1, the study of a secondary invariant called \textit{polynomial dynamical degrees} is initiated by Lo Bianco \cite[Section 1.3]{LoB}. In Section \ref{sec:PolDynamicalDegree} we extend his definition and results to the general case where the dynamical degree is arbitrary. We refer to \cite{CanPR} and references therein for other results on polynomial entropy in the compact K\"ahler setting.

\medskip

The \emph{categorical} counterpart of the notion of topological entropy was introduced by Dimitrov, Haiden, Katzarkov, and Kontsevich \cite{DHKK}.
The definition of \emph{categorical entropy} is motivated by the profound connection between the Teichm\"uller theory and the theory of stability conditions on triangulated categories, established recently in \cite{GMN,BS15,HKK,DHKK}.
Let $\cD$ be a triangulated category and let $F\colon\cD\ra\cD$ be a triangulated functor that we always assume not to be virtually zero, that is, $F^n\not\cong 0$ for all $n>0$. We view the pair $(\cD, F)$ as a \textit{categorical} dynamical system.
The categorical entropy of $F$, denoted by $h_t(F)$, which is a real function in one real variable, is defined to be the exponential growth rate of certain positive number $\delta_t(G, F^n(G))$ measuring the complexity of the image of a split generator $G$ of $\cD$ under the $n$-th iteration of $F$, with respect to $G$ itself:
\[
h_t(F) \coloneqq \lim_{n\ra\infty} \frac{\log \delta_t(G,F^n(G))}{n}\in[-\infty,\infty).
\]
Note that the categorical entropy is independent of the choice of the split generator.
We review the definition and basic properties of $\delta_t$ and $h_t$ in Section \ref{section:basicnotions}.
The value of the categorical entropy at $t=0$ is often of particular interest, and is denoted by
\[
h^\cat(F) \coloneqq h_0(F)\in \mathbb{R}_{\geq 0}.
\]
%One can consider the categorical entropy as the categorical analogue of topological entropy; and 
The connection to the classical dynamical system justifies the notion of categorical entropy:
it is proved in \cite[Theorem 2.12]{DHKK} under some technical condition and by Kikuta and Takahashi \cite{KiTa} in full generality that if $f\colon X\ra X$ is a surjective endomorphism of a smooth projective complex variety and
$\bL f^*\colon\cD^b(X)\ra\cD^b(X)$ is the derived pullback functor on the bounded derived category of coherent sheaves on $X$, then
\[
h^\cat(\bL f^*) = h_\top(f).
\]
See Theorem \ref{thm:KTO} for a more general version due to Ouchi \cite{Ouchi}.
\subsection{New invariants}
The goal of the present article is to lay the foundation of the theory of \emph{categorical polynomial entropy}, which should serve as the categorical counterpart of the aforementioned notion of polynomial entropy. Inspired by \cite{CanPR},
we propose to define it as the polynomial growth rate of $\delta_t(G,F^n(G))$:
\[
h_t^\pol(F)\coloneqq\limsup_{n\ra\infty}\frac{\log\delta_t(G,F^n(G))-nh_t(F)}{\log (n)}\in[-\infty,\infty].
\]
Note that $h_t^\pol(F)$ is independent of the choice of the split generator $G$.
Moreover, the definition makes sense even if the categorical entropy $h_t(F)$ does not vanish, as long as $h_t(F)\neq-\infty$.
We refer to Section \ref{section:catpolyentropy} for some basic properties of $h_t^\pol(F)$. Its value at $t=0$ is denoted by
\[
\ph(F)\coloneqq \ph_0(F)
\]
and is well-defined since $h_0(F)\geq0$ is a real number.

If the triangulated category $\cD$ admits a Bridgeland stability condition $\sigma$, the notion of \textit{mass growth} of an endofunctor $F$, denoted by $h_{\sigma, t}(F)$,  was defined in \cite{DHKK} and studied in \cite{Ikeda}, as a comparable invariant of categorical entropy. Mass growth is the categorical analogue of the \textit{volume growth} for classical dynamical systems equipped with a Riemannian metric, see Yomdin \cite{Yomdin}. We develop in Section \ref{sec:MassGrowth} the basic theory of \textit{polynomial mass growth} for a categorical dynamical system endowed with a stability condition $(\cD, F, \sigma)$, in a parallel way to the theory of categorical polynomial entropy. The polynomial mass growth rate is invariant under deformation of the stability condition inside the stability manifold (Lemma \ref{lemma:DeformStab}).

Let us put those invariants into perspective, with the new ones colored in blue:
\begin{center}
\begin{tabular}{c|c|c}
    & $\substack{\text{Topological system }\\ (X, f\colon X\to X)}$ & $\substack{\text{Categorical system }\\ (\cD, F\colon \cD\to \cD)}$ \\
    \hline
    Entropy & $h_\top(f)$ & $h_t(F)$\\
   Polynomial entropy & $h_\pol(f)$ & \color{blue}{$\ph_t(F)$}\\
   Volume/mass growth & $\operatorname{lov}(f)$& $h_{\sigma, t}(F)$\\
   Polynomial volume/mass growth & $\operatorname{povol}(f)$ & \color{blue}{$\ph_{\sigma, t}(F)$}
\end{tabular}
\end{center}

\subsection{Gromov--Yomdin-type results}
%There are several reasons that one would be interested in studying categorical polynomial entropy.
As a justification of our definition, when the categorical dynamical system comes from a classical one, namely, $$(\cD=\cD^b(X), F=\mathbb{L}f^*)$$ for a surjective endomorphism $f$ of a smooth projective variety $X$ defined over an algebraically closed field,  it turns out that the categorical polynomial entropy behaves better than the (topological) polynomial entropy, in view of Theorem~\ref{thm:GromovYomdin}.
More precisely, in Theorem \ref{thm:EntropyOfMorphism}, we show that the categorical polynomial entropy of $\mathbb{L}f^*$  can be computed using polynomial dynamical degrees (Definition \ref{def:PolDynDeg}), and is equal to the polynomial growth rate, in the generalized sense of Definition \ref{def:polygrowthratelinearmap}, of the induced action on the numerical Grothendieck group (or the cohomology in the complex setting) of $X$. Here in the introduction, let us only state the following special case over the complex numbers and with the additional assumption that the entropy vanishes.
 
\begin{thm}[see Corollaries \ref{cor:polycatautomorphism} and \ref{cor:MassGrowthMorphism}]
Let $f$ be a surjective endomorphism of a smooth projective complex variety $X$. Assume that the topological entropy of $f$ is zero. Then $\ph_t(\mathbb{L}f^*)$ is a constant function with value
$$\ph(\mathbb{L}f^*)=\lim_{n\to \infty} \frac{\log\|(f^n)^*\|}{\log (n)}=s(f^*),$$
where $f^*\in \operatorname{End}(H^*(X, \mathbb{Z}))$ is the induced endomorphism on cohomology and $s(f^*)+1$ is the maximal size of its Jordan blocks. If moreover $\cD^b(X)$ admits a numerical stability condition $\sigma$, then $\ph_{\sigma, 0}(\mathbb{L}f^*)$ is also equal to the above quantities.
\end{thm}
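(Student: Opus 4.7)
The plan is to deduce the statement from the more general Theorem \ref{thm:EntropyOfMorphism}, which identifies $\ph_t(\mathbb{L}f^*)$ with the polynomial growth rate (in the sense of Definition \ref{def:polygrowthratelinearmap}) of the endomorphism induced by $f$ on the numerical Grothendieck group, equivalently, via the Chern character, on $H^*(X,\mathbb{C})$. Under the extra hypothesis $h_\top(f)=0$, Theorem \ref{thm:GromovYomdin} forces the spectral radius $\rho(f^*)$ on cohomology to equal $1$, and in this situation the categorical polynomial entropy should collapse to a standard linear-algebraic invariant.

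First, I would apply Theorem \ref{thm:EntropyOfMorphism} at $t=0$, reducing $\ph(\mathbb{L}f^*)$ to the generalized polynomial growth rate of $f^*$ acting on $H^*(X,\mathbb{C})$. Since $\rho(f^*)=1$, the exponential normalization $n h_0(\mathbb{L}f^*) = n\log\rho(f^*) = 0$ appearing in the definition of $\ph$ is trivial, and the growth rate reduces to
\[
\limsup_{n\to\infty}\frac{\log \|(f^n)^*\|}{\log n}.
\]
A Jordan normal form argument then shows that the limsup is in fact a limit and equals $s(f^*)$: eigenvalues of modulus strictly less than one contribute terms decaying exponentially, while a Jordan block of size $s+1$ at an eigenvalue on the unit circle contributes a factor of order $n^s$ to the operator norm of $(f^n)^*$. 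This proves the two displayed equalities of the theorem.

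For the constancy of $t \mapsto \ph_t(\mathbb{L}f^*)$, note that the hypothesis $h_\top(f)=0$, combined with the log-concavity of the dynamical degrees $\lambda_k(f)$ and the fact that $\lambda_0(f)=1$, forces $\lambda_k(f)=1$ for every $k$. Consequently, all the polynomial dynamical degrees $\lambda_k^{\mathrm{pol}}(f)$ (Definition \ref{def:PolDynDeg}) appearing in Theorem \ref{thm:EntropyOfMorphism} enter the formula without any $t$-dependent exponential weighting, and the resulting expression for $\ph_t(\mathbb{L}f^*)$ is manifestly independent of $t$. For the assertion under a numerical stability condition, I would appeal to Corollary \ref{cor:MassGrowthMorphism}: since $\sigma$ factors through $K_{\mathrm{num}}(X)$, the polynomial mass growth $\ph_{\sigma,0}(\mathbb{L}f^*)$ is again controlled by the action of $f^*$ on $K_{\mathrm{num}}(X)$ and, by the same linear algebra, equals $s(f^*)$.

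The main obstacle is matching the generalized polynomial growth rate of a linear map from Definition \ref{def:polygrowthratelinearmap} with the classical Jordan-block invariant $s(f^*)$ in the possible presence of eigenvalues of modulus strictly less than $\rho(f^*)=1$; but a routine block-by-block estimate on the Jordan form of $f^*_\mathbb{C}$ delivers the desired identification, after which the three items of the theorem assemble uniformly from Theorem \ref{thm:EntropyOfMorphism} and Corollary \ref{cor:MassGrowthMorphism}.
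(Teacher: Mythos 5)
Your overall strategy — reduce to Theorem \ref{thm:EntropyOfMorphism} and Corollary \ref{cor:MassGrowthMorphism} — matches the paper's, but three of the intermediate steps are handled incorrectly or glossed over.

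\textbf{Constancy of $\ph_t(\mathbb{L}f^*)$.} Your argument here is misdirected: you attribute the $t$-independence to the vanishing of the dynamical degrees, but constancy is already asserted in Theorem~\ref{thm:EntropyOfMorphism} \emph{without} the hypothesis $h_\top(f)=0$, and the paper's proof uses Lemma~\ref{lemm:ConstantFunction} (the fact that $\mathbb{L}f^*=f^*$ preserves the standard $t$-structure of finite cohomological dimension, so the $\operatorname{Ext}$-distances $\epsilon_t$ vary in $t$ by a bounded multiplicative factor). The dynamical degrees play no role in this step, and your phrase about ``$t$-dependent exponential weighting'' does not correspond to anything in the formula of Theorem~\ref{thm:EntropyOfMorphism}.

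\textbf{Passage from $N^*(X)$ to $H^*(X,\mathbb{C})$.} You write that the polynomial growth rate on the numerical Grothendieck group is ``equivalently, via the Chern character,'' the polynomial growth rate on $H^*(X,\mathbb{C})$. This is not automatic. The Mukai vector identifies $\mathcal{N}(X)_\mathbb{Q}$ with the numerical Chow groups $N^*(X)_\mathbb{Q}$, which sit in the \emph{algebraic} part of cohomology; a priori the transcendental classes in $H^{i,j}$ with $i\neq j$ could carry larger Jordan blocks for $f^*$. The paper closes this gap with Proposition~\ref{prop:PolDynDegCohomology}, whose proof controls $\rho(f^*_{H^{i,j}})$ and $s(f^*_{H^{i,j}})$ via the map $(f,f)$ on $X\times X$, the bound $\|(f^n)^*_{H^{i,j}}\|^2\leq C\|(f^n,f^n)^*_{H^{i+j,i+j}}\|$, and the concavity of $\{s_p(f)\}$ from Lemma~\ref{lemma:ConcavityS}. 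This is not a routine step and must be cited.

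\textbf{Identification of $s(f^*)$ with the maximal Jordan-block size.} Definition~\ref{def:polygrowthratelinearmap} and Lemma~\ref{lemma:PolGrowLim} give you the maximal Jordan-block size at eigenvalues of \emph{maximal modulus} $\rho(f^*)=1$. To match the statement's unqualified ``maximal size of its Jordan blocks'' you need to know that $f^*$ has no eigenvalues of modulus strictly less than~1. The paper handles this via the integral structure on $H^*(X,\mathbb{Z})$: the eigenvalues are algebraic integers of modulus at most~1 whose conjugates are again eigenvalues, so Kronecker's theorem forces them to be roots of unity and $f^*$ is virtually unipotent. Your sketch only notes that small-modulus eigenvalues decay exponentially, which handles the growth estimate but not the clean equivalence with the total maximal Jordan block. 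Adding the Kronecker step would complete this part of your argument and bring it in line with Corollary~\ref{cor:polycatautomorphism}.

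The mass-growth assertion via Corollary~\ref{cor:MassGrowthMorphism} is fine as outlined.
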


One should compare the above theorem to the estimates on the (topological) polynomial entropy of automorphisms of compact K\"ahler manifold established by Cantat and Paris-Romaskevich \cite[Theorem 2.1 and Theorem 4.1]{CanPR}.

In general, we establish the following Yomdin-type lower bound for the categorical polynomial entropy.

\begin{thm}[see Propositions~\ref{prop:GYLowerBoundPolEntropy} and \ref{prop:GYLowerBoundMassGrowth}]
Let $F\colon\cD\ra\cD$ be an endofunctor on a triangulated category $\cD$, and let $\cN(F)$ be the induced endomorphism of the numerical Grothendieck group $\cN(\cD)$. 
    \begin{enumerate}[label=(\roman*)]
        \item If $\cD$ is saturated, then $h^\cat(F)\geq\log \rho(\cN(F))$ by \cite[Theorem~2.13]{KST18}. If the equality holds (for instance when $h^\cat(F)=0$), then
            \[
            h^\pol(F) \geq s(\cN(F)),
            \]
        where $s(\cN(F))$ is the polynomial growth rate of the linear map $\cN(F)$ (cf.~Definition~\ref{def:polygrowthratelinearmap}).
        \item If $\cD$ admits a numerical stability condition $\sigma$, then $h_{\sigma, 0}(F)\geq\log \rho(\cN(F))$ by \cite[Theorem~1.2]{Ikeda}. If the equality holds (for instance when $h_{\sigma, 0}(F)=0$), then
            \[
            \ph_{\sigma, 0}(F) \geq s(\cN(F)).
            \]
    \end{enumerate}
\end{thm}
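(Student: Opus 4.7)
The strategy for both parts is to upgrade the exponential lower bounds of \cite{KST18} and \cite{Ikeda} to the polynomial scale, by establishing an operator-norm comparison of the form
\[
\|\cN(F)^n\| \leq C\cdot \delta_0(G, F^n(G)) \qquad \bigl(\text{resp. } \leq C\cdot m_\sigma(F^n(G))\bigr)
\]
for large $n$, and then extracting the polynomial rate from the Jordan canonical form of $\cN(F)$, namely $\|\cN(F)^n\| \sim c\, n^{s(\cN(F))}\, \rho(\cN(F))^n$.

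For part (i), fix a split generator $G$. An induction along a tower of cones realising $\delta_0(G, F^n(H'))$, using the long exact sequence of $\Hom$ at each cone, gives
\[
|\chi(H, F^n H')| \leq \dim\Hom^*(H, F^n H') \leq \dim\Hom^*(H, G)\cdot \delta_0(G, F^n(H'))
\]
for all $H, H' \in \cD$. Combined with the submultiplicativity $\delta_0(G, F^n(H')) \leq \delta_0(G, F^n(G))\cdot \delta_0(G, H')$ (apply $F^n$ to a tower from $G$ to $H'$, then concatenate with one from $G$ to $F^n(G)$), this yields
\[
|\langle [H], \cN(F)^n[H']\rangle_\chi| \leq C_{H, H'}\cdot \delta_0(G, F^n(G)),
\]
where $\langle\cdot,\cdot\rangle_\chi$ denotes the Euler pairing. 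Letting $[H], [H']$ run over lifts of a $\bZ$-basis of $\cN(\cD)$ and using the non-degeneracy of $\chi$ on $\cN(\cD)$ in the saturated setting, we obtain $\|\cN(F)^n\| \leq C\cdot \delta_0(G, F^n(G))$. The Jordan-form asymptotic together with the hypothesis $h^\cat(F) = \log\rho(\cN(F))$ then gives $h^\pol(F) \geq s(\cN(F))$ upon taking logarithms and limsup.

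For part (ii), applying $F^n$ to a tower from $G$ to $H'$ and using the triangle inequality for mass together with $m_\sigma(G[k]) = m_\sigma(G)$ yields
\[
m_\sigma(F^n(H')) \leq \delta_0(G, H')\cdot m_\sigma(F^n(G)),
\]
while $|Z(E)| \leq m_\sigma(E)$ gives $|Z(\cN(F)^n[H'])| \leq \delta_0(G, H')\cdot m_\sigma(F^n(G))$. Varying $[H']$ over a basis of $\cN(\cD)$ yields
\[
\|Z\circ \cN(F)^n\| \leq C\cdot m_\sigma(F^n(G))
\]
as linear functionals on $\cN(\cD)\otimes\bR$. The main technical obstacle appears at this point: the growth of $\|Z\circ \cN(F)^n\|$ reaches $n^{s(\cN(F))}\rho(\cN(F))^n$ only when $Z$ lies outside a proper algebraic subvariety of $\Hom(\cN(\cD), \bC)$ determined by the Jordan structure of $\cN(F)$, and the $Z$ of the given $\sigma$ may fail to be generic. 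We resolve this by deformation: by Bridgeland's theorem the central charge map $\cZ\colon \Stab(\cD)\to \Hom(\cN(\cD), \bC)$ is a local homeomorphism, so a neighbourhood of $\sigma$ sweeps out an open neighbourhood of $Z$, which meets the dense generic locus. Picking such a $\sigma'$ and invoking the deformation-invariance of $\ph_{\sigma, 0}$ (Lemma~\ref{lemma:DeformStab}), we conclude $\ph_{\sigma, 0}(F) = \ph_{\sigma', 0}(F) \geq s(\cN(F))$.

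The principal obstacle is the reduction in (ii) from the single linear datum of $Z$ to the full operator-norm growth of $\cN(F)^n$, handled via stability-condition deformation and the content of Lemma~\ref{lemma:DeformStab}; part (i) sidesteps this issue because the Euler pairing is a full non-degenerate bilinear form. A secondary verification is that the constants $\delta_0(G, H')$ and $\dim\Hom^*(H, G)$ appearing above are independent of $n$, which is immediate from their definitions but essential for the limsup argument to yield the claimed polynomial exponent.
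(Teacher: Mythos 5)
Your proof is correct and takes essentially the same approach as the paper's. Both parts bound the complexity (resp.~mass) from below by the Euler pairing against a basis of $\cN(\cD)$, and then exploit the Jordan-form asymptotics $\|\cN(F)^n\|\sim c\,n^{s(\cN(F))}\rho(\cN(F))^n$; for part (ii) both resolve the potential degeneracy of $Z$ by deforming inside the stability manifold via Lemma~\ref{lemma:DeformStab}. The only cosmetic difference is that you phrase the estimate directly as an operator-norm bound $\|\cN(F)^n\|\leq C\,\delta_0(G,F^n(G))$, whereas the paper tests against a single vector $v$ lying in a maximal Jordan block and shows $\epsilon(G',F^n(G''))\geq\|\cN(F)^n v\|$; these are equivalent routes to the same conclusion.
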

In Proposition~\ref{prop:hereditaryalgebraGY} and Corollary \ref{cor:hereditaryalgebraGYMass}, we show that the lower bounds are attained for any autoequivalence of the bounded derived category of a hereditary finite dimensional $\bC$-algebra.

\subsection{Categorical trichotomy}
The trichotomy for birational automorphisms of projective surfaces is one of the most fascinating phenomena in classical (algebraic) dynamical systems, see Cantat \cite{Cantat99} and Diller--Favre \cite{DillerFavre}. Recall that given a smooth projective surface $X$, a birational self-map $f\colon X\dashrightarrow X$ falls into three possibilities: \textit{elliptic}, \textit{parabolic}, and \textit{loxodromic}. More precisely, let $H$ be a polarization on $X$, which allows one to define the algebraic degree $\deg_H(f)$, then $f$ is called
\begin{itemize}
    \item elliptic, if $\deg_H(f^n)$ is bounded in $n$; in this case, $f$ conjugates to a map that is virtually isotopic to the identity;
    \item parabolic, if $\deg_H(f^n)$ has polynomial growth in $n$; in this case, $f$ preserves a  fibration;
    \item loxodromic, if $\deg_H(f^n)$ has exponential growth in $n$; in this case, $f$ has positive entropy and there are strong restrictions to the surface.
\end{itemize}
We refer to \cite{DillerFavre} for more details. 
% we remark that the notion of categorical polynomial entropy is necessary for studying the categorical analogue of the classification of birational maps on projective surfaces into three classes: elliptic, parabolic, and loxodromic.
% For a birational map $f\colon X\dashrightarrow X$, one can define its \emph{degree} $\deg_H(f)$ with respect to a polarization $H$ on $X$.
% The classification is given by:
% \begin{itemize}
%     \item Elliptic: if $\deg_H(f^n)$ is bounded in $n$.
%     \item Parabolic: if $\deg_H(f^n)$ has polynomial growth in $n$.
%     \item Loxodromic: if $\deg_H(f^n)$ has exponential growth in $n$.
% \end{itemize}

By combining the force of categorical entropy and categorical polynomial entropy, we are naturally led to study the analogous \textit{categorical} trichotomy by calling a categorical dynamical system $(\cD, F)$
\begin{itemize}
    \item elliptic, if $h^\cat(F)=\ph(F)=0$;
    \item parabolic, if $h^\cat(F)=0$, $\ph(F)>0$;
    \item loxodromic, if $h^\cat(F)>0$.
\end{itemize}
We are indeed able to pinpoint some natural examples illustrating this phenomenon. The first instance we discover concerns the autoequivalences of derived categories of elliptic curves, refining Kikuta's result \cite[Section 3.2]{Kikuta}.
As before, $\mathcal{N}(F)$ is the induced endomorphism on the numerical Grothendieck group, which is 2-dimensional in the curve case.
\begin{thm}[see Theorem~\ref{thm:ellcurveclassify}]
Let $\cD$ be the bounded derived category of a smooth projective curve of genus 1 defined over an algebraically closed field, 
and let $F\colon\cD\ra\cD$ be an autoequivalence.
Then
    \begin{enumerate}[label=(\roman*)]
        \item $h^\pol(F)=h^\cat(F)=0$ if and only if
        $\cN(F)$ is elliptic (i.e.~$|\tr(\cN(F))|<2$) or 
        $\cN(F)=\pm\mathrm{id}$.
        
        \item $h^\pol(F)>0$ and $h^\cat(F)=0$ if and only if
        $\cN(F)$ is parabolic (i.e.~$|\tr(\cN(F))|=2$)
        and $\cN(F)\neq\pm\mathrm{id}$. In this case, $\ph(F)=1$.
        
        \item $h^\cat(F)>0$ if and only if 
        $\cN(F)$ is hyperbolic (i.e.~$|\tr(\cN(F))|>2$). In this case, $\ph(F)=0$.
    \end{enumerate}
\end{thm}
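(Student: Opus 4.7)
The plan is to reduce the analysis to the induced action $\cN(F) \in \GL(2,\bZ)$ on the numerical Grothendieck group $\cN(\cD) \cong \bZ^2$, and then to combine Kikuta's entropy computation \cite[Section 3.2]{Kikuta} with the Gromov--Yomdin-type lower bound from Proposition \ref{prop:GYLowerBoundPolEntropy}. A short linear-algebra calculation on $2 \times 2$ integer matrices gives
\[
s(\cN(F)) =
\begin{cases}
1 & \text{if } \cN(F) \text{ is parabolic and } \cN(F) \neq \pm\mathrm{id},\\
0 & \text{otherwise,}
\end{cases}
\]
and $\rho(\cN(F)) > 1$ if and only if $|\tr(\cN(F))| > 2$, because the parabolic non-identity case is precisely when $\cN(F)$ has a nontrivial Jordan block at eigenvalue $\pm 1$, whereas in the elliptic case the eigenvalues are non-real of modulus $1$.

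By \cite[Section 3.2]{Kikuta}, $h^\cat(F) = \log\rho(\cN(F))$ for every autoequivalence $F$ of $\cD^b(E)$, which already settles the entropy portions of (i), (ii), and (iii). With this Gromov--Yomdin equality in hand, Proposition \ref{prop:GYLowerBoundPolEntropy} yields the lower bound $\ph(F) \geq s(\cN(F))$ in all three cases. The main remaining task is the matching upper bound $\ph(F) \leq s(\cN(F))$. I would establish it via a Bridgeland stability condition $\sigma$ on $\cD^b(E)$ (in the classical stability manifold determined by Bridgeland), for which every object admits a finite Harder--Narasimhan filtration into semistable objects and every semistable object decomposes as a direct sum of stables. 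For a split generator $G$, a Jordan-form analysis of $\cN(F)$ shows that the mass $m_\sigma(F^n(G))$ grows like $n^{s(\cN(F))} \, \rho(\cN(F))^n$ up to bounded factors. Since the mass controls $\delta_0$ up to a polynomially bounded multiplicative error (cf.\ Section \ref{sec:MassGrowth}), this gives the desired $\ph(F) \leq s(\cN(F))$.

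The main obstacle is the upper bound in the parabolic case, where we need \emph{exactly} linear (and not super-linear) growth of $\delta_0(G, F^n(G))$. This is where the geometry of genus one is essential: Atiyah's classification of indecomposable sheaves on $E$, combined with the particularly clean semistable structure on an elliptic curve, gives tight control on how $F^n(G)$ decomposes into stable factors whose numerical classes are governed by $\cN(F^n)$. In the hyperbolic case, by contrast, one must verify that no spurious polynomial factor appears on top of $\rho(\cN(F))^n$, which is a consequence of $\cN(F)$ being diagonalizable with two simple real eigenvalues. A parallel argument should simultaneously verify the mass-growth analogue $\ph_{\sigma,0}(F) = s(\cN(F))$ if one wishes to extend the trichotomy to the stability-condition setting.
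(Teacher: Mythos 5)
Your lower-bound step is sound and parallels a different mechanism from the paper: you invoke Proposition~\ref{prop:GYLowerBoundPolEntropy} together with Kikuta's Gromov--Yomdin equality $h^\cat(F)=\log\rho(\cN(F))$, whereas the paper instead proves Lemma~\ref{lemma:ellcurvethroughSL2} (that $\ph$ factors through $\phi\colon\Aut(\cD^b(E))\to\mathrm{SL}(2,\bZ)$) and then computes $\ph$ explicitly on a conjugacy class representative in each case, using Proposition~\ref{prop:StarndardAutCurve} for parabolic elements and the explicit formula $\chi(G\otimes F^nG)=A\rho^n+B\rho^{-n}$ from Kikuta's argument for hyperbolic ones. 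Your linear-algebra computation of $s(\cN(F))$ across the three cases is correct.

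However, your upper bound has a genuine gap, and in fact it hinges on an inequality that the paper establishes in the \emph{opposite} direction. You write that ``the mass controls $\delta_0$ up to a polynomially bounded multiplicative error (cf.\ Section~\ref{sec:MassGrowth}),'' i.e.\ $\delta_0(G,F^nG)\le (\text{poly in }n)\cdot m_\sigma(F^nG)$. But Remark~\ref{rmk:PropOfMass}\,(ii) and Lemma~\ref{lemma:ComparingEntropyMass} give only $m_{\sigma,t}(F^nG)\le m_{\sigma,t}(G)\,\delta_t(G,F^nG)$, hence $\ph_\sigma(F)\le\ph(F)$; the reverse bound is what you need and it is not available here. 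The paper does get a two-way bound in Lemma~\ref{lemma:algstabilitycond}, but that result requires an \emph{algebraic} stability condition (finite-length heart with finitely many simples), and no such stability condition exists on $\cD^b(E)$: the hearts in Bridgeland's component are all tilts of $\Coh(E)$ and are never of finite length. So the inequality you are citing does not apply, and the ``mass $\Rightarrow$ complexity'' bound would have to be proved from scratch for elliptic curves. Your sketch pointing to Atiyah's classification is a plausible direction, but it is not an argument: you would need to show concretely that for $F^n(G)$ the number of extension steps needed to build it from $G$ (which is what $\delta_0$ measures) is polynomially comparable to the total central charge of its HN factors, and this is exactly the delicate point the paper sidesteps by factoring through $\mathrm{SL}(2,\bZ)$ and computing on explicit representatives. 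Until that bound is supplied, the upper half of the trichotomy, particularly the claim $\ph(F)\le 1$ in the parabolic case and $\ph(F)\le 0$ in the hyperbolic case, is not established.
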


A second instance of such categorical trichotomy is proved for the 3-Calabi--Yau category associated to the $A_2$-quiver, by looking at the group of autoequivalences generated by the two natural spherical twists, see Section~\ref{sec:sphericaltwistquiver} for the precise statement.

\subsection{Other examples}
There are many examples of autoequivalences of triangulated categories that give natural non-trivial dynamical systems with zero categorical entropy.
Such examples include tensoring line bundles on smooth projective varieties, spherical twists along spherical objects (analogue of Dehn twists along Lagrangian spheres, via Homological Mirror Symmetry) \cite{SeidelThomas}, and $\bP$-twists along $\bP$-objects (analogue of Dehn twists along Lagrangian complex projective space) \cite{HuybrechtsThomas}.
We show in Section~\ref{sec: Examples} that the categorical polynomial entropy provides a non-trivial invariant for these autoequivalences.
For example, the categorical polynomial entropy of the autoequivalence of tensoring a line bundle in the derived category of a smooth projective variety encodes the \textit{positivity} property of the line bundle.

\begin{thm}[see Proposition \ref{prop:TensorLineBundle}, Theorem \ref{thm:TensorNefLineBundle} and Corollary \ref{cor:NumericalNature}]
Let $L$ be a line bundle on a smooth projective variety $X$ defined over an algebraically closed field. The categorical polynomial entropy of the functor $-\otimes L$ on $\cD^b(X)$ is a constant function whose value depends only on the numerical class of $L$, and satisfies
\[\nu(L)\leq \ph(-\otimes L) \leq \dim(X),\]
where $\nu(L)\coloneqq \max\{m\mid c_1(L)^m\not\equiv 0\}$ is the numerical dimension of $L$.\\
Moreover, if $L$ or $L^{-1}$ is nef, then  $$\ph(-\otimes L)=\nu(L).$$
\end{thm}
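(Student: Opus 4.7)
The plan is to marry the Yomdin-type lower bound quoted in the excerpt with an explicit Beilinson/Koszul-style resolution for the upper bound. Set $F = -\otimes L$ and $d = \dim X$.

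\textbf{Constancy in $t$ and reduction.} Since $F^n G = G\otimes L^n$ introduces no shifts, any filtration of $G\otimes L^n$ by cones on shifts of $G$ is dictated by cohomological data of $L^n$ alone, independent of $t$. So the exponent of polynomial growth of $\delta_t(G, G\otimes L^n)$ in $n$ does not depend on $t$, and $\ph_t(F)$ is a constant function. Numerical invariance is then obtained either directly (since the bounds below depend only on numerical data) or by combining the inequality $\ph(F_1\circ F_2)\leq \ph(F_1)+\ph(F_2)$ for commuting zero-entropy functors with the computation $\ph(-\otimes M)=0$ for $M$ numerically trivial (which reduces to checking that the resolution-based upper bound is $O(1)$ on $\operatorname{Pic}^0$-twists).

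\textbf{Lower bound $\ph(F)\geq \nu(L)$.} The endomorphism $\cN(F)$ of the numerical Grothendieck group is multiplication by $[L]$; under the Chern character isomorphism this becomes multiplication by $\exp(c_1(L))$ on the graded ring $\bigoplus_k\cN^k(X)_\bQ$. Hence $\cN(F) = \exp(N)$ where $N$ is multiplication by $c_1(L)$, in particular $\cN(F)$ is unipotent and $\rho(\cN(F))=1$. Combined with the upper-bound half below, which forces $h^\cat(F)=0$, this places us in the equality case of the Yomdin-type lower bound from the excerpt, giving
\[
\ph(F)\;\geq\; s(\cN(F))\;=\;s(N),
\]
where the last identity uses that $\exp$ preserves Jordan block sizes of nilpotent operators. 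Finally $N^k$ is multiplication by $c_1(L)^k$, which is nonzero as a linear map iff $c_1(L)^k\neq 0$ in $\cN^*(X)_\bQ$ (apply to the class of $\cO_X$), so the nilpotency index of $N$ is $\nu(L)+1$ and $s(N)=\nu(L)$.

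\textbf{Upper bound $\ph(F)\leq \dim X$.} Fix a very ample divisor $H$ and take $G_0 := \bigoplus_{i=0}^{d}\cO_X(-iH)$ as split generator. By standard Orlov/Beilinson-type arguments, any coherent sheaf admits a resolution of length $\leq d$ by direct sums of the summands of $G_0$ whose ranks are polynomial expressions in the Euler characteristics $\chi(-\otimes \cO(jH))$. Applied to $\cF=L^n$, Riemann--Roch gives $\chi(L^n\otimes\cO(jH))=O(n^d)$, yielding a filtration of $G_0\otimes L^n$ by $O(n^d)$ shifts of $G_0$. Hence $\delta_t(G_0, G_0\otimes L^n)\leq C(t)\cdot n^d$, which both shows $h^\cat(F)=0$ (subexponential growth) and gives $\ph(F)\leq d$.

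\textbf{Nef case and main obstacle.} When $L$ is nef, one refines the above by using that $c_1(L)^k\cdot H^{d-k}=0$ for $k>\nu(L)$, combined with Fujita-type vanishing of higher cohomology of $L^n\otimes\cO(jH)$ to replace $\chi$ by $h^0$; an intersection-theoretic computation then gives $h^0(L^n\otimes\cO(jH))=O(n^{\nu(L)})$, and plugging this into the resolution argument yields $\ph(F)\leq \nu(L)$, matching the lower bound. The case $L^{-1}$ nef is handled symmetrically, by running the argument with $L^{-n}$ in place of $L^n$ and using $\nu(L)=\nu(L^{-1})$. The principal obstacle is precisely this refinement in the nef case: the naive Beilinson resolution is insensitive to the vanishing of high intersection numbers and only yields the dimensional bound $d$. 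Upgrading the resolution to witness the correct exponent $\nu(L)$ requires an additional geometric input, most naturally either an Iitaka-fibration reduction when $L$ is semi-ample (passing to an ample line bundle on a base of dimension $\nu(L)$), or a careful induction via general hyperplane sections transverse to the ``null directions'' of $c_1(L)$.
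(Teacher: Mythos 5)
Your overall architecture mirrors the paper's: Yomdin-type lower bound via $\cN(F)=\exp(c_1(L))$ on the numerical Grothendieck group, a cohomological upper bound $O(n^d)$, and numerical invariance via the composition lemma applied to a numerically trivial twist. The lower-bound argument is sound and matches the paper's essentially verbatim. The $O(n^d)$ upper bound via a Beilinson-type resolution is a cosmetic variant of the paper's direct $\operatorname{Ext}$-distance computation (they are comparable by Remark~\ref{rmk:PropOfComplexity}(i)), so that part is fine. However, there are two genuine gaps.

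First, the constancy in $t$ is not actually proved. The assertion that ``any filtration of $G\otimes L^n$ by cones on shifts of $G$ is dictated by cohomological data of $L^n$ alone, independent of $t$'' does not follow from $F$ introducing no shifts: $\delta_t$ weights the shifts in a filtration by $e^{n_kt}$, and the infimum over filtrations may be realized differently for different $t$. What one actually needs is a uniform bound on the cohomological amplitude, i.e.\ that $\operatorname{Ext}^k(G,F^n G)=0$ for $|k|\geq M$ with $M$ independent of $n$, from which $|\log\epsilon_t-\log\epsilon_0|\leq M|t|$; this is exactly Lemma~\ref{lemm:ConstantFunction}, which follows because $-\otimes L$ preserves the standard $t$-structure. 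Your sketch omits this, and without it the reduction to $t=0$ is unjustified.

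Second, and more seriously, the nef case — which is the substance of the ``Moreover'' and also what powers the numerical-invariance argument through the numerically trivial twist — is not proved. You explicitly flag it as ``the principal obstacle'' and offer two possible routes, one of which (Iitaka-fibration reduction) does not work in the required generality: nef does not imply semi-ample, and the abundance conjecture is open, so one cannot pass to an ample line bundle on a base of dimension $\nu(L)$. The other route you name (induction via general hyperplane sections) is in fact what the paper does in Proposition~\ref{prop:GrowthNefDivisor}: one chooses, using Fujita's vanishing theorem, a very ample $H$ so that $H^k(X,\mathcal{F}\otimes\cO(H)\otimes L^{\otimes n})=0$ for $k>0$ and all $n\geq 0$; then for $k>0$ the dimension of $H^k(X,\mathcal{F}\otimes L^{\otimes n})$ is controlled by $H^{k-1}$ on a smooth general member of $|H|$, handled by induction on $\dim X$ using $\nu(L|_H)\leq\nu(L)$, and for $k=0$ one bounds $h^0(X,\mathcal{F}\otimes L^{\otimes n})\leq\chi(X,\mathcal{F}\otimes\cO(H)\otimes L^{\otimes n})$, which Hirzebruch--Riemann--Roch shows is a polynomial of degree $\leq\nu(L)$ in $n$ because $c_1(L)^i\equiv 0$ for $i>\nu(L)$. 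You would need to carry this induction out; without it, the upper bound you actually have stops at $\dim X$, and the theorem's final equality is unproven. (Also, minor: the numerically trivial twists in the invariance argument live in $\Pic^\tau$, not $\Pic^0$.)
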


We refer to Sections \ref{sec:sphericaltwists}, \ref{sec:sphericaltwistquiver}, and \ref{sec:P-twists} for results on the categorical polynomial entropy of spherical twists and $\bP$-twists.

Finally, we observe the following Gromov--Yomdin-type equality in the curve case, which is reminiscent of \cite{Kikuta}.
\begin{thm}[See Proposition \ref{prop:StarndardAutCurve} and Theorem \ref{thm:ellcurveclassify}]
Let $C$ be a smooth projective curve defined over an algebraically closed field. Let  $F$ be any autoequivalence of the bounded derived category $\cD^b(C)$. Then the categorical polynomial entropy of $F$ is equal to the polynomial growth of the induced action on the (2-dimensional) cohomology/numerical Grothendieck group:
\[
h^\pol(F) = s (\cN(F))\in \{0, 1\}.
\]
\end{thm}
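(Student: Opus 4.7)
The plan is to split the argument according to the genus $g = g(C)$ of the curve, using two complementary inputs. For $g \neq 1$, the canonical sheaf $\omega_C$ is ample if $g \geq 2$ and anti-ample if $g = 0$, so the Bondal--Orlov reconstruction theorem gives a complete description of $\Aut(\cD^b(C))$, reducing the statement to the already-handled case of standard autoequivalences. For $g = 1$, I would invoke the classification already proved in Theorem~\ref{thm:ellcurveclassify} and simply match its conclusions with $s(\cN(F))$ case by case.

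For $g \neq 1$, by Bondal--Orlov every autoequivalence $F$ of $\cD^b(C)$ is isomorphic to a composition $f^* \circ (- \otimes L) \circ [n]$ for some $n \in \bZ$, $f \in \Aut(C)$ and $L \in \Pic(C)$. Such $F$ is then a standard autoequivalence in the sense of Proposition~\ref{prop:StarndardAutCurve}, which already yields $h^\pol(F) = s(\cN(F))$. To verify that this common value lies in $\{0,1\}$, I would compute the induced action on $\cN(\cD^b(C)) \cong \bZ^{\oplus 2}$ in the basis $([\cO_C], [\cO_p])$: since pullback by an automorphism acts trivially on numerical classes, one obtains
\[
\cN(F) = (-1)^n \begin{pmatrix} 1 & 0 \\ \deg L & 1 \end{pmatrix},
\]
which equals $\pm \mathrm{id}$ when $\deg L = 0$ (so $s = 0$), and is a non-trivial unipotent up to sign otherwise (so $s = 1$).

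For $g = 1$, I would read off $h^\pol(F)$ from Theorem~\ref{thm:ellcurveclassify} and verify the equality with $s(\cN(F))$ in each of the three subcases. Note that $\cN(F) \in \mathrm{SL}_2(\bZ)$, because any autoequivalence preserves the Euler form, which on an elliptic curve is the standard symplectic form on $\bZ^{\oplus 2}$. In case~(i), $\cN(F)$ is either $\pm\mathrm{id}$ or has complex-conjugate eigenvalues of modulus one, so its Jordan form is diagonal and $s(\cN(F)) = 0 = h^\pol(F)$. In case~(ii), $\cN(F)$ is a non-trivial unipotent up to sign, with a single Jordan block of size two and eigenvalue $\pm 1$, giving $s(\cN(F)) = 1 = h^\pol(F)$. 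In case~(iii), $\cN(F)$ has two distinct real eigenvalues $\lambda, 1/\lambda$ with $|\lambda| > 1$, hence is diagonalizable and the unique eigenvalue of maximal absolute value has a $1 \times 1$ Jordan block, yielding $s(\cN(F)) = 0 = h^\pol(F)$.

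The substantive work is already contained in Proposition~\ref{prop:StarndardAutCurve} and Theorem~\ref{thm:ellcurveclassify}; the only remaining task is the case-analysis matching $h^\pol(F)$ with the Jordan-form invariant $s(\cN(F))$, which is a short linear-algebra check. The mildest subtle point is to observe that the normalization $\det \cN(F) = 1$ (forced by preservation of the Euler form when $g = 1$, and automatic for standard autoequivalences when $g \neq 1$) is what pins the Jordan structure down to the explicit form used above.
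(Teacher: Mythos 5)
Your proof is correct and takes essentially the same route as the paper: for $g\neq 1$ invoke Bondal--Orlov so that every autoequivalence is standard and Proposition~\ref{prop:StarndardAutCurve} applies, and for $g=1$ read off the equality $h^\pol(F)=s(\cN(F))$ from the case analysis in Theorem~\ref{thm:ellcurveclassify}. The explicit matrix $\cN(F)=(-1)^n\begin{pmatrix}1&0\\\deg L&1\end{pmatrix}$ and the Jordan-form checks in the three elliptic/parabolic/hyperbolic cases are accurate and match what the cited results already supply.
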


%\noindent\textbf{Organization:}

\noindent\textbf{Convention:} $\k$ is a base field. In this paper, all categories, as well as functors between them, are assumed to be $\k$-linear. Functors between triangulated categories are always assumed to be triangulated (i.e.~they preserve distinguished triangles) and not \textit{virtually} zero (i.e.~no iteration is the zero functor).  A triangulated category is called \textit{saturated} if it is equivalent to the homotopy category of a triangulated saturated $A_\infty$-category; or equivalently, it admits a dg-enhancement which is triangulated, smooth and proper. Note that the notion of  saturatedness here is stronger than that of Bondal--Kapranov \cite{BondalKapranov}.\\

\noindent\textbf{Acknowledgement:} We would like to thank Serge Cantat, Zhi Jiang, John Lesieutre, Olga Paris-Romaskevich, Steffen Sagave, and Junyi Xie for helpful discussions. Lie Fu and Genki Ouchi also want to thank the fourth edition of \textit{Japanese-European Symposium on Symplectic Varieties and Moduli Spaces} in Z\"urich 2019, where the initial ideas of this project took form. Lie Fu is supported by the Radboud Excellence Initiative program. Genki Ouchi is supported by Interdisciplinary Theoretical and Mathematical Sciences Program (iTHEMS) in RIKEN and JSPS KAKENHI Grant number 19K14520.

\section{Categorical polynomial entropy}
\label{section:catpolyentropy}
In this section, inspired by the categorical formalism from \cite{DHKK} and geometrical considerations from \cite{CanPR}, we develop the basic theory of categorical polynomial entropy of an endofunctor of a triangulated category, as a secondary invariant of a categorical dynamical system, refining the categorical entropy.
\subsection{Basic notions}
\label{section:basicnotions}
\begin{defn}[Complexity function {\cite[Definition 2.1]{DHKK}}] 
\label{def:Complexity}
 Let $\cD$ be a triangulated category. Given any two objects $M, N\in \cD$, we define the following functions in a real variable $t$, which take values in $\mathbb{R}_{\geq 0}\cup \{\infty\}$:
\begin{itemize}
    \item the \textit{complexity function} of $N$ with respect to $M$:
$$\delta_t(M,N)\coloneqq\inf\left\{\sum_{k=1}^m e^{n_kt}~\mid \substack{0=A_0\to A_1\to \dots \to A_m=N\oplus N',\\
\text{for some } N'\in \cD,\\ \operatorname{cone}(A_{k-1}\to A_k)\simeq M[n_k], \forall k\in \mathbb{Z}} \right\}.$$
By convention, $\delta_t(M, N)=0$ if $N\simeq 0$ and $\delta_t(M,N)=\infty$ if and only if $N$ is not in the thick triangulated subcategory generated by $M$.
\item The $\operatorname{Ext}$-\textit{distance function} from $M$ to $N$: $$\epsilon_t(M, N)\coloneqq\delta_t(\k, \RHom(M, N))=\sum_{k\in \mathbb{Z}} \dim_\k \Hom(M, N[k])\cdot e^{-kt}.$$
\item Their values at $t=0$ are of special interest and we denote
$$\delta(M, N)\coloneqq\delta_0(M, N);\quad \epsilon(M, N)\coloneqq\epsilon_0(M, N),$$
which take values in $[1, \infty]$ when $N\not\simeq0$.
\end{itemize} 
\end{defn}

\begin{rmk}\label{rmk:PropOfComplexity}
We collect some basic properties of the functions in Definition \ref{def:Complexity}. See \cite[Proposition 2.3, Theorem 2.7]{DHKK} for details.
\begin{enumerate}[label=(\roman*)]
    \item When $\cD$ is saturated, the complexity function and the $\operatorname{Ext}$-distance function are controlled by each other: there exist functions $C_1, C_2\colon \mathbb{R}\to \mathbb{R}_{>0}$, such that for any $M, N\in \cD$, we have
    $$C_1(t)\epsilon_t(M, N)\leq \delta_t(M, N)\leq C_2(t)\epsilon_t(M, N).$$
    \item They both satisfy the \textit{subadditivity} condition: for any distinguished triangle $N'\to N\to N''\xrightarrow{+1}$, we have
    $$\delta_t(M, N)\leq \delta_t(M, N')+\delta_t(M, N''); $$
    $$\epsilon_t(M, N)\leq \epsilon_t(M, N')+\epsilon_t(M, N'').$$
    \item $\delta_t$ satisfies the (multiplicative) \textit{triangle inequality}:
    $$\delta_t(M_1, M_3)\leq\delta_t(M_1,  M_2)\delta_t(M_2, M_3).$$
    \item $\delta_t$ retracts after applying a functor: for any triangulated functor $F\colon \cD\to \cD'$, we have
    $$\delta_t(M, N)\geq \delta_t(F(M), F(N)).$$
    \item If $\cD$ admits a Serre functor $\mathbf{S}$, then by Serre duality, $$\epsilon_t(M, N)=\epsilon_{-t}(N, \mathbf{S}(M)).$$
\end{enumerate}
\end{rmk}

Using the complexity functions, Dimitrov--Haiden--Katzarkov--Kontsevich \cite{DHKK} defined a categorical analogue of the notion of topological entropy, in order to measure the complexity of an endofunctor, viewed as a \textit{categorical} dynamical system. Let us recall the definition.

\begin{defn}[Categorical entropy]
\label{def:CateEntropy}
Let $\cD$ be a triangulated category with a given split generator\footnote{An object of a triangulated category is called a \textit{split generator} if the smallest thick triangulated subcategory containing it is the whole category.} $G$. Let $F\colon\cD\ra\cD$ be a (triangulated, not virtually zero) endofunctor. The \emph{categorical entropy} of $F$ is defined to be the function
$h_t(F)\colon\bR\ra[-\infty,\infty)$ in the real variable $t$, given by
\[
h_t(F)\coloneqq \lim_{n\ra\infty} \frac{\log\delta_t(G,F^n(G))}{n}.
\]
The existence of the limit is proved in \cite[Lemma 2.6]{DHKK}. As the notation suggests, $h_t(F)$ is independent of the choice of the generator \cite[Lemma 2.6]{DHKK}.
We denote
\[
h^\cat(F)\coloneqq h_0(F) \in [0, \infty).
\]
\end{defn}

Inspired by \cite{CanPR}, even in the case of ``slow" categorical dynamical systems, namely, when the categorical entropy vanishes, it is interesting to understand its complexity. To this end, we propose the following notion as a secondary invariant, which makes sense even when the categorical entropy does not vanish.
\begin{defn}[Categorical polynomial entropy]
\label{def:CatPolEntropy}
In the same setting as in Definition \ref{def:CateEntropy}, we define
the \emph{polynomial entropy} of $F$ to be the function $\ph_t(F)$ in the real variable $t$ given by
\[
\ph_t(F)\coloneqq \limsup_{n\ra\infty} \frac{\log\delta_t(G,F^n(G))-nh_t(F)}{\log (n)},
\]
where $h_t(F)$ is the categorical entropy of $F$.
It is well-defined for any $t\in\bR$ such that $h_t(F)\neq-\infty$.
In particular, it is well-defined at $t=0$ since $h_0(F)\geq0$.
We denote $\ph(F):=\ph_0(F)$.
\end{defn}

\begin{rmk}[Lower polynomial entropy]
It also makes sense to use $\liminf$ instead of $\limsup$ in Definition \ref{def:CatPolEntropy}. The function thus obtained could be called the \textit{lower polynomial entropy} of $F$, denoted by $\underline{h}^\pol_t(F)$. We do not know whether $\ph_t(F)$ and $\underline{h}^\pol_t(F)$ coincide in general. In this paper,  we will mostly focus on the study of $\ph_t$.
\end{rmk}

\subsection{Basic properties}
We show some basic properties of the categorical polynomial entropy function, in a parallel way to the analogous properties of the categorical entropy, as developed in \cite{DHKK}, \cite{Kikuta}, \cite{KiTa}. 
\begin{lemma}
\label{lem:TwoGenerators}
The definition of the categorical polynomial entropy is independent of the choice of the split generator. Moreover, for any two split generators $G, G'$ of $\cD$, we have
$$\ph_t(F) = \limsup_{n\ra\infty} \frac{\log\delta_t(G,F^n(G'))-nh_t(F)}{\log (n)}.
$$
\end{lemma}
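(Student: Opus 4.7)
The plan is to reduce everything to the multiplicative triangle inequality and the retraction property of $\delta_t$ (Remark~\ref{rmk:PropOfComplexity}(iii) and (iv)), which together force $\log\delta_t(G,F^n(G'))$ and $\log\delta_t(G,F^n(G))$ to differ only by an additive constant independent of $n$. After dividing by $\log n$, any such $O(1)$ discrepancy disappears, so the two limsups agree. The same idea, applied twice, also gives independence of the split generator.

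First I would record the two inequalities. Since $G$ and $G'$ are both split generators, the quantities $\delta_t(G,G')$ and $\delta_t(G',G)$ are finite. Applying the triangle inequality to the factorization $G\to F^n(G')\to F^n(G)$ and using $\delta_t(F^n(G'),F^n(G))\leq \delta_t(G',G)$ (retraction under $F^n$), I get
\[
\delta_t(G,F^n(G))\;\leq\; \delta_t(G,F^n(G'))\cdot \delta_t(G',G).
\]
Symmetrically, factoring through $F^n(G)$ and using retraction gives
\[
\delta_t(G,F^n(G'))\;\leq\; \delta_t(G,F^n(G))\cdot \delta_t(G,G').
\]
Taking logarithms yields
\[
\bigl|\log\delta_t(G,F^n(G'))-\log\delta_t(G,F^n(G))\bigr|\;\leq\; \max\bigl(\log\delta_t(G,G'),\,\log\delta_t(G',G)\bigr),
\]
a bound that is uniform in $n$. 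Consequently
\[
\frac{\log\delta_t(G,F^n(G'))-nh_t(F)}{\log n}\;-\;\frac{\log\delta_t(G,F^n(G))-nh_t(F)}{\log n}\;\longrightarrow\;0
\]
as $n\to\infty$, so passing to $\limsup$ gives the claimed equality.

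For the first assertion, that $\ph_t(F)$ is independent of the split generator, I repeat the same argument but now on both sides. Applying the triangle inequality twice, via $G'\to G\to F^n(G)\to F^n(G')$ and using retraction for the last step,
\[
\delta_t(G',F^n(G'))\;\leq\;\delta_t(G',G)\cdot \delta_t(G,F^n(G))\cdot \delta_t(G,G'),
\]
with the reverse inequality obtained by swapping $G$ and $G'$. Hence $\log\delta_t(G',F^n(G'))$ and $\log\delta_t(G,F^n(G))$ differ by a constant that does not depend on $n$, so the corresponding $\limsup$ defining $\ph_t(F)$ is the same for $G$ and for $G'$.

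There is really no essential obstacle: the argument is entirely parallel to the independence proof for categorical entropy in \cite[Lemma~2.6]{DHKK}, with the only change being that one divides by $\log n$ instead of $n$, so additive $O(1)$ errors still vanish in the limit. The one point worth checking carefully is that the subtracted term $n h_t(F)$ is identical on both sides of the comparison and therefore cancels, so the finiteness of $h_t(F)$ (which is part of the hypotheses in Definition~\ref{def:CatPolEntropy}) is all that is needed for the manipulations to be legal.
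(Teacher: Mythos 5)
Your argument is correct and is exactly the one the paper has in mind: it invokes the multiplicative triangle inequality and the retraction property of $\delta_t$ (Remark~\ref{rmk:PropOfComplexity}(iii),(iv)) to bound the discrepancy in $\log\delta_t$ by an $n$-independent constant, which then vanishes after dividing by $\log n$, precisely as in the DHKK proof of invariance of $h_t(F)$. No gaps; the observation that $n\,h_t(F)$ cancels because $h_t(F)$ is already known to be generator-independent is the right point to flag.
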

\begin{proof}
The proof is similar to \cite[Lemma~2.6]{DHKK} by using the triangle inequality and the retraction property of $\delta_t$, recalled in Remark \ref{rmk:PropOfComplexity} (iii), (iv).
\end{proof}

With the saturatedness condition on the category, one can use the $\operatorname{Ext}$-distance function to compute the categorical polynomial entropy.
\begin{lemma}\label{lemm:EntropyInExt-distance}
Assume that $\cD$ is saturated. For any split generators $G, G'$ of $\cD$, we have that
\[
\ph_t(F) = \limsup_{n\ra\infty} \frac{\log\epsilon_t(G,F^n(G'))-nh_t(F)}{\log (n)}
\]
\end{lemma}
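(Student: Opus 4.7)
The plan is to reduce the identity to the sandwich inequality between $\delta_t$ and $\epsilon_t$ that holds in saturated categories, together with the freedom to use any pair of split generators granted by Lemma~\ref{lem:TwoGenerators}.

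First I would invoke Remark~\ref{rmk:PropOfComplexity}(i): since $\cD$ is saturated, there exist positive constants $C_1(t), C_2(t)$ (depending only on $t$, not on $n$) such that for every pair of objects $M,N$,
\[
C_1(t)\,\epsilon_t(M,N)\;\leq\;\delta_t(M,N)\;\leq\;C_2(t)\,\epsilon_t(M,N).
\]
Applying this with $M=G$ and $N=F^n(G')$, taking logarithms, and subtracting $n h_t(F)$, we obtain the chain of inequalities
\[
\frac{\log C_1(t)+\log\epsilon_t(G,F^n(G'))-n h_t(F)}{\log n}
\;\leq\;\frac{\log\delta_t(G,F^n(G'))-n h_t(F)}{\log n}
\;\leq\;\frac{\log C_2(t)+\log\epsilon_t(G,F^n(G'))-n h_t(F)}{\log n}.
\]

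Next, I would take $\limsup_{n\to\infty}$ across all three terms. The additive constants $\log C_i(t)$ are swallowed since $\log C_i(t)/\log n\to 0$ as $n\to\infty$. Combining with Lemma~\ref{lem:TwoGenerators}, which guarantees that the middle quantity has the $\limsup$ value $\ph_t(F)$ regardless of whether one uses $G$ or $G'$ on the right, we conclude that the upper and lower bounds both have $\limsup$ equal to $\ph_t(F)$, which forces the middle $\epsilon_t$-version to have the same $\limsup$.

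There is no real obstacle here: the proof is essentially the observation that multiplicative constants disappear under $\log(\cdot)/\log n$. The only thing to be careful about is to ensure that $h_t(F)\neq-\infty$ so that subtracting $n h_t(F)$ is meaningful, which is part of the standing assumption for $\ph_t(F)$ to be defined, and to use the saturatedness hypothesis (rather than Bondal--Kapranov saturatedness) precisely at the step where Remark~\ref{rmk:PropOfComplexity}(i) is invoked.
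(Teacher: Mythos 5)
Your argument matches the paper's proof, which cites exactly Remark~\ref{rmk:PropOfComplexity}(i) and Lemma~\ref{lem:TwoGenerators}: the sandwich $C_1(t)\epsilon_t\leq\delta_t\leq C_2(t)\epsilon_t$ makes the two $\limsup$'s agree because the multiplicative constants become additive $\log C_i(t)$ terms that are killed by dividing by $\log n$, and Lemma~\ref{lem:TwoGenerators} lets you put any split generator $G'$ inside $F^n(-)$. This is correct and the same route the authors take.
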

\begin{proof}
As in \cite[Theorem~2.7]{DHKK}, one uses Remark \ref{rmk:PropOfComplexity} (i) and Lemma \ref{lem:TwoGenerators}.
\end{proof}

%\begin{lemma}\label{lemm:CommutingFunctors}
%Let $F_1$ and $F_2$ be two triangulated  endofunctors of a triangulated category $\cD$. If $F_1\circ F_2=F_2\circ F_1$, then 
%\[\ph_t(F_1\circ F_2)\leq \ph_t(F_1)+\ph_t(F_2).\]
%\end{lemma}
%\begin{proof}
%Take a split generator $G$. Using Remark \ref{rmk:PropOfComplexity} $(iii), (iv)$, we see that 
%$$\delta_t(G, (F_1\circ F_2)^n(G))=\delta_t(G, F_1^n\circ F_2^n(G))\leq \delta_t(G, F_1^n(G))\delta_t(G, F_2^n(G)).$$ The desired inequality follows by taking logarithmic, dividing by $\log (n)$, and passing to the limit.
%\end{proof}

%\begin{rmk}
%In general, the polynomial entropy of the composed functor $F_1\circ F_2$ are not directly related to their individual polynomial entropy  $\ph(F_1)$ and $\ph(F_2)$.
%\end{rmk}

\begin{lemma}\label{lemm:ConstantFunction}
Notation is as before. Assume that $\cD$ is saturated and there is a split generator $G$ of $\cD$ and an integer $M\geq 0$ such that for any $|k|\geq M$ and any $n\geq 0$, we have
\[\operatorname{Ext}^k(G, F^n(G))=0,\]
(for example, when $F$ preserves a bounded t-structure of finite cohomological dimension),
then $\ph_t(F)$ is  a constant function in $t$.
\end{lemma}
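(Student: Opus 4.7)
The plan is to invoke Lemma \ref{lemm:EntropyInExt-distance} to rephrase $\ph_t(F)$ in terms of the $\operatorname{Ext}$-distance function and then use the uniform vanishing hypothesis to compare $\epsilon_t$ with $\epsilon_0$ up to a factor that is bounded independently of $n$. More precisely, the hypothesis says that
\[
\epsilon_t(G, F^n(G)) = \sum_{|k|<M} \dim_\k\operatorname{Hom}(G, F^n(G)[k])\cdot e^{-kt}
\]
is a finite sum whose number of nonzero terms is bounded uniformly in $n$. Since for each index $k$ with $|k|<M$ one has $e^{-M|t|}\leq e^{-kt}\leq e^{M|t|}$, comparing term by term against the value at $t=0$ yields
\[
e^{-M|t|}\,\epsilon_0(G, F^n(G))\;\leq\;\epsilon_t(G, F^n(G))\;\leq\;e^{M|t|}\,\epsilon_0(G, F^n(G))
\]
for every $n\geq 0$, so that $\bigl|\log\epsilon_t(G,F^n(G))-\log\epsilon_0(G,F^n(G))\bigr|\leq M|t|$, a constant independent of $n$.

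Dividing this bound by $n$ and letting $n\to\infty$ in the formula $h_t(F)=\lim_n \frac{\log\epsilon_t(G,F^n(G))}{n}$ (which holds under saturatedness by Remark \ref{rmk:PropOfComplexity}\,(i) combined with the definition of $h_t$) shows that $h_t(F)=h_0(F)=h^{\cat}(F)$ is already a constant function of $t$. Dividing instead by $\log n$ and taking $\limsup$ in the expression from Lemma \ref{lemm:EntropyInExt-distance} gives
\[
\ph_t(F)=\limsup_{n\to\infty}\frac{\log\epsilon_0(G,F^n(G))+O(1)-n\,h^{\cat}(F)}{\log n}=\ph_0(F),
\]
since the $O(1)$ error term is killed by the $\log n$ denominator. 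This establishes the lemma.

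The argument itself has no real obstacle; the only minor point to justify is the parenthetical remark. If $F$ preserves a bounded t-structure with heart $\cA$ of finite cohomological dimension $d$, then $F^n$ is t-exact of cohomological dimension $\leq d$, and the cohomology objects $H^i(G)$ and $H^i(F^n(G))=F^n H^i(G)$ are supported in a fixed bounded range $|i|\leq N$ depending only on $G$. A standard hypercohomology spectral sequence then expresses $\operatorname{Ext}^k(G, F^n(G))$ in terms of $\operatorname{Ext}^j_\cA(H^i(G), H^{i'}(F^n(G)))$ with $0\leq j\leq d$ and $|i|,|i'|\leq N$, forcing $|k|\leq d+2N$ and yielding the required uniform vanishing with $M:=d+2N+1$.
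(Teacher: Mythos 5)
Your proof is correct and follows essentially the same route as the paper's: both hinge on the termwise bound $e^{-M|t|}\leq e^{-kt}\leq e^{M|t|}$ for $|k|<M$ to obtain $|\log\epsilon_t(G,F^n(G))-\log\epsilon_0(G,F^n(G))|\leq M|t|$ uniformly in $n$, use that $h_t(F)$ is constant, and then divide by $\log n$. The added justification of the parenthetical remark via the hypercohomology spectral sequence is a sensible elaboration but not a departure in method.
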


\begin{proof}
Similarly as in \cite[Lemma 2.11]{DHKK}, the vanishing hypothesis implies that 
$$|\log \epsilon_t(G,F^n(G))-\log \epsilon_0(G,F^n(G))|\leq M|t|.$$ As the categorical entropy function $h_t(F)$ is constant in $t$ (\cite[Lemma 2.11]{DHKK}), we obtain that
$$|(\log \epsilon_t(G,F^n(G))-n h_t(F))- (\log \epsilon_0(G,F^n(G))-n h_0(F))|\leq M|t|.$$
We can conclude by dividing by $\log(n)$ and taking limit.
\end{proof}

\begin{lemma}[Commutation and conjugation]\label{lemma:CommutationConjugation}
Let $\cD$ be a triangulated category and $F_1$, $F_2$ two endo-functors of $\cD$. Then 
$\ph_t(F_1F_2)=\ph_t(F_2F_1)$.\\
In particular,  if  $F_1$ is an autoequivalence, then $\ph_t(F_1F_2F_1^{-1})=\ph_{t}(F_2)$.
\end{lemma}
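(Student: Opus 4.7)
The plan is to mimic the well-known commutation property for the ordinary categorical entropy, and then to extract the polynomial refinement by dividing by $\log n$ instead of by $n$. Fix a split generator $G$ of $\cD$; by Lemma~\ref{lem:TwoGenerators}, $\ph_t(F_1 F_2)$ and $\ph_t(F_2 F_1)$ can be computed from the quantities $\delta_t(G, (F_1 F_2)^n(G))$ and $\delta_t(G, (F_2 F_1)^n(G))$ respectively, and $h_t(F_1 F_2)$ and $h_t(F_2 F_1)$ from the exponential rates of the same quantities. The whole argument rests only on the triangle inequality and the retraction property of $\delta_t$ recalled in Remark~\ref{rmk:PropOfComplexity}(iii)-(iv).

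The main identity I would use is the elementary factorization
\[
(F_1 F_2)^n \;=\; F_1 \circ (F_2 F_1)^{n-1} \circ F_2, \qquad n\geq 1.
\]
Applying the triangle inequality twice and the retraction property once (to strip the outer $F_1$), I would obtain a chain
\[
\delta_t(G, (F_1F_2)^n(G)) \;\leq\; \delta_t(G, F_1(G))\cdot \delta_t(G, F_2(G))\cdot \delta_t(G, (F_2F_1)^{n-1}(G)),
\]
where the first two factors are constants in $n$. Taking logarithms and dividing by $n$ already yields $h_t(F_1 F_2)\leq h_t(F_2 F_1)$; by symmetry the two categorical entropies coincide. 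Call this common value $h_t$.

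Now substituting $h_t(F_1F_2) = h_t(F_2F_1) = h_t$, the same inequality gives
\[
\log\delta_t(G, (F_1F_2)^n(G)) - n h_t \;\leq\; C_t + \bigl(\log\delta_t(G, (F_2F_1)^{n-1}(G)) - (n-1) h_t\bigr),
\]
where $C_t\coloneqq \log\delta_t(G,F_1(G))+\log\delta_t(G,F_2(G))-h_t$ is independent of $n$. Dividing by $\log n$ and using $\log(n-1)/\log n \to 1$, I obtain $\ph_t(F_1 F_2)\leq \ph_t(F_2 F_1)$. The reverse inequality follows by swapping the roles of $F_1$ and $F_2$.

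For the conjugation statement, assuming $F_1$ is an autoequivalence, I would apply the commutation identity just established to the pair $(F_1,\, F_2 F_1^{-1})$: on one side we get $F_1(F_2 F_1^{-1}) = F_1 F_2 F_1^{-1}$, and on the other side $(F_2 F_1^{-1})F_1 = F_2$, so $\ph_t(F_1 F_2 F_1^{-1}) = \ph_t(F_2)$. There is no real obstacle here; the only care required is bookkeeping the additive constants and the shift from $n$ to $n-1$ when passing to $\log n$, which is absorbed into a term tending to $0$.
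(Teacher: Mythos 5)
Your argument is correct and matches the paper's proof essentially verbatim: the paper invokes the same inequality $\delta_t(G, (F_1F_2)^n(G))\leq \delta_t(G, F_1(G))\,\delta_t(G, F_2(G))\,\delta_t(G, (F_2F_1)^{n-1}(G))$ (citing Kikuta for it and for $h_t(F_1F_2)=h_t(F_2F_1)$), divides by $\log n$, concludes by symmetry, and handles conjugation exactly as you do by applying commutation to the pair $(F_1, F_2F_1^{-1})$. One small bookkeeping slip: deriving that inequality from $(F_1F_2)^n=F_1(F_2F_1)^{n-1}F_2$ uses the retraction property \emph{twice} (once to strip the outer $F_1$ and once to strip $(F_2F_1)^{n-1}$ from the $F_2(G)$ term), not once, but the inequality itself and everything that follows are sound.
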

\begin{proof}
As shown in \cite[Lemma 2.8]{Kikuta}, $h_t(F_1F_2)=h_t(F_2F_1)$ and for any split generator $G$, 
\[\delta_t(G, (F_1F_2)^n(G))\leq \delta_t(G, F_1(G))\delta_t(G, F_2(G))\delta_t(G, (F_2F_1)^{n-1}(G)).\]
Hence 
\begin{align*}
\ph_t(F_1F_2)
&=\limsup_{n\to \infty}\frac{\log \delta_t(G, (F_1F_2)^n(G))-nh_t(F_1F_2)}{\log(n)}\\
&\leq\limsup_{n\to \infty}\frac{\log \delta_t(G, F_1(G))+\log\delta_t(G, F_2(G))+\log\delta_t(G, (F_2F_1)^{n-1}(G))-nh_t(F_2F_1)}{\log(n-1)}\\
&= \ph_t(F_2F_1).
\end{align*}
We get an equality by symmetry. The invariance by conjugation follows:
\begin{equation*}
    \ph_t(F_1F_2F_1^{-1})=\ph_t(F_2F_1^{-1}F_1)=\ph_t(F_2).\qedhere
\end{equation*}
\end{proof}

\begin{lemma}[Powers]\label{lemma:Power}
Notation is as before. For any positive integer $m$, we have $$\ph_t(F^m)\leq \ph_t(F).$$ Similarly, $\underline{h}^\pol_t(F^m)\geq \underline{h}^\pol_t(F).$
In particular, if the $\limsup$ in the definition of $\ph_t(F)$ is an actual limit, then we have equalities. 
\end{lemma}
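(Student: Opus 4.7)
The plan is to reduce everything to a straightforward subsequence argument. First, recall the standard multiplicative property of categorical entropy, $h_t(F^m) = m\, h_t(F)$, which follows directly from $(F^m)^n = F^{mn}$ and the existence of the limit in Definition~\ref{def:CateEntropy} (proved in \cite[Lemma~2.6]{DHKK}; any subsequence converges to the same value). Setting
\[
\phi_k := \log\delta_t(G, F^k(G)) - k\, h_t(F),
\]
we have $h^\pol_t(F) = \limsup_k \phi_k/\log k$ and, using $h_t(F^m)=m h_t(F)$,
\[
h^\pol_t(F^m) \;=\; \limsup_{n\to\infty}\frac{\log\delta_t(G, F^{mn}(G)) - mn\, h_t(F)}{\log n} \;=\; \limsup_{n\to\infty}\frac{\phi_{mn}}{\log n}.
\]

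The core comparison is then between the denominators $\log n$ and $\log(mn) = \log n + \log m$. I would show first that
\[
\limsup_{n\to\infty}\frac{\phi_{mn}}{\log n} \;=\; \limsup_{n\to\infty}\frac{\phi_{mn}}{\log(mn)},
\]
by writing $\phi_{mn}/\log n = (\phi_{mn}/\log(mn))\cdot(1 + \log m/\log n)$ and noting that the second factor tends to $1$. A short case split on whether the limsup is finite, $+\infty$, or $-\infty$ (in the finite case: for any $\varepsilon>0$ the bound $\phi_{mn}/\log(mn) \le L+\varepsilon$ holds eventually, and multiplying by $1+\log m/\log n$ gives a bound converging to $L+\varepsilon$) confirms this. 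Since $\{mn\}_{n\ge 1}$ is a subsequence of $\{k\}_{k\ge 1}$, we then get
\[
h^\pol_t(F^m) \;=\; \limsup_{n}\frac{\phi_{mn}}{\log(mn)} \;\le\; \limsup_{k}\frac{\phi_k}{\log k} \;=\; h^\pol_t(F).
\]
The statement for $\underline h^\pol_t$ is symmetric: the same rewriting gives $\underline h^\pol_t(F^m) = \liminf_n \phi_{mn}/\log(mn)$, and a liminf along a subsequence is $\ge$ the full liminf, yielding $\underline h^\pol_t(F^m) \ge \underline h^\pol_t(F)$.

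For the ``in particular" clause, suppose the limsup defining $h^\pol_t(F)$ is an actual limit, so $\underline h^\pol_t(F) = h^\pol_t(F)$. Combining the two inequalities just proved with the trivial $\underline h^\pol_t(F^m) \le h^\pol_t(F^m)$ gives
\[
h^\pol_t(F^m) \;\le\; h^\pol_t(F) \;=\; \underline h^\pol_t(F) \;\le\; \underline h^\pol_t(F^m) \;\le\; h^\pol_t(F^m),
\]
forcing equality throughout.

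The main subtle point is the first equality above: because $\phi_k$ need not be positive (it can be very negative if the complexity grows strictly slower than the exponential rate $h_t(F)$), one cannot simply invoke $\log(mn)\sim\log n$ without checking signs. The clean way around this is the case analysis sketched above, which works uniformly; alternatively, one can bound $|\phi_{mn}/\log n - \phi_{mn}/\log(mn)| = |\phi_{mn}|\log m / (\log n\cdot\log(mn))$ and observe it vanishes in the limit whenever the relevant $\limsup$/$\liminf$ is finite. Everything else is routine.
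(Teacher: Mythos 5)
Your proof is correct and follows essentially the same route as the paper's: rewrite $\ph_t(F^m)$ using $h_t(F^m)=m\,h_t(F)$, replace the denominator $\log n$ by $\log(mn)$, and then compare with the full $\limsup$/$\liminf$ via the subsequence $\{mn\}$. The paper performs the $\log n\leadsto\log(mn)$ swap without comment; you correctly identify this as the only step needing care (since the numerator $\phi_{mn}$ can be negative) and handle it via the factor $1+\log m/\log n\to1$, which is a legitimate filling-in of a small gap rather than a genuinely different argument.
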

\begin{proof}
By definition,
\[\ph_t(F^m)=\limsup_{n\to \infty}\frac{\log\delta_t(G,F^{mn}(G))-nh_t(F^m)}{\log (n)}.\]
Using the fact that $h_t(F^m)=mh_t(F)$, the right-hand side is nothing else but
\[\limsup_{n\to \infty}\frac{\log\delta_t(G,F^{mn}(G))-mnh_t(F)}{\log (nm)},\] which is less than or equal to $\ph_t(F)$. The proof for the lower polynomial entropy is similar. 
\end{proof}

\begin{lemma}[Inverse]\label{lemma:Inverse}
Let $\cD$ be a saturated triangulated category admitting a Serre functor. Then for any autoequivalence $F$ of $\cD$, we have
\[h_t(F^{-1})=h_{-t}(F); \quad \ph_t(F^{-1})=\ph_{-t}(F).\]
In particular, $h^\cat(F^{-1})=h^\cat(F)$ and $\ph(F^{-1})=\ph(F)$.
\end{lemma}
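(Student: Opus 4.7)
The plan is to combine the Serre duality identity from Remark \ref{rmk:PropOfComplexity}(v), namely $\epsilon_t(M,N)=\epsilon_{-t}(N,\mathbf{S}(M))$, with the flexibility of choosing a split generator given by Lemma \ref{lem:TwoGenerators}. Saturatedness allows us to replace $\delta_t$ by $\epsilon_t$ throughout the computation (Lemma \ref{lemm:EntropyInExt-distance}), so the whole argument takes place in terms of graded $\Hom$-dimensions, which is where Serre duality can actually be applied.

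Concretely, I would fix a split generator $G$ and compute
\[
\epsilon_t(G,F^{-n}(G))\;\stackrel{(\text{Serre})}{=}\;\epsilon_{-t}(F^{-n}(G),\mathbf{S}(G))\;\stackrel{(\text{equiv.})}{=}\;\epsilon_{-t}(G,F^n\mathbf{S}(G)),
\]
where the second equality uses that $F^n$ is an autoequivalence, hence preserves the Ext-distance function (a direct consequence of the retraction property in Remark \ref{rmk:PropOfComplexity}(iv) applied to both $F^n$ and $F^{-n}$). Since $\mathbf{S}$ is an autoequivalence, $\mathbf{S}(G)$ is again a split generator of $\cD$. Therefore, by the two-generator version of Lemma \ref{lem:TwoGenerators} combined with Lemma \ref{lemm:EntropyInExt-distance}, the quantity $\epsilon_{-t}(G,F^n\mathbf{S}(G))$ has exponential growth rate exactly $h_{-t}(F)$. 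Taking $\frac{1}{n}\log$ and passing to the limit yields $h_t(F^{-1})=h_{-t}(F)$.

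The polynomial statement follows by exactly the same substitution, carried out inside the $\log\log$-normalisation. Using Lemma \ref{lemm:EntropyInExt-distance} for $F^{-1}$, the chain of equalities above, and the already-proved identity $h_t(F^{-1})=h_{-t}(F)$, one rewrites
\[
\ph_t(F^{-1})=\limsup_{n\to\infty}\frac{\log\epsilon_{-t}(G,F^n\mathbf{S}(G))-n\,h_{-t}(F)}{\log n},
\]
which, by the two-generator form of the polynomial-entropy formula (Lemma \ref{lem:TwoGenerators} together with Lemma \ref{lemm:EntropyInExt-distance}, applied to the generators $G$ and $\mathbf{S}(G)$), is precisely $\ph_{-t}(F)$. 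Specialising to $t=0$ gives the two invariance statements $h^\cat(F^{-1})=h^\cat(F)$ and $\ph(F^{-1})=\ph(F)$.

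The main potential obstacle is the bookkeeping in the very last step: one must make sure that the polynomial-entropy analogue of Lemma \ref{lem:TwoGenerators} really does allow using two \emph{different} generators $G$ and $\mathbf{S}(G)$ on the two sides of $\epsilon_{-t}(\cdot,F^n(\cdot))$, rather than the same one. This is a mild variant of the argument given for $h_t$ in Lemma \ref{lem:TwoGenerators}, using the multiplicative triangle inequality for $\delta_t$ (Remark \ref{rmk:PropOfComplexity}(iii)) to absorb the constants $\delta_t(G,\mathbf{S}(G))$ and $\delta_t(\mathbf{S}(G),G)$, which become $O(1)$ and therefore negligible after division by $\log n$. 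Once that two-generator form of the polynomial-entropy formula is recorded (it can even be stated as a short preliminary lemma before this one), the rest of the argument is a direct translation of the entropy proof.
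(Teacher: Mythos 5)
Your proof is correct and follows essentially the same route as the paper: pass from $\delta_t$ to $\epsilon_t$ via saturatedness, rewrite $\epsilon_t(G,F^{-n}(G))$ as $\epsilon_{-t}(G,F^n\mathbf S(G))$ using Serre duality and the autoequivalence property, and invoke the two-generator form (Lemmas \ref{lem:TwoGenerators} and \ref{lemm:EntropyInExt-distance}) since $\mathbf S(G)$ is a split generator. The only cosmetic difference is the order of operations: the paper first uses that $F^n$ is an equivalence to write $\epsilon_t(F^n(G),G)$ and then applies Serre duality together with the fact that autoequivalences commute with $\mathbf S$, whereas you apply Serre duality first and then apply $F^n$ to both arguments, which sidesteps the explicit appeal to $F\mathbf S\cong\mathbf S F$. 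The ``potential obstacle'' you flag is already handled by Lemma \ref{lemm:EntropyInExt-distance}, which is stated for two different generators, so no additional preliminary lemma is needed.
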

\begin{proof}
Let $G$ be a split generator and $\mathbf{S}$ the Serre functor. Then for any $n>0$, $$\epsilon_t(G, F^{-n}(G))=\epsilon_t(F^n(G), G)=\epsilon_{-t}(G, F^n(\mathbf{S}(G))),$$
where the second equality follows from Remark \ref{rmk:PropOfComplexity} (v) and the fact that all autoequivalences commute with the Serre functor. As $\mathbf{S}(G)$ is also a split generator of $\cD$, we obtain that 
\[h_t(F^{-1})=\lim_{n\to \infty} \frac{\log \epsilon_t(G, F^{-n}(G))}{n}=\lim_{n\to \infty} \frac{\log \epsilon_{-t}(G, F^n(\mathbf{S}(G)))}{n}=h_{-t}(F).\]
Consequently, combined with Lemma \ref{lemm:EntropyInExt-distance},
\begin{align*}
    \ph_t(F^{-1})&=\limsup_{n\to \infty} \frac{\log \epsilon_t(G, F^{-n}(G))-nh_t(F^{-1})}{\log(n)}\\&=\limsup_{n\to \infty} \frac{\log \epsilon_{-t}(G, F^n(\mathbf{S}(G)))-nh_{-t}(F)}{\log(n)}=\ph_{-t}(F). \qedhere
\end{align*}
\end{proof}

The following observation will be used in Section \ref{sec:LineBundle} when we study the categorical polynomial entropy of tensoring a line bundle.
\begin{lemma}[Composition of commuting functors]\label{lemma:composition}
Let $\cD$ be a saturated triangulated category admitting a Serre functor. Let $F_1$ be an autoequivalence and $F_2$ an endofunctor. Assume that $F_1F_2=F_2F_1$.
\begin{enumerate}[label=(\roman*)]
    \item If $h_t(F_1)$ is an odd function in $t$, then 
    \[h_t(F_1F_2)=h_t(F_1)+h_t(F_2).\]
    In this case, \[\ph_t(F_1F_2)\leq \ph_t(F_1)+\ph_t(F_2).\]
    The equality holds if $\ph_t(F_1)$ is an odd function.
    \item If $h^\cat(F_1)=0$, then 
    \[h^\cat(F_1F_2)=h^\cat(F_2).\]
    In this case, \[\ph(F_1F_2)\leq \ph(F_1)+\ph(F_2).\]
    If moreover $\ph(F_1)=0$, then $\ph(F_1F_2)=\ph(F_2)$.
\end{enumerate}
\end{lemma}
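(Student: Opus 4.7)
The plan rests on the observation that $F_1F_2 = F_2F_1$ implies $(F_1F_2)^n = F_1^n F_2^n$, which converts dynamics of the product into a product of dynamics. Fixing a split generator $G$, I will combine the multiplicative triangle inequality (Remark~\ref{rmk:PropOfComplexity}(iii)) applied at the intermediate object $F_1^n(G)$ with the retraction property (Remark~\ref{rmk:PropOfComplexity}(iv)) for the functor $F_1^n$ to obtain
\[
\delta_t(G,(F_1F_2)^n(G)) \;\leq\; \delta_t(G, F_1^n(G)) \cdot \delta_t(F_1^n(G), F_1^n F_2^n(G)) \;\leq\; \delta_t(G, F_1^n(G)) \cdot \delta_t(G, F_2^n(G)).
\]
Taking logarithms, dividing by $n$ and letting $n\to\infty$ yields the submultiplicative bound $h_t(F_1F_2) \leq h_t(F_1) + h_t(F_2)$. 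Once the matching entropy equality is established, subtracting $n\, h_t(F_1F_2)$ from both sides, dividing by $\log n$ and taking $\limsup$ upgrades the same estimate to $\ph_t(F_1F_2) \leq \ph_t(F_1) + \ph_t(F_2)$.

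For the reverse inequalities I will apply the exact same argument to the pair $(F_1^{-1}, F_1F_2)$, which still commutes since $F_1^{-1}(F_1F_2) = F_2 = (F_1F_2)F_1^{-1}$. This produces $h_t(F_2) \leq h_t(F_1^{-1}) + h_t(F_1F_2)$ and, provided the relevant entropy equality holds, $\ph_t(F_2) \leq \ph_t(F_1^{-1}) + \ph_t(F_1F_2)$. Lemma~\ref{lemma:Inverse} supplies $h_t(F_1^{-1}) = h_{-t}(F_1)$ and $\ph_t(F_1^{-1}) = \ph_{-t}(F_1)$, so in case (i) the oddness hypotheses collapse these to $-h_t(F_1)$ and $-\ph_t(F_1)$, producing the lower bounds $h_t(F_1) + h_t(F_2) \leq h_t(F_1F_2)$ and $\ph_t(F_1) + \ph_t(F_2) \leq \ph_t(F_1F_2)$. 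This closes case (i), the entropy equality being proved first and then fed into the polynomial entropy step.

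For case (ii) I specialize the above to $t = 0$. The hypothesis $h^\cat(F_1) = 0$ together with $h^\cat(F_1^{-1}) = h^\cat(F_1) = 0$ (from Lemma~\ref{lemma:Inverse}) makes both $h^\cat(F_1F_2) \leq h^\cat(F_2)$ and $h^\cat(F_2) \leq h^\cat(F_1F_2)$ available, giving the entropy equality $h^\cat(F_1F_2) = h^\cat(F_2) = h^\cat(F_1) + h^\cat(F_2)$. Feeding this into the $\ph$-upper bound yields $\ph(F_1F_2) \leq \ph(F_1) + \ph(F_2)$. Under the extra hypothesis $\ph(F_1) = 0$, Lemma~\ref{lemma:Inverse} also gives $\ph(F_1^{-1}) = 0$, and the $(F_1^{-1}, F_1F_2)$-estimate delivers the matching lower bound $\ph(F_2) \leq \ph(F_1F_2)$, hence the claimed equality.

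The main subtle point, rather than a technical obstacle, is bookkeeping: each polynomial entropy inequality requires the corresponding entropy equality as a prerequisite, since otherwise the normalizing subtraction $-n\, h_t(F_1F_2)$ does not split cleanly as $-n\, h_t(F_1) - n\, h_t(F_2)$ and the passage from $\log \delta_t$ to $\ph_t$ breaks down. The oddness and vanishing hypotheses in (i) and (ii) are engineered precisely to make these cancellations work; everything else is a routine invocation of Remark~\ref{rmk:PropOfComplexity} and Lemma~\ref{lemma:Inverse}.
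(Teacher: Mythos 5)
Your proposal is correct and follows essentially the same route as the paper's own proof: the submultiplicative bound $\delta_t(G,(F_1F_2)^n(G))\le\delta_t(G,F_1^n(G))\,\delta_t(G,F_2^n(G))$ via commutation plus the triangle and retraction properties, the application of the same bound to the commuting pair $(F_1^{-1},F_1F_2)$, and the use of Lemma~\ref{lemma:Inverse} together with the oddness/vanishing hypotheses to squeeze out the equalities. Your remark that each $\ph_t$-inequality requires the matching $h_t$-equality so that $-n\,h_t(F_1F_2)$ splits additively is exactly the correct bookkeeping, and you handle it carefully.
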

\begin{proof}
We only show $(i)$, as the proof of $(ii)$ is similar. By Remark \ref{rmk:PropOfComplexity} (iii) and (iv), for any split generator $G$, we have
\[\delta_t(G, (F_1F_2)^n(G))\leq \delta_t(G, F_1^n(G))\delta_t(G, F_2^n(G)).\]
Then it is easy to see that $h_t(F_1F_2)\leq h_t(F_1)+h_t(F_2)$
(\cite[Section 2.2]{DHKK}).
Since $F_1F_2$ and $F_1^{-1}$ also commute, we have 
\[
h_t(F_2)\leq h_t(F_1F_2)+h_t(F_1^{-1})=h_t(F_1F_2)+h_{-t}(F_1),
\]
where the last equality follows from Lemma \ref{lemma:Inverse}. Hence 
\[h_t(F_2)-h_{-t}(F_1)\leq h_t(F_1F_2)\leq h_t(F_1)+h_t(F_2).\] When $h_t(F_1)$ is an odd function, we get the claimed equality. In this case, 
\begin{align*}
\ph_t(F_1F_2)&=\limsup_{n\to\infty}\frac{\log\delta_t(G, (F_1F_2)^n(G))-nh_t(F_1F_2)}{\log(n)}\\
&\leq\limsup_{n\to \infty}\frac{\log\delta_t(G, F_1^n(G))+\log\delta_t(G, F_2^n(G))-nh_t(F_1)-nh_t(F_2)}{\log(n)}\\
&\leq\limsup_{n\to \infty}\frac{\log\delta_t(G, F_1^n(G))-nh_t(F_1)}{\log(n)}+\limsup_{n\to \infty}\frac{\log\delta_t(G, F_2^n(G))-nh_t(F_2)}{\log(n)}\\
&=\ph_t(F_1)+\ph_t(F_2).
\end{align*}
Using again the fact that $F_1F_2$ commutes with $F_1^{-1}$ and $h_t(F_1^{-1})=h_{-t}(F_1)$, %is an odd function,
 we get 
\[\ph_t(F_2)\leq \ph_t(F_1^{-1})+\ph_t(F_1F_2)=\ph_{-t}(F_1)+\ph_t(F_1F_2),\]
where the last equality uses Lemma \ref{lemma:Inverse}. Therefore, 
\[\ph_t(F_2)-\ph_{-t}(F_1)\leq \ph_t(F_1F_2)\leq \ph_t(F_1)+\ph_t(F_2).\] This gives the claimed equality when $\ph_t(F_1)$ is an odd function.
\end{proof}

\section{Polynomial mass growth rate}\label{sec:MassGrowth}
As is proposed and sketched in \cite[Section 4.5]{DHKK} and worked out in detail in \cite{Ikeda}, when the triangulated category $\cD$ admits a Bridgeland stability condition, one can measure the complexity of an endofunctor on $\cD$ by the so-called (exponential) \textit{growth rate of mass}. We study in this section the \textit{polynomial} analogue of  mass growth and compare it to the categorical polynomial entropy. Let us first recall some basic notions.

Let $\cD$ be a triangulated category and $\sigma=(Z, \mathcal{A})$ be a stability condition in the sense of Bridgeland \cite{BridgelandAnnals}, where $Z\colon K_0(\cD)\rightarrow \Gamma \to \mathbb{C}$ is a homomorphism\footnote{We always assume that $Z$ factors through some finite-rank free abelian group $\Gamma$, which is often taken to be the numerical Grothendieck group $\mathcal{N}(\cD)$ in this paper, and that $\sigma$ satisfies the support property of \cite{KontsevichSoibelman08}.} called the \textit{central charge}, and $\mathcal{A}$ is the heart of a bounded t-structure on $\cD$. Then for any non-zero object $E$, its \textit{mass function} with respect to $\sigma$ is defined as the following real function in $t$:
\[m_{\sigma, t}(E)\coloneqq\sum_k |Z(A_k)|e^{\phi(A_k)t},\]
where $A_k$ are the $\sigma$-semistable factors of $E$ and  $\phi$ is the phase function. Denote $m_\sigma\coloneqq m_{\sigma,0}$. The space of stability conditions is denoted by $\Stab(\cD)$ (or more precisely $\Stab_\Gamma(\cD)$ if one want to specify the choice of $\Gamma$), which is naturally a complex manifold of dimension $\operatorname{rk}(\Gamma)$, by \cite{BridgelandAnnals}.

\begin{rmk}\label{rmk:PropOfMass}
We collect some fundamental  properties of the mass function, due to \cite{DHKK} and \cite{Ikeda}.
\begin{enumerate}[label=(\roman*)]
\item (Triangle inequality). For any distinguished triangle $E'\to E\to E''\xrightarrow{+1}$, we have
\[m_{\sigma, t}(E)\leq m_{\sigma, t}(E')+m_{\sigma, t}(E'').\]
\item For any non-zero objects $E, E'$,  $m_{\sigma, t}(E)\leq m_{\sigma, t}(E')\delta_t(E', E)$, where $\delta_t$ is the complexity function in Definition \ref{def:Complexity}.
\item If the distance (defined in \cite{BridgelandAnnals}) between two stability conditions $\sigma, \tau$ is finite, then there exist two functions $C_1(t), C_2(t)\colon \mathbb{R}\to \mathbb{R}_{>0}$, such that for any non-zero object $E$, we have
\[C_1(t) m_{\tau, t}(E)< m_{\sigma, t}(E)< C_2(t) m_{\tau, t}(E).\]
\end{enumerate}
\end{rmk}

Recall also the definition of the mass growth in \cite[Section 4.5]{DHKK}.
\begin{defn}[Mass growth]\label{def:MassGrowth}
Let $\cD$ be a triangulated category endowed with a stability condition $\sigma$. Let $G$ be a split generator of $\cD$, then the \textit{mass growth function} of an endofunctor $F$ is defined as 
\[
h_{\sigma,t}(F) \coloneqq
\limsup_{n\ra\infty}
\frac{\log m_{\sigma,t}(F^n(G))}{n}.
\]
It is shown in \cite[Theorem 3.5]{Ikeda} that the definition is independent of the choice of the split generator, and 
\[
h_{\sigma,t}(F) = \sup_{0\neq E\in\cD}
\Big\{\limsup_{n\ra\infty}
\frac{\log m_{\sigma,t}(F^n(E))}{n}
\Big\}.
\]
Moreover, it depends only on the connected component of $\Stab(\cD)$ in which $\sigma$ lies \cite[Proposition 3.10]{Ikeda}.
We denote $h_{\sigma}(F)\coloneqq h_{\sigma, 0}(F)$, the value at $t=0$.
\end{defn}

We propose the following notion as a secondary measurement of the mass growth, similarly to Definition \ref{def:CatPolEntropy}.
\begin{defn}[Polynomial mass growth]
In the same setting as in Definition \ref{def:MassGrowth}, the \textit{polynomial mass growth function} of $F$ is defined to be
\[\ph_{\sigma,t}(F) \coloneqq \limsup_{n\ra\infty}
\frac{\log m_{\sigma,t}(F^n(G))-nh_{\sigma,t}(F)}{\log (n)},\]
where $h_{\sigma, t}(F)$ is the mass growth of $F$. Let $\ph_{\sigma}(F)\coloneqq \ph_{\sigma, 0}(F)$.
\end{defn}

\begin{lemma}
The definition of  $\ph_{\sigma, t}(F)$ is independent of the choice of the split generator $G$. Moreover, 
\[
\ph_{\sigma,t}(F) = \sup_{0\neq E\in\cD}
\Big\{\limsup_{n\ra\infty}
\frac{\log m_{\sigma,t}(F^n(E))-nh_{\sigma,t}(F)}{\log (n)}
\Big\}.
\]
\end{lemma}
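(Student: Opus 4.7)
The plan is to mirror the argument for the mass growth function itself (that is, Theorem~3.5 and Proposition~3.10 of Ikeda, as recorded in Definition~\ref{def:MassGrowth}), upgrading the estimates to the finer polynomial scale $\log n$. The two key ingredients are Remark~\ref{rmk:PropOfMass}(ii), which gives $m_{\sigma,t}(E)\leq m_{\sigma,t}(E')\,\delta_t(E',E)$, together with the retraction property of the complexity function from Remark~\ref{rmk:PropOfComplexity}(iv), namely $\delta_t(F^n(E'),F^n(E))\leq\delta_t(E',E)$. The latter is crucial: it converts iterations of $F$ inside $\delta_t$ into a quantity that is \emph{constant in} $n$, so that its logarithm is absorbed when we divide by $\log n$.

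First I would prove independence of the choice of split generator. Given two split generators $G,G'$, Remark~\ref{rmk:PropOfMass}(ii) applied to $E=F^n(G)$, $E'=F^n(G')$ yields
\[
m_{\sigma,t}(F^n(G))\leq m_{\sigma,t}(F^n(G'))\cdot\delta_t(F^n(G'),F^n(G))\leq m_{\sigma,t}(F^n(G'))\cdot\delta_t(G',G),
\]
where $\delta_t(G',G)<\infty$ because $G'$ is a split generator. Taking logarithms, subtracting $nh_{\sigma,t}(F)$ and dividing by $\log n$, the extra term $\log\delta_t(G',G)/\log n$ tends to $0$, so the $\limsup$ computed with $G$ is bounded above by the one computed with $G'$; symmetry then gives equality.

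Next I would establish the supremum formula. The inequality $\ph_{\sigma,t}(F)\leq\sup_{0\neq E}\bigl\{\limsup_n(\log m_{\sigma,t}(F^n(E))-nh_{\sigma,t}(F))/\log n\bigr\}$ is trivial by taking $E=G$. For the reverse, fix any non-zero $E\in\cD$. Since $G$ is a split generator, $\delta_t(G,E)<\infty$, and applying Remark~\ref{rmk:PropOfMass}(ii) followed by the retraction property gives
\[
m_{\sigma,t}(F^n(E))\leq m_{\sigma,t}(F^n(G))\cdot\delta_t(F^n(G),F^n(E))\leq m_{\sigma,t}(F^n(G))\cdot\delta_t(G,E).
\]
Taking $\log$, subtracting $nh_{\sigma,t}(F)$, dividing by $\log n$, and taking $\limsup_{n\to\infty}$, the contribution of $\log\delta_t(G,E)$ vanishes, so the $\limsup$ for $E$ is bounded by $\ph_{\sigma,t}(F)$; taking sup over $E$ completes the proof.

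There is no serious obstacle here; the only subtle point to flag is that the entire argument hinges on $\delta_t(G,E)$ being finite, which in turn relies on $G$ being a \emph{split} generator so that every object of $\cD$ lies in the thick subcategory it generates. This is precisely the same subtlety that underlies the analogous lemmas for $h_t(F)$ and $h_{\sigma,t}(F)$, and it is the reason the polynomial mass growth is truly intrinsic to $(\cD,F,\sigma)$ rather than to any auxiliary choice.
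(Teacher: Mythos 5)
Your proof is correct and follows exactly the route the paper intends: the paper's proof is a one-line citation to Ikeda's Theorem~3.5(1), using Remark~\ref{rmk:PropOfMass}(ii), and you have simply unpacked that argument (mass--complexity inequality plus retraction of $\delta_t$ under $F^n$, with the resulting constant factor killed by $\log n$). No gaps; the finiteness of $\delta_t(G,E)$ via split generation is indeed the only point worth flagging, and you did.
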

\begin{proof}
The proof is the same as in \cite[Theorem 3.5(1)]{Ikeda}, by using Remark \ref{rmk:PropOfMass} (ii).
\end{proof}

\begin{lemma}[Deforming stability conditions]\label{lemma:DeformStab}
The function $\ph_{\sigma, t}(F)$ only depends on the connected component of $\Stab(\cD)$ in which $\sigma$ lies. 
\end{lemma}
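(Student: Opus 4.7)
The plan is to mimic the proof of \cite[Proposition~3.10]{Ikeda}, which establishes the analogous statement for the mass growth $h_{\sigma,t}(F)$, and to reduce the global claim to a local (and essentially pointwise) comparison via the estimate in Remark~\ref{rmk:PropOfMass}(iii).

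First I would show the following local statement: if $\sigma,\tau\in\Stab(\cD)$ are at finite Bridgeland distance, then $\ph_{\sigma,t}(F)=\ph_{\tau,t}(F)$. Indeed, by Remark~\ref{rmk:PropOfMass}(iii) there exist positive functions $C_1(t),C_2(t)$ such that for every nonzero $E\in\cD$
\[
C_1(t)\, m_{\tau,t}(E)\;<\;m_{\sigma,t}(E)\;<\;C_2(t)\, m_{\tau,t}(E).
\]
Applying this with $E=F^n(G)$ for a split generator $G$ and taking logarithms gives
\[
\log m_{\sigma,t}(F^n(G)) = \log m_{\tau,t}(F^n(G)) + O(1),
\]
where the $O(1)$ term depends on $t$ but not on $n$. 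By \cite[Proposition~3.10]{Ikeda} applied to the (open, path-connected) ball of finite distance containing $\sigma$ and $\tau$, we also have $h_{\sigma,t}(F)=h_{\tau,t}(F)$. Subtracting $n h_{\sigma,t}(F)$ on both sides, dividing by $\log n$, and passing to the $\limsup$, the bounded error term disappears and we get $\ph_{\sigma,t}(F)=\ph_{\tau,t}(F)$.

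Next I would upgrade this local statement to a statement on a whole connected component. Since $\Stab(\cD)$ is a complex manifold, each connected component is path-connected. Given any $\sigma,\tau$ in the same component, choose a continuous path $\gamma\colon [0,1]\to\Stab(\cD)$ from $\sigma$ to $\tau$. Each point $\gamma(s)$ has an open neighbourhood of finite diameter (for instance a small Bridgeland-distance ball), and by compactness of $\gamma([0,1])$ we can extract a finite subcover and choose intermediate points $\sigma=\sigma_0,\sigma_1,\dots,\sigma_N=\tau$ with consecutive pairs at finite distance. Applying the local statement to each consecutive pair and chaining the equalities, we conclude $\ph_{\sigma,t}(F)=\ph_{\tau,t}(F)$.

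The only delicate point is the local statement above; the chaining argument is purely topological. The core analytic input is the two-sided comparison of mass functions in Remark~\ref{rmk:PropOfMass}(iii), which survives division by $\log n$ in the limit precisely because the comparison constants do not depend on $n$, together with the already-known deformation invariance of $h_{\sigma,t}(F)$ that allows us to subtract the same normalization on both sides.
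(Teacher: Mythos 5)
Your proposal is correct and follows the same route as the paper: the two-sided comparison of mass functions from Remark~\ref{rmk:PropOfMass}(iii) gives $\log m_{\sigma,t}(F^n(G)) = \log m_{\tau,t}(F^n(G)) + O(1)$, and after subtracting $n h_{\sigma,t}(F) = n h_{\tau,t}(F)$ (Ikeda, Proposition~3.10) and dividing by $\log n$, the bounded discrepancy vanishes in the $\limsup$. The compactness/chaining step in your second paragraph is superfluous, though harmless: Bridgeland's generalized metric on $\Stab(\cD)$ is already finite on each connected component (the locus of finite distance from a fixed point is both open and closed), so any two stability conditions in the same component satisfy the hypothesis of Remark~\ref{rmk:PropOfMass}(iii) directly, with no need to subdivide a path.
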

\begin{proof}
Similarly as in \cite[Proposition 3.10]{Ikeda}, it is a direct consequence of Remark \ref{rmk:PropOfMass} (iii).
\end{proof}

Given an endofunctor, to relate its mass growth to its entropy, Ikeda \cite[Theorem 3.5 (2)]{Ikeda} showed that $h_{\sigma,t}(F)\leq h_t(F)$. The following is its polynomial counterpart.
\begin{lemma}[Comparison with polynomial entropy]\label{lemma:ComparingEntropyMass}
For any real number $t$,
if $h_{\sigma,t}(F)=h_t(F)$, then 
\[\ph_{\sigma,t}(F)\leq \ph_t(F).\]
\end{lemma}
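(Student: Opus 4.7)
The plan is to deduce the inequality directly from the sub-multiplicative comparison between mass and complexity, namely the bound recalled in Remark~\ref{rmk:PropOfMass}~(ii):
\[
m_{\sigma,t}(E) \leq m_{\sigma,t}(E')\,\delta_t(E',E)
\qquad\text{for all non-zero }E,E'\in\cD.
\]
This is exactly the ingredient that Ikeda used to obtain $h_{\sigma,t}(F)\leq h_t(F)$ at the exponential level, and the same inequality, after a careful bookkeeping of additive constants, will give the polynomial-scale refinement.

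First, fix a split generator $G$ of $\cD$ and apply the above inequality with $E' = G$ and $E = F^n(G)$ (assuming, as we may, that $F^n(G)\neq 0$ for all $n$ by the non-virtual-vanishing convention on $F$). Taking logarithms yields
\[
\log m_{\sigma,t}(F^n(G)) \;\leq\; \log m_{\sigma,t}(G) + \log \delta_t(G, F^n(G)).
\]
Subtracting $n\,h_{\sigma,t}(F) = n\,h_t(F)$ from both sides (this is where the hypothesis is used), we obtain
\[
\log m_{\sigma,t}(F^n(G)) - n\,h_{\sigma,t}(F)
\;\leq\;
\log m_{\sigma,t}(G) + \bigl(\log \delta_t(G,F^n(G)) - n\,h_t(F)\bigr).
\]

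Second, divide through by $\log(n)$ and take $\limsup_{n\to\infty}$. The term $\log m_{\sigma,t}(G)/\log(n)$ tends to $0$ since $m_{\sigma,t}(G)$ is a positive constant independent of $n$, so its contribution vanishes in the limit. The left-hand side converges (as a $\limsup$) to $\ph_{\sigma,t}(F)$ by definition, while the right-hand side converges to $\ph_t(F)$, giving the claimed inequality $\ph_{\sigma,t}(F)\leq \ph_t(F)$.

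There is essentially no obstacle: the argument is a one-line consequence of Remark~\ref{rmk:PropOfMass}~(ii), provided one is careful that (a) the equality hypothesis $h_{\sigma,t}(F) = h_t(F)$ is what allows the subtraction to produce quantities of matching normalization on both sides, and (b) the additive constant $\log m_{\sigma,t}(G)$ is killed by the $\log(n)$ in the denominator, which is the whole point of working at polynomial rather than exponential scale. The independence of $\ph_{\sigma,t}(F)$ from the choice of split generator, already established above, ensures we may use the same $G$ on both sides without loss of generality.
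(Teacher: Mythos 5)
Your proof is correct and follows exactly the same approach as the paper's: apply Remark~\ref{rmk:PropOfMass}~(ii) with $E'=G$ and $E=F^n(G)$, take logarithms, subtract $nh_{\sigma,t}(F)=nh_t(F)$, and divide by $\log(n)$ before passing to the $\limsup$, observing that the constant $\log m_{\sigma,t}(G)$ is killed by the denominator. There is nothing to add.
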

\begin{proof}
Fix a split generator $G$.  Remark \ref{rmk:PropOfMass}(ii) implies that
\[m_{\sigma, t}(F^n(G))\leq m_{\sigma, t}(G)\delta_t(G, F^n(G)).\] Hence
\[\log m_{\sigma, t}(F^n(G))-n h_{\sigma,t}(F)\leq \log m_{\sigma, t}(G)+\log \delta_t(G, F^n(G))-nh_t(F).\] One can conclude by dividing by $\log(n)$ and taking limit.
\end{proof}

Recall that a stability condition $\sigma=(Z, \mathcal{A})$ is called \emph{algebraic} if the corresponding heart $\mathcal{A}$ is a finite length abelian category with finitely many isomorphism classes of simple objects. Examples of triangulated categories admitting algebraic stability conditions include derived categories with full strong exceptional collection, derived categories of (homologically) finite-dimensional dg-modules over a connective dg-algebra of finite type etc. 

\begin{lemma}[Algebraic stability conditions]
\label{lemma:algstabilitycond}
If a connected component $\Stab^\circ(\cD)\subset\Stab(\cD)$ contains an algebraic stability condition, then for any $\sigma\in\Stab^\circ(\cD)$, we have
\[
h^\pol_{\sigma,t}(F)=h^\pol_t(F).
\]
\end{lemma}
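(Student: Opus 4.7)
The argument will parallel Ikeda's proof of the equality $h_{\sigma,t}(F)=h_t(F)$ for algebraic stability conditions, upgraded to track polynomial growth. By Lemma~\ref{lemma:DeformStab}, the polynomial mass growth $\ph_{\sigma,t}(F)$ depends only on the connected component of $\sigma$; by \cite[Proposition 3.10]{Ikeda}, so does $h_{\sigma,t}(F)$ itself. Hence I would first replace $\sigma$ by any stability condition in the same connected component and assume $\sigma=(Z,\mathcal{A})$ is algebraic. Letting $S_1,\dots,S_n$ denote the simple objects of $\mathcal{A}$, with phases $\phi_1,\dots,\phi_n\in(0,1]$, I would set $G\coloneqq\bigoplus_{i=1}^n S_i$. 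Since $\mathcal{A}$ has finite length and generates $\cD$ under shifts and cones, $G$ is a split generator.

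The crux of the plan is to establish a two-sided comparison
\[
C_1(t)\, m_{\sigma,t}(E) \;\leq\; \delta_t(G,E) \;\leq\; C_2(t)\, m_{\sigma,t}(E), \qquad \forall\, 0\neq E\in\cD,
\]
with constants $C_1(t),C_2(t)>0$ depending only on $\sigma$ and $t$. The lower inequality is immediate from Remark~\ref{rmk:PropOfMass}(ii) with $C_1(t)=1/m_{\sigma,t}(G)$. For the upper one, subadditivity (Remark~\ref{rmk:PropOfComplexity}(ii)) together with the Harder--Narasimhan filtration reduces the task to bounding $\delta_t(G,A)$ for $A$ semistable of phase $\phi$. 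Algebraicity guarantees that $A$ admits a Jordan--Hölder filtration of finite length $\ell(A)\leq m_\sigma(A)/\min_i|Z(S_i)|$ whose composition factors are shifts $S_j[\phi-\phi_j]$ of simples (with $\phi-\phi_j\in\mathbb{Z}$). Iteratively attaching $\ell(A)$ copies of $G$ with the appropriate integer shift, and absorbing the surplus simples of $G$ into the direct summand $N'$ allowed by Definition~\ref{def:Complexity}, would yield
\[
\delta_t(G,A) \;\leq\; \ell(A)\cdot \max_{j}\, e^{(\phi-\phi_j)t}
\;=\; \frac{\ell(A)}{|Z(A)|}\cdot m_{\sigma,t}(A)\cdot \max_j e^{-\phi_j t}
\;\leq\; \frac{\max_j e^{-\phi_j t}}{\min_i |Z(S_i)|}\cdot m_{\sigma,t}(A),
\]
so one may take $C_2(t)=\max_j e^{-\phi_j t}/\min_i|Z(S_i)|$; this is finite precisely because there are only finitely many simples.

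With the comparison in hand, substituting $E=F^n(G)$ and taking logarithms gives
\[
\log C_1(t)+\log m_{\sigma,t}(F^n(G)) \;\leq\; \log\delta_t(G,F^n(G)) \;\leq\; \log C_2(t)+\log m_{\sigma,t}(F^n(G)).
\]
Dividing by $n$ and passing to $\limsup$ kills the $\log C_i(t)/n$ terms and reproves $h_t(F)=h_{\sigma,t}(F)$ in the algebraic setting. Subtracting the common value $n h_t(F)=n h_{\sigma,t}(F)$ from the two middle terms, then dividing by $\log(n)$ and taking $\limsup$, likewise kills the $\log C_i(t)/\log(n)$ terms and yields $\ph_t(F)=\ph_{\sigma,t}(F)$.

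The main technical obstacle is the construction of the finite constant $C_2(t)$: the HN-plus-JH reduction is routine, but the fact that $\max_j e^{-\phi_j t}$ remains bounded crucially relies on there being only finitely many simple phases, which is exactly what the algebraicity hypothesis provides. Once this constant is in place, all the remaining steps are standard $\limsup$ arithmetic.
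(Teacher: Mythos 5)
Your overall strategy is essentially the paper's: establish a two‑sided comparison $C_1(t)\,m_{\sigma,t}\leq\delta_t(G,-)\leq C_2(t)\,m_{\sigma,t}$ for an algebraic stability condition, then observe that the $\limsup$ arithmetic automatically upgrades Ikeda's entropy identity to the polynomial one, since $\log C_i(t)/\log n\to0$. The difference is that the paper simply invokes Ikeda's Theorem~3.14, which proves the comparison for a \emph{specially chosen} algebraic $\sigma_0$ whose simples all have phase $1/2$ (whence the clean factor $e^{t/2}$), and then deforms via Lemma~\ref{lemma:DeformStab}. You instead try to prove the comparison directly for an \emph{arbitrary} algebraic $\sigma$, and this is where two details go wrong.

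First, the claim that a $\sigma$-semistable object $A$ of phase $\phi$ admits a Jordan--Hölder filtration whose factors are shifts $S_j[\phi-\phi_j]$ of the heart simples is not correct: the simple objects of the abelian category $\mathcal{P}(\phi)$ of semistables of phase $\phi$ are the \emph{stable} objects of phase $\phi$, and these need not be shifts of the $S_j$. What you actually want is the finite-length filtration inside the shifted heart: if $\phi\in(k,k+1]$ then $A[-k]\in\mathcal{A}$ and has a composition series with factors $S_j$ (of arbitrary phases $\phi_j$), giving $\delta_t(G,A)\leq\ell(A[-k])\,e^{kt}$, which is enough for the estimate. Second, the proposed constant $C_2(t)=\max_j e^{-\phi_j t}/\min_i|Z(S_i)|$ is not valid in general: when the $\phi_j$ differ, positive combinations $\sum m_j Z(S_j)$ can partially cancel, so $|Z(A)|\geq(\min_i|Z(S_i)|)\sum m_j$ fails (e.g.\ $Z(S_1)\approx 1$, $Z(S_2)\approx -1$ both pushed slightly into the upper half-plane). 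Ikeda's choice of $\sigma_0$ with all $Z(S_j)$ on the positive imaginary axis is precisely what makes the naive bound correct. For a general algebraic $\sigma$ one still gets a finite constant, but only via a compactness argument: $\mu:=\min\{|\sum p_jZ(S_j)|:\ p\in\Delta^{n-1}\}$ is positive because the $Z(S_j)$ lie in the closed upper half-plane minus the positive reals, so one may take $C_2(t)=\max(1,e^{-t})/\mu$. With those two corrections (or, more simply, by reverting to Ikeda's special $\sigma_0$ and then deforming), your final $\limsup$ computation is correct.
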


\begin{proof}
The proof is similar to \cite[Theorem~3.14]{Ikeda}. Ikeda proved $h_{\sigma,t}(F)=h_t(F)$  by showing more strongly that there exists a special algebraic stability condition $\sigma_0$ in the same connected component of the stability manifold, such that 
\[e^{\frac{t}{2}}\delta_t(G, F^n(G))\leq m_{\sigma_0, t}(F^n(G))\leq m_{\sigma_0, t}(G)\delta_t(G, F^n(G)),\] which allows us to deduce that $h^\pol_{\sigma_0,t}(F)=h^\pol_t(F)$. One concludes by Lemma \ref{lemma:DeformStab}.
\end{proof}

\section{Yomdin-type Estimates}
In the spirit of Gromov--Yomdin's Theorem \ref{thm:GromovYomdin}, given an endofunctor of a triangulated category, we want to understand its polynomial entropy and its polynomial mass growth rate, which are of categorical nature, in terms of some cohomological data, which is essentially a matter of linear algebra.
\subsection{Polynomial growth rate in linear algebra}
Let us recall some linear algebra facts here. Given a square complex matrix $M$, let $\rho(M)$ denote the \textit{spectral radius} of $M$, namely, the maximal absolute value of the eigenvalues of $M$. If $M$ is a virtually unipotent\footnote{A matrix is called \textit{virtually unipotent}, or \textit{quasi-unipotent}, if a positive power of the matrix is unipotent.} matrix (so $\rho(M)=1$), then the growth of $\|M^n\|$ is asymptotically $n^{s(M)}$, where $s(M)+1$ is the size of the maximal Jordan block of $M$; the integer $s(M)$ is called the \textit{polynomial growth rate} of $M$ in this case.
If $\rho(M)>1$, then the growth of $\|M^n\|$, when $n\to \infty$, is asymptotically $\rho(M)^nn^{s(M)}$, where $s(M)+1$ is the size of maximal Jordan blocks with eigenvalues having maximal modulus; we feel that it is meaningful to call $s(M)$ the polynomial growth rate of $M$. To make this idea precise, we propose the following definition, which generalizes the notion of polynomial growth rate used in \cite[Section 2]{CanPR}, by normalizing the exponential growth rate determined by the spectral radius. 
\begin{defn}[Polynomial growth rate]
\label{def:polygrowthratelinearmap}
Let $\phi$ be an endomorphism of a finite-dimensional vector space endowed with some norm $\|-\|$. The \textit{polynomial  growth rate} of $\phi$ is defined to be
\[
s(\phi)\coloneqq\lim_{n\ra\infty}\frac{\log\|\phi^n\|-n\log(\rho(\phi))}{\log (n)}.
\]
As all norms on the space of matrices are equivalent, $s(\phi)$ is independent of the choice of the norm.
\end{defn}
Let us record the following basic result.
\begin{lemma}\label{lemma:PolGrowLim}
Notation is as before. The limit in Definition \ref{def:polygrowthratelinearmap} exists, and it is precisely one less than the maximal size of the Jordan blocks whose eigenvalues are of maximal absolute value $\rho(\phi)$. In particular, $s(\phi)$ is a natural number. 
\end{lemma}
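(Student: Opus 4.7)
The plan is to reduce to an explicit Jordan-block computation. Since all norms on $\operatorname{End}(V)$ are equivalent and $s(\phi)$ is independent of the choice of norm, I am free to pick a convenient one; I may also extend scalars to $\mathbb{C}$, since this affects neither the spectral radius nor the Jordan structure. Choosing a Jordan basis, I write $\phi = \bigoplus_i J_{\lambda_i, k_i}$ as a direct sum of standard Jordan blocks, and take $\|-\|$ to be the operator norm with respect to this orthogonal decomposition, so that $\|\phi^n\| = \max_i \|J_{\lambda_i, k_i}^n\|$. In particular, this gives the built-in lower bound $\|\phi^n\| \geq \|J_{\lambda_i, k_i}^n\|$ for each individual block.

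The next step is to analyse a single Jordan block $J = J_{\lambda, k} = \lambda I_k + N$, where $N$ is the standard nilpotent of index $k$. Since $\lambda I_k$ and $N$ commute and $N^k = 0$, the binomial theorem yields the explicit formula $(J^n)_{i,j} = \binom{n}{j-i}\lambda^{n-(j-i)}$ for $j \geq i$ and $n \geq k$. A direct estimate then produces constants $0 < c_J \leq C_J$, depending on $\lambda$ and $k$ but not on $n$, such that
\[
c_J \, |\lambda|^n n^{k-1} \;\leq\; \|J^n\| \;\leq\; C_J \, |\lambda|^n n^{k-1}
\]
for all large $n$ when $\lambda \neq 0$; when $\lambda = 0$ the block is nilpotent and $J^n = 0$ for $n \geq k$, so such blocks make no asymptotic contribution. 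Setting $\rho := \rho(\phi)$, which I assume positive (otherwise $\phi$ is nilpotent and the defining limit is degenerate), let $s+1$ be the maximal size $k_i$ among the blocks with $|\lambda_i| = \rho$. Blocks with $|\lambda_i| < \rho$ contribute only $O\bigl((|\lambda_i|/\rho)^n \rho^n n^{k_i-1}\bigr) = o(\rho^n n^s)$, and combining with the block-maximum formula gives
\[
c \, \rho^n n^s \;\leq\; \|\phi^n\| \;\leq\; C \, \rho^n n^s
\]
for some $0 < c \leq C$ and all sufficiently large $n$. Taking logarithms, subtracting $n\log\rho$ and dividing by $\log n$ yields both the existence of the limit and its value $s$.

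The main potential obstacle is obtaining a matching lower bound: when several blocks share the dominant modulus $\rho$, one might worry about cancellations between their contributions. The block-diagonal setup sidesteps this entirely, since $\|\phi^n\|$ is the \emph{maximum} of the block norms (not a sum), so it suffices to lower-bound a single maximal dominant block, which is immediate from the explicit binomial formula. Once both inequalities are in hand, the remaining argument is elementary real analysis.
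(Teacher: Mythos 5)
Your proof is correct, and while it ultimately relies on the same underlying structure (the Jordan form of $\phi$), it takes a slightly different technical route than the paper. The paper works with the additive Jordan--Chevalley decomposition $\phi/\rho(\phi) = D + N$ (semisimple plus commuting nilpotent) and expands $(D+N)^n$ by the binomial theorem, splitting the sum into the terms $j<s$, $j=s$, $j>s$ and estimating each; the lower bound is extracted from the single middle term $\binom{n}{s}D^{n-s}N^s$ by triangle-inequality arguments. You instead pass to the full Jordan canonical form, choose the operator norm adapted to the block decomposition so that $\|\phi^n\| = \max_i \|J_i^n\|$, and analyse each block explicitly via $(J^n)_{i,j} = \binom{n}{j-i}\lambda^{n-(j-i)}$. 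The payoff of your choice is that the maximum structure of the norm eliminates any worry about cancellation between dominant blocks, and the two-sided bound $c_J|\lambda|^n n^{k-1}\le \|J^n\|\le C_J|\lambda|^n n^{k-1}$ per block makes both inequalities completely explicit; the cost is a little more setup (extending scalars, fixing a Jordan basis). You also correctly flag the degenerate nilpotent case ($\rho = 0$), which both proofs implicitly exclude. Overall this is a clean, slightly more granular version of the argument.
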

\begin{proof}
Let $\frac{\phi}{\rho(\phi)}=D+N$ be the Jordan decomposition, where $D$ is semisimple, $N$ is nilpotent and $ND=DN$. Then the eigenvalues of $D$ are of modulus $\leq 1$. Let $s+1$ be the maximal size of the Jordan blocks of $\frac{\phi}{\rho(\phi)}$ whose eigenvalues are of maximal modulus 1. We have 
\begin{equation*}
\frac{\phi^n}{\rho(\phi)^n}=\sum_{j<s} {{n}\choose{j}}D^{n-j}N^j+{{n}\choose{s}}D^{n-s}N^s+\sum_{j>s} {{n}\choose{j}}D^{n-j}N^j,
\end{equation*}
where, on the right-hand side, the norm of the first term has growth at most $O(n^{s-1})$, the norm of the second term has growth equivalent to $n^s$, and the third term tends to zero, when $n\to \infty$. Therefore, $\left\|\frac{\phi^n}{\rho(\phi)^n}\right\|$ has growth equivalent to $n^s$.
\end{proof}

\subsection{Lower bound for categorical polynomial entropy}
Given a saturated triangulated category $\cD$, its \textit{numerical Grothendieck group}, denoted by $\cN(\cD)$, is by definition the quotient of the Grothendieck group $K_0(\cD)$ by the radical of the Euler pairing $\chi(E, E')\coloneqq \sum_{k\in \mathbb{Z}}(-1)^k\dim \Hom(E, E'[k])$. 

We establish the following Yomdin-type lower bound for the categorical polynomial entropy in terms of the induced action on the numerical Grothendieck group. In passing, we provide an alternative proof for the lower-bound of the categorical entropy previously obtained by \cite[Theorem 2.13]{KST18}.

\begin{prop}[Yomdin-type lower bound]\label{prop:GYLowerBoundPolEntropy}
Let $\cD$ be a saturated triangulated category with a split generator $G$. Let $F$ be an endofunctor of $\cD$. Denote by $\cN(F)$ the induced endomorphism of the numerical Grothendieck group $\cN(\cD)$. Then we have (\cite{KST18})
\[h^\cat(F)\geq \log \rho(\mathcal{N}(F)).\]
If the equality holds (for example when $h^\cat(F)=0$), then
\[ h^\pol(F)\geq s(\cN(F)),\]
where $s$ is the polynomial growth rate.
\end{prop}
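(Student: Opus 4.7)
The plan is to bound the operator norm $\|\cN(F)^n\|$ by the complexity $\delta(G, F^n(G))$ up to a multiplicative constant, using the non-degeneracy of the Euler pairing on $\cN(\cD)$; then both inequalities follow by taking limits in the appropriate normalizations.

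First, I would construct a ``dual basis'' realized by split generators. Let $r$ be the rank of $\cN(\cD)$ and choose objects $Y_1, \ldots, Y_r \in \cD$ whose classes form a $\bZ$-basis of $\cN(\cD)$. Since the Euler pairing is non-degenerate on $\cN(\cD)$ by construction, there exist classes $x_i \in \cN(\cD)$ with $\chi(x_i, [Y_j]) = d\,\delta_{ij}$ for some fixed nonzero integer $d$ (clearing denominators from the rational dual basis). Writing $x_i = [A_i] - [B_i]$ and replacing by $\tilde A_i \coloneqq A_i \oplus G$, $\tilde B_i \coloneqq B_i \oplus G$ makes both of them split generators (they contain $G$ as a summand), while leaving the pairing values unchanged, since the added $G$'s cancel in the difference.

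Next, I would prove the key estimate
\[
|\chi(\tilde A_i - \tilde B_i,\, F^n(Y_j))| \leq C_{ij}\cdot \delta(G, F^n(G))
\]
for constants $C_{ij}$ independent of $n$. Each $|\chi(\tilde A_i, F^n(Y_j))|$ is bounded above by $\epsilon(\tilde A_i, F^n(Y_j))$ and then, by saturatedness (Remark~\ref{rmk:PropOfComplexity}(i)), by $C_0\,\delta(\tilde A_i, F^n(Y_j))$. The triangle inequality together with the retraction property (Remark~\ref{rmk:PropOfComplexity}(iii),(iv)) yields
\[
\delta(\tilde A_i, F^n(Y_j)) \leq \delta(\tilde A_i, G)\cdot \delta(G, F^n(G))\cdot \delta(G, Y_j),
\]
where $\delta(\tilde A_i, G) \leq 1$ (since $G$ is a direct summand of $\tilde A_i$, so a one-step filtration works) and $\delta(G, Y_j) < \infty$ (since $G$ is a split generator). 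Expanding $\cN(F)^n [Y_j] = \sum_k m_{kj}^{(n)}[Y_k]$ in the chosen basis and pairing against $x_i$ identifies the left-hand side with $d\cdot m_{ij}^{(n)}$, so collecting all entries yields $\|\cN(F)^n\| \leq C\cdot \delta(G, F^n(G))$ for a uniform $C > 0$.

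Finally, taking logarithms, dividing by $n$, and applying Gelfand's formula for the spectral radius gives $\log \rho(\cN(F)) \leq h^\cat(F)$, recovering \cite[Theorem~2.13]{KST18}. Under the equality case $h^\cat(F) = \log \rho(\cN(F))$, subtracting $n h^\cat(F) = n\log\rho(\cN(F))$ from both sides of the logarithm of the matrix bound and dividing by $\log n$ gives, after passing to the genuine limit on the left (Lemma~\ref{lemma:PolGrowLim}) and $\limsup$ on the right, the desired refinement $s(\cN(F)) \leq \ph(F)$. The main obstacle is the first step, namely realizing the Euler-dual basis by objects to which the triangle inequality applies with finite constants; the trick of adding $G$ to both halves of the difference $[A_i] - [B_i]$ converts arbitrary representatives into split generators without perturbing the pairing, and turns out to be exactly what is needed.
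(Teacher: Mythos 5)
Your proof is correct, but it takes a genuinely different route from the paper's. The paper fixes a dominant eigenvalue $\lambda$ of $\cN(F)$ with a maximal Jordan block, picks a generalized eigenvector $v$, writes it in a basis $\{[M_i]\}$, and builds a specific object $E=\bigoplus M_i^{\oplus l_i}$ with multiplicities $l_i>|a_i|$ large enough to dominate the coefficients of $v$; the key inequality there is $\epsilon(G\oplus\bigoplus M_i, F^n(G\oplus E))\geq \|\cN(F)^nv\|$, a \emph{lower} bound on the complexity by the orbit of the chosen eigenvector. You instead realize a left Euler-dual basis $x_i=[\tilde A_i]-[\tilde B_i]$ by pairs of split generators (the $\oplus\,G$ trick making both halves generators without perturbing the class difference is exactly the right move), and use the triangle inequality plus retraction plus saturatedness to bound each matrix entry $m_{ij}^{(n)}$ of $\cN(F)^n$ by $C_{ij}\delta(G,F^n(G))$, yielding the single clean estimate $\|\cN(F)^n\|\leq C\,\delta(G,F^n(G))$ from which both the spectral-radius bound and the polynomial bound fall out. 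Your version is arguably more robust: it bounds the full operator norm rather than just the orbit of one eigenvector, and avoids the multiplicity bookkeeping. Two points worth being explicit about in a write-up: the comparison constant in Remark~\ref{rmk:PropOfComplexity}(i) is for a \emph{fixed} generator in the first slot, so you must take the maximum over the finitely many $\tilde A_i,\tilde B_i$; and the existence of the dual basis uses non-degeneracy of $\chi$ in the \emph{first} argument on $\cN(\cD)$, which holds here because the Serre functor makes the left and right radicals coincide in the saturated case.
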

\begin{proof}
For ease of notation, denote  $f\coloneqq \mathcal{N}(F)\in \operatorname{End}(\mathcal{N}(\cD))$.
Let $\lambda$ be an eigenvalue of $f$ with $|\lambda|=\rho(f)$ such that its characteristic space $\ker(f-\lambda \operatorname{id})^\infty$ has a maximal Jordan block, whose size is denoted  $s+1$ ($s\geq 0$). Let $v\in \mathcal{N}(\cD)_\mathbb{C}$ be a vector such that 
$\{v=v_0, v_1, \dots, v_s\}$ is a basis of such a maximal Jordan  block, where $v_k\coloneqq(f-\lambda\operatorname{id})^kv$ for $k=0, \dots, s$.

Take objects $M_1, \dots, M_m$ in $\cD$ such that their classes in $\cN(\cD)$ form a basis. We define the following norm on $\mathcal{N}(\cD)_{\mathbb{C}}$:
\[ \| w\|\coloneqq\sum_{i=1}^{m} |\chi_{\mathbb{C}}([M_i], w)| \]
for any $w \in \cN(\cD)_\mathbb{C}$, where $\chi_\mathbb{C}$ is the linear extension of the Euler pairing $\chi$.

Write $v=\sum_{i=1}^ma_i[M_i]$ with $a_i\in \mathbb{C}$. Choose positive  integers $l_i> |a_i|$ for all $i=1, \dots, m$. Consider the object $E=\bigoplus_{i=1}^m M_i^{\oplus l_i}$. We have for any $n>0$,
\begin{align*}
    \epsilon\left(G\oplus \bigoplus_{i=1}^mM_i, F^n(G\oplus E)\right)
    &\geq \epsilon\left(\bigoplus_{i=1}^mM_i, F^n(E)\right)\\
    &= \sum_{i=1}^m\sum_{j=1}^m l_j \epsilon(M_i, F^n(M_j))\\
    &\geq \sum_{i=1}^m\sum_{j=1}^m |a_j| \epsilon(M_i, F^n(M_j))\\
    &\geq \sum_{i=1}^m\sum_{j=1}^m |a_j|\cdot |\chi(M_i, F^n(M_j))|\\
    &=\sum_{j=1}^m|a_j|\cdot \|F^n(M_j)\|\\
    &\geq \|f^n(v)\|\\
    &= \|\lambda^nv_0+\binom{n}{1}\lambda^{n-1}v_1+\cdots+\binom{n}{s}\lambda^{n-s}v_s\|.
\end{align*}
Since $G\oplus \bigoplus_{i=1}^mM_i$ and  $G\oplus E$ are split generators, 
we obtain 
\begin{align*}
h^\cat(F)&=\lim_{n\to \infty}\frac{\log\epsilon\left(G\oplus \bigoplus_{i=1}^mM_i, F^n(G\oplus E)\right)}{n}\\
&\geq \lim_{n\to \infty}\frac{\log\|\lambda^nv_0+\binom{n}{1}\lambda^{n-1}v_1+\cdots+\binom{n}{s}\lambda^{n-s}v_s\|}{n}\\&=\log|\lambda|\\&=\log \rho(\mathcal{N}(F)). 
\end{align*}
This recovers \cite[Theorem 2.13]{KST18}.\\
Now if $h^\cat(F)=\log|\lambda|$, by Lemma \ref{lemm:EntropyInExt-distance}, we have 
\begin{align*}
h^\pol(F)&=\limsup_{n\to \infty}\frac{\log\epsilon\left(G\oplus \bigoplus_{i=1}^mM_i, F^n(G\oplus E)\right)-nh^\cat(F)}{\log(n)}\\
&\geq \limsup_{n\to \infty}\frac{\log\|\lambda^nv_0+\binom{n}{1}\lambda^{n-1}v_1+\cdots+\binom{n}{s}\lambda^{n-s}v_s\|-n\log|\lambda|}{\log(n)}\\&=s\\&=s(\mathcal{N}(F)). \qedhere
\end{align*}
\end{proof}

% \begin{proof}[OLD PROOF CONTAINING SOME  PROBLEMS!]
% Take objects $M_1, \dots, M_m$ in $\cD$ such that their classes $v_1, \cdot \cdot \cdot, v_m$ in $\cN(\cD)$ form a basis.
% Put $M_0:=\oplus_{i=1}^{m}M_i$.
% Consider the norm $\|-\|$ on $\cN(\cD)_{\bR}$ given by
% \[ \| v\|:=\sum_{i=1}^{m} |\chi_{\bR}(v_i, v)| \]
% for $v \in \cN(\cD)$.
% The operator norm of $\cN(F)$ is given by 

% \begin{eqnarray*}
% \|\cN(F)_{\bR}\|&=&\sup_{\|v\|=1}\|\cN(F)v\| \\
% &= & \inf\{c\geq0 \mid \|\cN(F)v\| \leq c \|v\| , \forall v \in \cN(\cD)_{\bR} \}
% \end{eqnarray*}
% Since $\{v \in \cN(\cD)_{\bR} \mid \|v\|=1\}$ is compact, there is a real number $C>0$ such that for all $n>0$,
% \[ \sum_{i,j=1}^{m}|\chi_{\bR}(v_i, \cN(F^n)v_j)| \geq C \cdot \| \cN(F^n)_{\bR} \|. \]
% Note that $G \oplus M_0$ is a split generator of $\cD$.
% \begin{eqnarray*}
% h^\pol(F) &=& \limsup_{n\ra\infty} \frac{\log\epsilon_0(G \oplus M_0,F^n(G \oplus M_0))}{\log (n)} \\
% &\geq & \limsup_{n\ra\infty} \frac{\log\epsilon_0(M_0,F^n(M_0))}{\log (n)} \\
% &\geq& \limsup_{n\ra\infty} \frac{\log \sum_{i,j=1}^{m} |\chi_{\bR}(v_i, \cN(F)^nv_j)|}{\log (n)} \\
% &\geq&  \limsup_{n\ra\infty} \frac{\log \|\cN(F)^n \|}{\log (n)}\\
% &=& s (\cN(F)).
% \end{eqnarray*}
% \end{proof}

In general, the inequality in Proposition \ref{prop:GYLowerBoundPolEntropy} can be strict, see Example \ref{ex:GYCouldBeStrict}. We give here an example where the previously established lower bound is achieved. More examples will be presented in Section \ref{sec: Examples}. Recall that an associative algebra is called \textit{hereditary} if its has global dimension at most 1. Important examples of hereditary algebras include semisimple algebras, path algebras of finite quivers without oriented cycles etc. 
\begin{prop}[Hereditary algebras]
\label{prop:hereditaryalgebraGY}
Let $A$ be a hereditary finite dimensional (not necessarily commutative) $\bC$-algebra. 
Then for any autoequivalence $F$ of $\cD^b(A)$, we have 
$h^\pol(F)=s(\cN(F))$.
\end{prop}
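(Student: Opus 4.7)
The plan is to reduce the question to a linear algebra problem about non-negative matrices via an algebraic stability condition, and then to use the autoequivalence hypothesis to pin the growth rate down to $\cN(F)$.

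First I would reduce to polynomial mass growth. Since $A$ is hereditary and finite-dimensional, the heart $\mathcal{A} = \mathrm{mod}\text{-}A$ is finite length with finitely many simples $S_1, \dots, S_r$, and the stability condition $\sigma_0 = (Z_0, \mathcal{A})$ with $Z_0(S_j) = -1$ is algebraic. By Lemma~\ref{lemma:algstabilitycond} and its entropy analogue \cite[Theorem~3.14]{Ikeda}, one has $\ph_{\sigma_0}(F) = \ph(F)$ and $h_{\sigma_0}(F) = h^\cat(F)$. Heredity forces every $E \in \cD^b(A)$ to split as $E \cong \bigoplus_i H^i(E)[-i]$; moreover every module is $\sigma_0$-semistable of the same phase, so a direct computation yields
\[
m_{\sigma_0}(E) = \|\mathbf{d}(E)\|_1, \qquad \mathbf{d}(E) := \sum_i [H^i(E)] \in \bZ_{\geq 0}^r,
\]
where for a module $M$ we identify $[M]$ with its dimension vector in the basis of simples.

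The key technical input is a non-negative matrix majorization. The long exact sequence of cohomology for a distinguished triangle $X \to Y \to Z \xrightarrow{+1}$ yields $[H^i(Y)] \leq [H^i(X)] + [H^i(Z)]$ componentwise in $\bZ_{\geq 0}^r$ for every $i$; summing gives $\mathbf{d}(Y) \leq \mathbf{d}(X) + \mathbf{d}(Z)$. Induction on composition length, together with the splitting above, yields $\mathbf{d}(F(E)) \leq M_F \cdot \mathbf{d}(E)$, where $M_F$ is the non-negative $r \times r$ matrix whose $j$-th column is $\mathbf{d}(F(S_j))$. Iterating gives $m_{\sigma_0}(F^n(G)) \leq C\,\|M_F^n\|$. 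Since $[F(S_j)] = \sum_k (-1)^k [H^k(F(S_j))]$ in $K_0$, the entries of $\cN(F)$ are dominated entrywise in absolute value by those of $M_F$, so $\rho(\cN(F)) \leq \rho(M_F)$ by Perron--Frobenius. Combining with Proposition~\ref{prop:GYLowerBoundPolEntropy},
\[
\log \rho(\cN(F)) \leq h^\cat(F) = h_{\sigma_0}(F) \leq \log \rho(M_F).
\]

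The main obstacle is to close this chain to an equality, and to match Jordan structures at the top so that $s(M_F) = s(\cN(F))$. Once achieved, Proposition~\ref{prop:GYLowerBoundPolEntropy} yields the lower bound $\ph(F) \geq s(\cN(F))$, while the majorization above provides the matching upper bound $\ph(F) = \ph_{\sigma_0}(F) \leq s(M_F) = s(\cN(F))$. The autoequivalence hypothesis is essential here: applying the same majorization to $F^{-1}$ and using $F^{-n} \circ F^n = \mathrm{id}$ gives $\mathbf{d}(G) \leq M_{F^{-1}}^n M_F^n \mathbf{d}(G)$ for every $n$; combined with the reciprocal-eigenvalue symmetry for $\cN(F)$ (which preserves the non-degenerate Euler form on $\cN(\cD)$, so $\rho(\cN(F^{-1})) = \rho(\cN(F))$), this rules out any persistent spectral or Jordan-block gap between $M_F$ and $\cN(F)$. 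For non-invertible endofunctors such a cancellation phenomenon can genuinely occur, which is precisely why the statement requires $F$ to be an autoequivalence.
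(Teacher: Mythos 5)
Your proposal correctly identifies the two bounding ingredients — the Yomdin-type lower bound $\ph(F)\geq s(\cN(F))$ from Proposition~\ref{prop:GYLowerBoundPolEntropy}, and an upper bound via a non-negative majorizing matrix $M_F$ — but it stalls exactly where you flag the ``main obstacle'': you never actually prove $\rho(M_F)=\rho(\cN(F))$ or $s(M_F)=s(\cN(F))$, and the closing argument you offer does not do the job. The step $\mathbf{d}(G)\leq M_{F^{-1}}^n M_F^n\,\mathbf{d}(G)$ is a one-sided majorization that constrains neither the spectral radius nor the Jordan block sizes of $M_F$ relative to $\cN(F)$, and the reciprocal-eigenvalue symmetry of $\cN(F)$ (true, since $F$ preserves the Euler form) says nothing about $M_F$. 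The danger you need to rule out — that the alternating sum $\sum_i(-1)^i[H^i(F(S_j))]$ defining $\cN(F)$ undergoes cancellations that the unsigned sum $\mathbf{d}(F(S_j))=\sum_i[H^i(F(S_j))]$ does not — is a real phenomenon for general objects, and your argument never addresses it.

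The key fact you are missing, and the place the paper actually uses both heredity and the autoequivalence hypothesis, is much more rigid: since $F$ is an autoequivalence, $F^n(P_j)$ is indecomposable whenever $P_j$ is, and in a hereditary derived category every indecomposable object is a \emph{shift of a single module}. Hence $F^n(P_j)$ has cohomology concentrated in one degree, so there is simply no cancellation: $\epsilon(P_i,F^n(P_j))=|\chi(P_i,F^n(P_j))|$ for all $n$, and the Ext-distances track $\|\cN(F)^n\|$ on the nose (after choosing the norm $\|w\|=\sum_i|\chi(v_i,w)|$ dual to the classes $v_i=[P_i]$ of a full strong exceptional collection of projectives). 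This is a pointwise identity, not a chain of inequalities between spectral data of two different matrices, and it immediately yields $\ph(F)\leq s(\cN(F))$; Proposition~\ref{prop:GYLowerBoundPolEntropy} gives the reverse. The paper works directly with $\epsilon(P_i,F^n(P_j))$ and does not need the detour through stability conditions and $M_F$ at all (that route is used instead in Corollary~\ref{cor:hereditaryalgebraGYMass}, after the present proposition is already in hand). If you insert the indecomposability observation, your dimension-vector bookkeeping could be made to work, but as written the proof has a genuine hole precisely at the step you yourself acknowledge is unfinished.
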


\begin{proof}
There are projective $A$-modules $P_1, \cdot \cdot \cdot, P_d$ such that $\left<P_1, \cdot \cdot \cdot, P_d\right>$ is a full strong exceptional collection of $\cD^b(A)$.
Let $v_i\coloneqq [P_i] \in \cN(A)$ for $1 \leq i \leq d$. Then $v_1, \cdot \cdot \cdot, v_d$ is a basis of the numerical Grothendieck group $\cN(A)$ of $D^b(A)$.
Consider the following norm on $\cN(A)_\mathbb{R}$:
$$\|w\|\coloneqq \sum_{i=1}^d |\chi_\mathbb{R}(v_i, w)|.$$
Since $A$ is hereditary, an indecomposable object of $\cD^b(A)$ is isomorphic to a shift of an indecomposable $A$-module.
Note that $F^n(P_i)$ is indecomposable for all $n \geq 1$. Therefore, we have 
\[\epsilon(P_i, F^n(P_j))=|\chi(P_i, F^n(P_j))|\]
for $1 \leq i, j \leq d$. 
By \cite[Proposition 2.14]{KST18}, 
\[ h^\cat(F)=\log \rho(\cN(F)). \]
Let $M:=\max\{\|v_i\| \mid 1 \leq i \leq d \}$.
Note that \[\|\cN(F)^n v\| \leq \|\cN(F)^n\|\cdot\|v\|\]
for any $v\in \cN(A)$.
Therefore, we have 
\begin{align*}
h^\pol(F) &= \limsup_{n\ra\infty} \frac{\log\epsilon(\bigoplus_{i=1}^{d}P_i,F^n(\bigoplus_{j=1}^{d}P_j))-n\log\rho(\cN(F))}{\log (n)} \\
&= \limsup_{n\ra\infty} \frac{\log \sum_{i, j=1}^{d}\epsilon(P_i, F^n(P_j))-n\log\rho(\cN(F))}{\log (n)} \\
&= \limsup_{n\ra\infty} \frac{\log \sum_{i,j=1}^{m} |\chi_{\bR}(v_i, \cN(F)^nv_j)|-n\log\rho(\cN(F))}{\log (n)} \\
&=  \limsup_{n\ra\infty} \frac{\log \sum_{j=1}^{d}\|\cN(F)^nv_j \|-n\log\rho(\cN(F))}{\log (n)}\\
&\leq \limsup_{n\ra\infty} \frac{\log \|\cN(F)^n \|+\log d+\log M-n\log\rho(\cN(F))}{\log (n)}\\
&=\limsup_{n\ra\infty} \frac{\log \|\cN(F)^n \|-n\log\rho(\cN(F))}{\log (n)}\\
&=s(\cN(F)).
\end{align*}
One can deduce the desired equality by combining it with Proposition \ref{prop:GYLowerBoundPolEntropy}.
%Fix $n \geq 1$. 
%Take a positive real number $\varepsilon>0$. Since $\log x$ is continuous, for sufficiently small $\varepsilon'>0$, we have 
%$$\log(\|\cN(F)^n\|+ \varepsilon')<\log(\|\cN(F)^n\|)+ \varepsilon. $$
%By the definition of the operator norm, we have \footnote{\color{red}(Lie) I think we have $\|\cN(F)^n v\| \leq \|\cN(F)^n\|\cdot\|v\|$. If so, the use of $\varepsilon$ and $\varepsilon'$ can be removed.}
%\[\|\cN(F)^n v\| \leq (\|\cN(F)^n\|+\varepsilon')\|v\|  \]
%for all $v \in \cN(A)_{\bR}$.
%Let $M:=\max\{\|v_i\| \mid 1 \leq i \leq d \}$.
%Then we have
%\begin{eqnarray*}
%\frac{\log \sum_{j=1}^{d}\|\cN(F)^n v_j\|}{\log (n)} &\leq& \frac{\log \sum_{j=1}^{d}(\|\cN(F)^n\|+\varepsilon')\|v_j\|}{\log (n)} \\
%&\leq& \frac{\log (\|\cN(F)^n\|+\varepsilon')}{\log (n)} + \frac{\log Md}{\log (n)}\\
%&<& \frac{\log \|\cN(F)^n\|}{\log(n)}+ \frac{\log Md}{\log (n)}+\frac{\varepsilon}{\log (n)}.
%\end{eqnarray*}
%Since $\varepsilon>0$ is arbitrary, we obtain that for any $n$,
%\[\frac{\log \sum_{j=1}^{d}\|\cN(F)^nv_j\|}{\log (n)} \leq \frac{\log \|\cN(F)^n\|}{\log(n)}+ \frac{\log Md}{\log (n)}. \]
%This implies that
%$h^\pol(F) \leq s(\cN(F))$ and we conclude by combining it with Proposition \ref{prop:GYLowerBoundPolEntropy}.
\end{proof}

\subsection{Lower bound for polynomial mass growth}
We establish the analogue of Proposition \ref{prop:GYLowerBoundPolEntropy} for the polynomial mass growth rate (see Section \ref{sec:MassGrowth}) in the presence of Bridgeland stability conditions.

\begin{prop}[Yomdin-type lower bound]\label{prop:GYLowerBoundMassGrowth}
Let $\cD$ be a triangulated category with a split generator $G$. Assume $\cD$ admits a stability condition $\sigma$ that factors through $\cN(\cD)$,  the numerical Grothendieck group of $\cD$. Let $F$ be an endofunctor of $\cD$. Denote  $\cN(F)$ the induced endomorphism on $\cN(\cD)$. If we have 
$h_\sigma(F)= \log \rho(\mathcal{N}(F))$
(for example when $h_\sigma(F)=0$), then
\[ h^\pol_\sigma(F)\geq s(\cN(F)),\]
where $s$ is the polynomial growth rate.\\
More generally, if the central charge of the stability condition $\sigma$ factors through $\mathrm{cl}\colon K_0(\cD)\rightarrow\Gamma$ for some lattice $\Gamma$, and suppose that the homomorphism $\mathrm{cl}$ is surjective and its kernel is preserved by $F$.
Then we have $h^\pol_\sigma(F)\geq s(\Gamma(F))$ provided that $h_\sigma(F)= \rho(\Gamma(F))$, where $\Gamma(F)$ is the induced endomorphism on $\Gamma$.
\end{prop}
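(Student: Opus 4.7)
The plan is to mirror the proof of Proposition~\ref{prop:GYLowerBoundPolEntropy}, with the mass function $m_\sigma$ playing the role of the $\operatorname{Ext}$-distance $\epsilon_t$ and the central charge $Z$ playing the role of the Euler pairing against basis objects. Three ingredients are needed: (a) a mass-to-norm estimate $m_\sigma(E)\geq C\|\mathrm{cl}(E)\|$ for a fixed norm on $\Gamma_\mathbb{R}$, extracted from the support property of $\sigma$; (b) the sup-over-objects characterization of $\ph_\sigma$ established just after its definition; and (c) the surjectivity hypothesis on $\mathrm{cl}\colon K_0(\cD)\to\Gamma$, which lets every lattice point of $\Gamma$ be realized by an actual object (turning negative coefficients into shifts).

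For (a), the support property furnishes a norm $\|\cdot\|$ on $\Gamma_\mathbb{R}$ and a constant $C>0$ with $|Z(\mathrm{cl}(A))|\geq C\|\mathrm{cl}(A)\|$ for every $\sigma$-semistable $A$; summing over the Harder--Narasimhan filtration of an arbitrary $E$ and applying the triangle inequality for $\|\cdot\|$ yields
\[
m_\sigma(E)\;=\;\sum_k |Z(\mathrm{cl}(A_k))|\;\geq\;C\sum_k\|\mathrm{cl}(A_k)\|\;\geq\;C\|\mathrm{cl}(E)\|.
\]
For the construction of the distinguished test object, fix an eigenvalue $\lambda$ of $\Gamma(F)\otimes\mathbb{C}$ with $|\lambda|=\rho(\Gamma(F))$ attaining a Jordan block of the maximal size $s+1$ among eigenvalues of modulus $\rho$, where $s=s(\Gamma(F))$, and pick a Jordan chain $v_0,\dots,v_s\in\Gamma_\mathbb{C}$ with $\Gamma(F)v_0=\lambda v_0$ and $(\Gamma(F)-\lambda I)v_j=v_{j-1}$ for $j\geq 1$. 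By surjectivity of $\mathrm{cl}$, the classes of objects fill the lattice $\Gamma$, which is not contained in the proper $\mathbb{R}$-subspace of $\Gamma_\mathbb{R}$ where the $v_s$-coefficient (in the Jordan basis) vanishes. Choose accordingly a nonzero object $E$ with $\mathrm{cl}(E)=c_sv_s+(\text{lower chain and other blocks})$ and $c_s\neq0$.

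For this $E$ the dominant term of $\Gamma(F)^n\mathrm{cl}(E)$ is $c_s\binom{n}{s}\lambda^{n-s}v_0$, so $\|\Gamma(F)^n\mathrm{cl}(E)\|\geq c\,n^s\rho^n$ for some $c>0$ and all $n\gg0$ (cf.~the proof of Lemma~\ref{lemma:PolGrowLim}). Combining with (a) and the sup formulation of $\ph_\sigma$, one gets
\[
\ph_\sigma(F)\;\geq\;\limsup_n\frac{\log m_\sigma(F^n(E))-n\log\rho}{\log n}\;\geq\;\limsup_n\frac{\log(C\,c\,n^s\rho^n)-n\log\rho}{\log n}\;=\;s,
\]
using the hypothesis $h_\sigma(F)=\log\rho$. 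The special case $\Gamma=\cN(\cD)$ (which is automatically preserved by any triangulated $F$) recovers the first assertion of the proposition. The main obstacle, as in Proposition~\ref{prop:GYLowerBoundPolEntropy}, is the realization step: ensuring that some class of an actual object has a nonzero coefficient along the top of a maximal Jordan chain. Without the surjectivity hypothesis on $\mathrm{cl}$, one would need to work harder here, since classes of objects could in principle all lie inside the bad linear subspace where the top Jordan coefficient vanishes.
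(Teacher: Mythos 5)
Your proof is correct, but it takes a genuinely different route from the paper's. Both arguments reduce to producing a sequence whose mass grows at least like $n^s\rho^n$, but you and the authors handle the potential degeneracy differently. The paper reuses the test object $E=\bigoplus M_i^{\oplus l_i}$ from Proposition~\ref{prop:GYLowerBoundPolEntropy}, bounds $m_\sigma$ from below by $|Z|$ via the chain $m_\sigma(F^n(E))\geq\sum_j|a_j|\,|Z(f^n[M_j])|\geq|Z(f^n(v))|$, and then needs $Z(v_s)\neq 0$ (in your convention, $Z(v_0)\neq 0$) for the top coefficient to register; the authors justify this extra assumption by perturbing $\sigma$ inside $\Stab(\cD)$, using the fact that the central charge is locally arbitrary and invoking Lemma~\ref{lemma:DeformStab} to see that $h_\sigma$ and $\ph_\sigma$ are unchanged. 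You instead bound $m_\sigma$ from below by $C\|\mathrm{cl}(-)\|$ via the support property, and sidestep the central charge entirely: since $\|\cdot\|$ cannot annihilate a nonzero Jordan eigenvector, the only thing you need is a test object whose class has nonzero top Jordan coefficient, and you obtain one directly from the surjectivity of $\mathrm{cl}$ (realizing any class as $[A\oplus B[1]]$). This buys you a cleaner argument that makes no appeal to the deformation lemma or to the dimension of $\Stab(\cD)$, at the modest cost of relying explicitly on the quantitative support property (which the paper anyway assumes throughout, per its footnote). One small imprecision worth flagging: the ``$v_s$-coefficient'' functional is $\mathbb{C}$-linear on $\Gamma_\mathbb{C}$, so its vanishing locus intersected with $\Gamma_\mathbb{R}$ has real codimension $1$ or $2$, not necessarily $1$ as your phrasing suggests; the conclusion that $\Gamma$ is not contained in it still holds since $\Gamma$ spans $\Gamma_\mathbb{C}$ over $\mathbb{C}$, but the wording ``proper $\mathbb{R}$-subspace of $\Gamma_\mathbb{R}$'' should be adjusted accordingly.
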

\begin{proof}
We only prove the case where the central charge $Z$ factors through $\mathcal{N}(\cD)$, the general case is similar. We proceed as in the proof of Proposition \ref{prop:GYLowerBoundPolEntropy}. Keeping the same notation there and using the fact that $m_\sigma(-)\geq |Z(-)|$, we have that for any $n>0$,
\begin{align*}
    m_\sigma(F^n(G\oplus E))
    &\geq m_\sigma(F^n(E))\\
    &= \sum_{j=1}^m l_j m_\sigma(F^n(M_j))\\
    &\geq \sum_{j=1}^m |a_j|   m_\sigma(F^n(M_j))\\
    &\geq \sum_{j=1}^m |a_j|\cdot | Z(F^n(M_j))|\\
    &=\sum_{j=1}^m|a_j|\cdot |Z(f^n([M_j]))|\\
    &\geq |Z(f^n(v))|\\
    &= \left|Z\left(\lambda^nv_0+\binom{n}{1}\lambda^{n-1}v_1+\cdots+\binom{n}{s}\lambda^{n-s}v_s\right)\right|.
\end{align*}
If $h_\sigma(F)=\log|\lambda|$, assume moreover that $Z(v_s)\neq 0$, since $G\oplus E$ is a split generator, we have the following, by Lemma \ref{lemm:EntropyInExt-distance},
\begin{align*}
h^\pol_\sigma(F)&=\limsup_{n\to \infty}\frac{\log m_\sigma(F^n(G\oplus E))-nh_\sigma(F)}{\log(n)}\\
&\geq \limsup_{n\to \infty}\frac{\log\left|Z\left(\lambda^nv_0+\binom{n}{1}\lambda^{n-1}v_1+\cdots+\binom{n}{s}\lambda^{n-s}v_s\right)\right|-n\log|\lambda|}{\log(n)}\\&=s\\&=s(\mathcal{N}(F)). 
\end{align*}
The condition $Z(v_s)\neq 0$ can always be achieved by deforming the stability condition $\sigma$ (note that $\dim \Stab(\cD)=\dim \mathcal{N}(F)$), and this does not affect $h_\sigma(F)$ or $\ph_\sigma(F)$, thanks  to Lemma \ref{lemma:DeformStab}.
\end{proof}

\begin{cor}\label{cor:hereditaryalgebraGYMass}
In the situation of Proposition \ref{prop:hereditaryalgebraGY}, if moreover $\cD$ admits a numerical stability condition $\sigma$, then we have \[h^\pol_{\sigma}(F)=s(\cN(F)).\]
\end{cor}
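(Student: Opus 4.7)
The plan is to sandwich $h^\pol_\sigma(F)$ between matching upper and lower bounds, both of which come from results already in the excerpt. The lower bound will come directly from Proposition \ref{prop:GYLowerBoundMassGrowth}, while the upper bound will be obtained by combining Lemma \ref{lemma:ComparingEntropyMass} with the equality $h^\pol(F) = s(\cN(F))$ already proved in Proposition \ref{prop:hereditaryalgebraGY}.

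The key intermediate fact I would first establish is that the mass growth agrees with the categorical entropy at $t=0$, i.e.\ $h_\sigma(F) = h^\cat(F)$. On the one hand, Ikeda's inequality $h_{\sigma,t}(F) \leq h_t(F)$ (recalled just before Lemma \ref{lemma:ComparingEntropyMass}) gives $h_\sigma(F) \leq h^\cat(F)$. On the other hand, since $A$ is hereditary, the computation in the proof of Proposition \ref{prop:hereditaryalgebraGY} shows via \cite[Proposition 2.14]{KST18} that $h^\cat(F) = \log\rho(\cN(F))$, and the Gromov--Yomdin-type lower bound \cite[Theorem 1.2]{Ikeda} gives $h_\sigma(F) \geq \log\rho(\cN(F))$. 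Putting these together yields the desired equality $h_\sigma(F) = h^\cat(F) = \log\rho(\cN(F))$.

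With this equality in hand, Lemma \ref{lemma:ComparingEntropyMass} applied at $t=0$ gives $h^\pol_\sigma(F) \leq h^\pol(F)$, and Proposition \ref{prop:hereditaryalgebraGY} identifies the right-hand side with $s(\cN(F))$. For the reverse inequality, the equality $h_\sigma(F) = \log \rho(\cN(F))$ is precisely the hypothesis needed to invoke Proposition \ref{prop:GYLowerBoundMassGrowth}, which then yields $h^\pol_\sigma(F) \geq s(\cN(F))$. Combining both bounds gives the claimed equality.

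There is no serious obstacle here, since everything reduces to assembling results already available in the paper; the only point requiring a moment of care is verifying that the hypothesis of Lemma \ref{lemma:ComparingEntropyMass} is met at $t=0$, and this is where the hereditary assumption genuinely enters (via \cite[Proposition 2.14]{KST18}) to pin down $h^\cat(F)$ exactly rather than merely bounding it.
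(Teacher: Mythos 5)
Your proposal is correct and follows essentially the same route as the paper: establish $h_\sigma(F) = h^\cat(F) = \log\rho(\cN(F))$ via \cite[Proposition 2.14]{KST18} and \cite[Theorem 1.2]{Ikeda}, then combine Lemma \ref{lemma:ComparingEntropyMass}, Proposition \ref{prop:GYLowerBoundMassGrowth}, and Proposition \ref{prop:hereditaryalgebraGY} to sandwich $\ph_\sigma(F)$ between $s(\cN(F))$ on both sides. Your unpacking of the intermediate equality (using Ikeda's inequality $h_\sigma \leq h^\cat$ explicitly alongside the two cited lower bounds) is slightly more detailed than the paper's terse citation, but it is the same argument.
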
 
\begin{proof}
By \cite[Proposition 2.14]{KST18} and \cite[Theorem 1.2]{Ikeda}, we have that 
\[ h^\cat(F)=h_{\sigma}(F)=\log \rho(\cN(F)). \]
Therefore, Lemma \ref{lemma:ComparingEntropyMass} and Proposition \ref{prop:GYLowerBoundMassGrowth} imply that 
\[\ph(F)\geq \ph_{\sigma}(F)\geq s(\mathcal{N}(F)).\] Then Proposition \ref{prop:hereditaryalgebraGY} allows us to conclude the proof.
\end{proof}

\section{Classical dynamical systems: a categorical retake}
In this section, we make a connection of the categorical theory developed so far to the classical setting.
Let $\k$ be an algebraically closed field and let $X$ be a smooth projective $\k$-variety endowed with a surjective (regular) endomorphism $f\colon X\to X$. Note that $f$ is automatically finite and flat. There has been extensive study of the complexity of the system $(X, f)$ by topological, geometric, algebraic, analytic and even probabilistic approaches. We employ here a categorical method (as in \cite{DHKK}, \cite{KiTa}) by looking at the naturally associated categorical dynamical system $(\cD^b(X), \mathbb{L}f^*)$.

\subsection{Dynamical degrees}\label{sec:DynamicalDegree}
We recall some basic properties of a series of fundamental invariants, called dynamical degrees, of an algebraic/complex dynamical system. The material here is well-established in the literature.  
We are in the following more broad setting:
\begin{itemize}
    \item $f$ is a dominant rational self-map of a normal projective variety $X$ defined over $\k$, or 
    \item when $\k=\mathbb{C}$, $f$ is a dominant meromorphic self-map of a compact K\"ahler manifold $X$.
\end{itemize}

%Let $X$ be a projective finite-dimensional complex manifold and $f: X\to X$ a surjective (holomorphic) endomorphism\footnote{In particular, $f$ is a finite and flat morphism.}. 
Let $f\colon X\dashrightarrow X$ be as above. Denote by $d$ the dimension of $X$. For any integer $0\leq p\leq d$, the $p$-th \textit{dynamical degree} of $f$, denoted by $d_p(f)$, is by definition
\[d_p(f)\coloneqq \lim_{n\to \infty} \left(\int_X (f^n)^*\omega^p\wedge \omega^{d-p}\right)^{\frac{1}{n}},\]
where $\omega$ is the first Chern class of an ample line bundle (or a K\"ahler class in the compact K\"ahler setting). The definition is independent of the choice of $\omega$ and it is invariant under birational conjugation. The existence of the limit is due to Dinh--Sibony \cite{DinhSibonyAnnals, DinhSibonyENS} when $\k=\mathbb{C}$, and to Truong \cite{TTTruong} (see Dang \cite{Dang} for an alternative treatment) when $\operatorname{char}(\k)$ is arbitrary.

Let $N^p\coloneqq N^p(X)$ be the group of algebraic cycles of codimension $p$ on $X$ modulo numerical equivalence. For a rational self-map $g$ of $X$, we denote by $g^*_{N^p}$ the induced endomorphism on $N^p(X)$. Thanks to \cite[Theorem 2]{Dang}, we have
\[d_p(f)=\lim_{n\to \infty}\|(f^n)^*_{N^p}\|^{\frac{1}{n}},\]  which is equal to $\rho(f^*_{N^p})$ when $f$ is regular, where $\rho$ denotes the spectral radius.

Similarly, in the compact K\"ahler setting, we have 
\[d_p(f)=\lim_{n\to \infty}\|(f^n)^*_{H^{p,p}}\|^{\frac{1}{n}},\]  which is equal to $\rho(f^*_{H^{p,p}})$ when $f$ is holomorphic (or more generally, algebraically stable), where $H^{p,p}$ is the $(p,p)$-part in the Hodge decomposition of $H^{2p}(X, \mathbb{C})$.

By the Teisser--Khovanskii inequality (see \cite{GromovConvexSets}), the sequence $\{d_p(f)\}_{p=0}^d$ is log-concave (\cite{DinhSibonyENS, TTTruong, Dang}). When $\k=\mathbb{C}$, a celebrated theorem of Gromov--Yomdin \cite{Gromov, Yomdin} (cf.~Theorem \ref{thm:GromovYomdin}) says that when $f$ is holomorphic (and surjective), the topological entropy can be computed from the dynamical degrees as well as the action on cohomology:
$$h_{\operatorname{top}}(f)=\max_{0\leq p\leq d} \log(d_p(f))=\log \lim_{n\to \infty}\|(f^n)^*_{H^*}\|^{\frac{1}{n}}=\log \rho(f^*_{H^*}).$$
As a consequence, $f$ has positive topological entropy if and only if $d_1(f)>1$.

\subsection{Polynomial dynamical degrees}\label{sec:PolDynamicalDegree}
Keep  the same setting as in Section \ref{sec:DynamicalDegree}. Consider $f\colon X\dashrightarrow X$ as before.
In the K\"ahler situation, under the hypothesis that the topological entropy of $f$ is zero, Lo Bianco \cite[Section 1.3]{LoB} initiated the study of the so-called \textit{polynomial dynamical degrees}, the polynomial counterpart of dynamical degrees discussed in Section \ref{sec:DynamicalDegree}, for holomorphic endomorphisms, which was later extended by Cantat--Paris-Romaskevich \cite[Section 3]{CanPR} to meromorphic self-maps. 

Using the idea of normalizing the exponential growth as in Definition \ref{def:polygrowthratelinearmap}, we generalize this notion of polynomial dynamical degrees to rational/meromorphic self-maps of arbitrary entropy over arbitrary algebraically closed base field $\k$:
\begin{defn}[Polynomial dynamical degree]\label{def:PolDynDeg}
Let $f\colon X\dashrightarrow X$ be as above.
For any integer $0\leq p\leq d$, the $p$-th \textit{polynomial dynamical degree}, denoted by $s_p(f)$, is by definition
\[s_p(f)\coloneqq \limsup_{n\to \infty}\frac{\log(\int_X (f^n)^*\omega^p\wedge \omega^{d-p})-n\log d_p(f)}{\log (n)},\]
where $\omega$ is the first Chern class of an ample line bundle (or a K\"ahler class when $\k=\mathbb{C}$) and $d_p(f)$ is the $p$-th dynamical degree of $f$. We will see shortly in Proposition \ref{prop:PolDynDegLimit} that if $f$ is regular/holomorphic, then the $\limsup$ is actually a limit, and takes values in $\mathbb{N}$, the set of non-negative integers.
\end{defn}
\begin{rmk}
By \cite[Theorem 1 (ii)]{Dang}, the definition is independent of the choice of $\omega$, and it is invariant under birational conjugation. Thanks to \cite[Theorem 2]{Dang}, for any $0\leq p\leq d$,  we have \[s_p(f)=\limsup_{n\to \infty}\frac{\log\|(f^n)^*_{N^p}\|-n\log d_p(f)}{\log (n)},\]
where $N^p\coloneqq N^p(X)$ is the group of codimension-$p$ cycles modulo numerical equivalence.
Similarly, in the compact K\"ahler setting, 
\[s_p(f)=\limsup_{n\to \infty}\frac{\log\|(f^n)^*_{H^{p,p}}\|-n\log d_p(f)}{\log (n)}.\]
We do not know whether the $\limsup$ is a limit in general.
\end{rmk}

Recall the notion of polynomial growth rate in linear algebra in Definition \ref{def:polygrowthratelinearmap}.
\begin{prop}\label{prop:PolDynDegLimit}
Notation is as above. If $f$ is moreover regular, then for any $0\leq p\leq d$, the $\limsup$ in Definition \ref{def:PolDynDeg} is a limit, and 
\[s_p(f)=s(f^*_{N^p})=\lim_{n\to \infty}\frac{\log\|(f^n)^*_{N^p}\|-n\log d_p(f)}{\log (n)} \in \mathbb{N}.\]
Similarly, in the compact K\"ahler setting, if $f$ is moreover holomorphic, then 
\[s_p(f)=s(f^*_{H^{p,p}})=\lim_{n\to \infty}\frac{\log\|(f^n)^*_{H^{p,p}}\|-n\log d_p(f)}{\log (n)} \in \mathbb{N}.\]
\end{prop}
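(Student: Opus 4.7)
The plan is to reduce the assertion to the linear-algebra statement of Lemma \ref{lemma:PolGrowLim} applied to the operator $\psi := f^*|_{N^p}$ (respectively $f^*|_{H^{p,p}}$ in the K\"ahler setting), together with a positivity argument showing that the specific intersection numbers $I_n := \int_X (f^n)^*\omega^p\wedge\omega^{d-p}$ capture the full operator-norm growth of $\psi^n$. Since $f$ is regular (resp.\ holomorphic), pullback is functorial on numerical (resp.\ Dolbeault) classes, so $(f^n)^* = \psi^n$, and by Section \ref{sec:DynamicalDegree} we have $d_p(f) = \rho(\psi)$. Lemma \ref{lemma:PolGrowLim} already gives the actual limit
\[
\lim_{n\to\infty}\frac{\log\|\psi^n\|-n\log\rho(\psi)}{\log n}=s(\psi)\in\mathbb{N},
\]
so the task reduces to showing $I_n\asymp\|\psi^n\|$ up to multiplicative constants independent of $n$; the upper bound $I_n\leq C\|\psi^n\|$ is immediate from bilinearity of the intersection pairing.

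For the matching lower bound, the plan is to exploit a $\psi$-invariant closed convex cone $K\subset N^p(X)_{\mathbb{R}}$ that contains $\omega^p$ in its interior and on which the linear functional $\ell(\alpha):=\int_X\alpha\wedge\omega^{d-p}$ dominates the norm, i.e., $\ell(u)\geq c\|u\|$ for all $u\in K$ and some $c>0$. Concretely I would take $K$ to be the closure of the pseudo-effective cone of codimension-$p$ cycles (in the K\"ahler setting, the closed cone of positive real $(p,p)$-classes in $H^{p,p}(X,\mathbb{R})$). The three ingredients then follow from classical positivity: $\psi$-invariance from the fact that $f^*$ sends effective/positive classes to effective/positive classes when $f$ is regular/holomorphic; $\omega^p$ being interior from ampleness of $\omega$; and the domination $\ell\geq c\|\cdot\|$ from strict positivity of $\alpha\mapsto\alpha\cdot\omega^{d-p}$ on $K\setminus\{0\}$ via Khovanskii-Teissier / Hodge-theoretic inequalities, which forces a compact affine level set of $\ell$ inside $K$.

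With $K$ in hand, pick $\delta>0$ so that the norm ball $B(\omega^p,\delta)$ is contained in $K$. For any $v$ with $\|v\|\leq\delta$, both $\omega^p\pm v$ lie in $K$, so applying $\ell\geq c\|\cdot\|$ to $\psi^n(\omega^p\pm v)$ yields
\[
\ell(\psi^n\omega^p)\pm\ell(\psi^nv)\geq c\,\|\psi^n(\omega^p\pm v)\|.
\]
Adding the two inequalities and using $\|\psi^n(\omega^p+v)\|+\|\psi^n(\omega^p-v)\|\geq 2\|\psi^nv\|$ yields $\ell(\psi^n\omega^p)\geq c\,\|\psi^nv\|$ for every such $v$; taking the supremum over $\|v\|\leq\delta$ delivers $I_n=\ell(\psi^n\omega^p)\geq c\delta\,\|\psi^n\|$. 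Combined with the upper bound, this promotes the $\limsup$ in Definition \ref{def:PolDynDeg} to a genuine limit equal to $s(\psi)\in\mathbb{N}$.

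The hard part is supplying the positivity input for $p>1$: verifying that $\omega^p$ really lies in the interior of the chosen positive cone and that $\alpha\mapsto\alpha\cdot\omega^{d-p}$ is strictly positive on this cone. For $p=1$ both are classical facts about the nef cone and ampleness, but for general $p$ they require the higher-codimensional Khovanskii-Teissier inequalities (or, in the K\"ahler setting, Hodge-Riemann-type positivity for real $(p,p)$-forms). Once this positivity is in place, the rest of the argument is formal, and the K\"ahler case is handled identically after replacing $N^p$ by $H^{p,p}$.
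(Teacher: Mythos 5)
Your overall strategy is the same as the paper's: reduce everything to the comparison $I_n \asymp \|\psi^n\|$ (with multiplicative constants independent of $n$), divide by $\log n$, and invoke Lemma~\ref{lemma:PolGrowLim} for the linear-algebra limit and its integrality. The ``ball in the cone'' lower-bound argument you sketch is also formally valid once the three cone properties are granted, and your upper bound from bilinearity is correct.

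The difference is that the paper obtains the key comparison in one line by citing \cite[Theorem~2]{Dang}, which yields the additive bound $\bigl|\log I_n - \log\|(f^n)^*_{N^p}\|\bigr| \le C(X)$ directly, whereas you try to re-derive the lower bound from scratch via a positivity argument with the pseudo-effective cone $\overline{\mathrm{Eff}^p}$. That is where the genuine gap lies, and you correctly flag it yourself: for $p\ge 2$ it is not at all elementary (and not obviously true) that $\omega^p$ lies in the interior of $\overline{\mathrm{Eff}^p}$, nor that $\alpha\mapsto\alpha\cdot\omega^{d-p}$ is strictly positive on $\overline{\mathrm{Eff}^p}\setminus\{0\}$. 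These are exactly the facts that Dang's theorem encapsulates, and his proof does \emph{not} use the naive pseudo-effective cone; it works with a more carefully constructed positive cone and a Siu-type inequality. Moreover, the proposition is stated over an arbitrary algebraically closed field, so in the algebraic case one cannot appeal to ``Hodge-Riemann-type positivity'' as you suggest; that tool is only available in the compact K\"ahler branch. As written, the positivity input that makes your lower bound go through is not supplied, so the argument is incomplete --- the safest fix is to do what the paper does and simply cite Dang for the two-sided comparison.
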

\begin{proof}
By \cite[Theorem 2]{Dang}, $|\log(\int_X (f^n)^*\omega^p\wedge \omega^{d-p})-\log\|(f^n)^*_{N^p}\||$ is bounded by some universal constant depending only on $X$. Since the sequence $$\frac{\log\|(f^n)^*_{N^p}\|-n\log d_p(f)}{\log (n)}=\frac{\log\|(f^*_{N^p})^n\|-n\log \rho(f^*_{N^p})}{\log (n)}$$ is convergent to a natural number by Lemma \ref{lemma:PolGrowLim}, the sequence in  Definition \ref{def:PolDynDeg} is also convergent with the same limit.
\end{proof}

In the sequel when discussing the concavity properties, we have to stay in the following more restrictive setting:
\begin{itemize}
    \item $f$ is a surjective (regular) endomorphism of a smooth projective variety $X$ defined over $\k$, or
    \item $f$ is a surjective holomorphic endomorphism of a compact K\"ahler manifold $X$.
\end{itemize}

Thanks to the log-concavity of the sequence $\{d_p(f)\}$, there exist integers $0\leq p_0\leq p_1\leq d$, such that $d_0<\dots <d_{p_0-1}< d_{p_0}=\dots=d_{p_1}>d_{p_1+1}>\dots $. The following results extend Lo Bianco's observation \cite[Proposition 1.3.9]{LoB}.
\begin{lemma}[Concavity]\label{lemma:ConcavityS}
Let $f\colon X\to X$ be as above.
The sequence $\{s_p(f)\}_{p=p_0}^{p_1}$ is concave.
\end{lemma}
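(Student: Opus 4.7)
The plan is to deduce the concavity of $\{s_p(f)\}_{p=p_0}^{p_1}$ from the classical Khovanskii--Teissier (mixed Hodge index) inequality applied to the iterates $f^n$. Set $a_p(n) \coloneqq \int_X (f^n)^\ast \omega^p \wedge \omega^{d-p}$. Since $f$ is surjective regular (resp.\ holomorphic) and $\omega$ is ample (resp.\ K\"ahler), the class $(f^n)^\ast \omega$ is nef (resp.\ represented by a semi-positive $(1,1)$-form), so Khovanskii--Teissier applies and yields, for every $1 \leq p \leq d-1$,
\[
a_p(n)^2 \;\geq\; a_{p-1}(n)\cdot a_{p+1}(n).
\]

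Taking logarithms turns this into the additive inequality $2\log a_p(n) \geq \log a_{p-1}(n) + \log a_{p+1}(n)$. Now I restrict to the plateau range $p_0 < p < p_1$, where by definition $d_{p-1}(f) = d_p(f) = d_{p+1}(f)$, so that $2n\log d_p(f) = n\log d_{p-1}(f) + n\log d_{p+1}(f)$. Subtracting this identity from the previous inequality, I obtain
\[
2\bigl(\log a_p(n) - n\log d_p(f)\bigr) \;\geq\; \bigl(\log a_{p-1}(n) - n\log d_{p-1}(f)\bigr) + \bigl(\log a_{p+1}(n) - n\log d_{p+1}(f)\bigr).
\]

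The last step is to divide by $\log(n)$ and pass to the limit. In general $\limsup$ is not additive, which would be the main subtlety; however, in the present setting $f$ is a surjective regular or holomorphic endomorphism, so Proposition~\ref{prop:PolDynDegLimit} ensures that each of the three quantities above, divided by $\log(n)$, actually converges (to $s_{p-1}(f)$, $s_p(f)$, $s_{p+1}(f)$ respectively, up to the $O(1)$ discrepancy between $\log a_p(n)$ and $\log\|(f^n)^\ast_{N^p}\|$ controlled by \cite[Theorem~2]{Dang}, which becomes negligible after dividing by $\log(n)$). Passing to the limit then gives $2s_p(f) \geq s_{p-1}(f) + s_{p+1}(f)$ for every $p_0 < p < p_1$, which is precisely the concavity statement. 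The same argument works verbatim in the compact K\"ahler case by replacing $N^p$ with $H^{p,p}$.
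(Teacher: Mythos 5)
Your proof is correct and follows essentially the same route as the paper: both invoke the Khovanskii--Teissier inequality to get log-concavity of $p\mapsto\int_X(f^n)^*\omega^p\wedge\omega^{d-p}$ for each fixed $n$, use that $d_p(f)$ is constant on the plateau $[p_0,p_1]$ so subtracting $n\log d_p(f)$ preserves concavity, and then divide by $\log(n)$ and pass to the limit, relying on the regularity/holomorphy hypothesis (via Proposition~\ref{prop:PolDynDegLimit}) to ensure the $\limsup$ is an honest limit. You simply make explicit the midpoint form of the inequality and the citation of Proposition~\ref{prop:PolDynDegLimit}, where the paper states the concavity and the limit step more tersely.
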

\begin{proof}
Let $\lambda=d_{p_0}(f)=\dots=d_{p_1}(f)$.
By the Teisser--Khovanskii inequality (\cite{GromovConvexSets}), for any $n>0$, $$p\mapsto \log(\int_X (f^n)^*\omega^p\wedge \omega^{d-p})-n \log d_p(f)=\log(\int_X (f^n)^*\omega^p\wedge \omega^{d-p})-n \log \lambda$$
is a concave sequence for $p_0\leq p\leq p_1$. One concludes by dividing by $\log(n)$ and taking limit. Note that the argument does not apply to the more general case where $f$ is only assumed to be rational/meromorphic, as $\limsup$ does not preserve the concavity. 
\end{proof}

\begin{prop}\label{prop:PolDynDegCohomology}
Notation is as before (in particular, $f$ is regular). Let $N^*\coloneqq \bigoplus_pN^p(X)$. We have 
\[s(f^*_{N^*})=\max_{p_0\leq p \leq p_1}s_p(f).\]
Similarly, in the compact K\"ahler setting, 
\[s(f^*_{H^*})=\max_{p_0\leq p \leq p_1}s_p(f).\]
\end{prop}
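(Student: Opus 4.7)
The plan is to reduce the claim to an elementary statement about block-diagonal endomorphisms, using what Proposition~\ref{prop:PolDynDegLimit} already provides on each graded piece, together with the log-concavity of the sequence of dynamical degrees, which singles out the plateau $p_0\leq p\leq p_1$ where the spectral radius of $f^*_{N^*}$ is attained.

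First I would note that since $f$ is regular, the pullback $f^*$ preserves the grading of $N^*(X)$ by codimension, hence $f^*_{N^*}=\bigoplus_{p=0}^d f^*_{N^p}$ as linear endomorphisms. Equipping $N^*$ with the maximum of any chosen norms on the graded pieces gives $\|(f^n)^*_{N^*}\|=\max_p \|(f^n)^*_{N^p}\|$, and since all norms on a finite-dimensional space are equivalent, this choice does not affect $s(\cdot)$. From Proposition~\ref{prop:PolDynDegLimit} one has, for each $p$,
\[
\log\|(f^n)^*_{N^p}\| = n\log d_p(f) + s_p(f)\log(n) + O(1).
\]

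Next I would invoke the log-concavity of $\{d_p(f)\}_{p=0}^d$ recalled just before Lemma~\ref{lemma:ConcavityS}: the maximum $\lambda := \max_p d_p(f) = \rho(f^*_{N^*})$ is attained exactly on $[p_0,p_1]$, while for $p\notin [p_0,p_1]$ one has $d_p(f)<\lambda$, so the contribution $d_p(f)^n n^{s_p(f)}$ becomes negligible compared to $\lambda^n$ for $n$ large. Combining with the previous estimate,
\[
\|(f^n)^*_{N^*}\| \;\asymp\; \lambda^n\cdot \max_{p_0\leq p\leq p_1} n^{s_p(f)} \;=\; \lambda^n\cdot n^{\max_{p_0\leq p\leq p_1} s_p(f)}.
\]
Taking logarithms, subtracting $n\log\lambda$, dividing by $\log(n)$, and passing to the limit then yields, by Definition~\ref{def:polygrowthratelinearmap},
\[
s(f^*_{N^*}) = \max_{p_0\leq p\leq p_1} s_p(f),
\]
as required. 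The compact K\"ahler case is identical with $N^p$ replaced by $H^{p,p}$.

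No real obstacle is expected here: the only content of the proof is the graded decomposition, the log-concavity of $\{d_p(f)\}$ to localize the spectral radius of $f^*_{N^*}$ on the plateau, and the linear-algebra identification $s_p(f)=s(f^*_{N^p})$ already established in Proposition~\ref{prop:PolDynDegLimit}. The only mild bookkeeping point is to confirm that the asymptotic $\asymp$ displayed above — taken with different constants $O(1)$ for each $p$ — is robust enough to survive division by $\log(n)$, which is immediate since finitely many $O(1)$ terms contribute nothing in the limit.
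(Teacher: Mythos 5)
Your argument for the numerical Chow group $N^*(X)=\bigoplus_p N^p(X)$ is essentially the paper's and is fine: $f^*$ preserves the codimension grading, the maximum norm makes $s(f^*_{N^*})$ the maximum of the $s(f^*_{N^p})$ over the plateau where the spectral radius is attained, and each such $s(f^*_{N^p})$ equals $s_p(f)$ by Proposition~\ref{prop:PolDynDegLimit}.

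However, the final sentence ``the compact K\"ahler case is identical with $N^p$ replaced by $H^{p,p}$'' is a genuine gap. The full cohomology $H^*(X,\bC)$ is \emph{not} $\bigoplus_p H^{p,p}$: by the Hodge decomposition it is $\bigoplus_{i,j} H^{i,j}$, and $f^*$ acts on every bidegree, not only the diagonal ones. The polynomial dynamical degrees $s_p(f)$ only control the diagonal pieces $H^{p,p}$, so your graded-maximum argument yields only $s\bigl(f^*_{\bigoplus_p H^{p,p}}\bigr)=\max_{p_0\leq p\leq p_1} s_p(f)$; a priori an off-diagonal piece $H^{i,j}$ with $i\neq j$ could contribute a larger Jordan block at the maximal spectral radius. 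This is exactly why the paper devotes the bulk of the proof to the K\"ahler case: one first uses $\rho(f^*_{H^{i,j}})\leq\sqrt{d_i(f)d_j(f)}$ (Dinh) to confine the problematic $(i,j)$ to $p_0\leq i,j\leq p_1$, then bounds $\|(f^n)^*_{H^{i,j}}\|^2$ by $\|(f^n,f^n)^*_{H^{i+j,i+j}}\|$ on $X\times X$, computes the polynomial dynamical degree $s_{i+j}(f,f)$ of the product map using the K\"ahler class $\pi_1^*\omega+\pi_2^*\omega$, and finally invokes the concavity of $\{s_p(f)\}_{p_0}^{p_1}$ (Lemma~\ref{lemma:ConcavityS}) to conclude $s(f^*_{H^{i,j}})\leq\frac12 s_{i+j}(f,f)\leq\max_{p_0\leq p\leq p_1}s_p(f)$. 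Without some control of the off-diagonal Hodge pieces, the K\"ahler half of the proposition is not proved.
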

\begin{proof}
Let $\lambda=\max_p d_p(f)$.
The case for $N^*$ is almost immediate:
\[s(f^*_{N^*})=\lim_{n\to \infty}\frac{\log\|(f^*_{N^*})^n\|-n\log \lambda}{\log n}=\lim_{n\to \infty}\frac{\log\|(f^*_{\oplus_{p_0\leq p\leq p_1} N^p})^n\|-n\log \lambda}{\log n}=\max_{p_0\leq p \leq p_1}s(f^*_{N^p})=\max_{p_0\leq p \leq p_1}s_p(f).\]
In the compact K\"ahler  situation, we need to show in addition that for any $0\leq i, j\leq d$, if $\rho(f^*_{H^{i, j}})=\lambda$, then $s(f^*_{H^{i, j}})\leq \max_{p_0\leq p \leq p_1}s_p(f)$. Given such a couple $(i,j)$ with $\rho(f^*_{H^{i, j}})=\lambda$, as it is shown in \cite[Proposition 5.8]{DinhJGA} that $\rho(f^*_{H^{i,j}})\leq \sqrt{d_i(f)d_j(f)}$, we must have that $p_0\leq i, j\leq p_1$. Let $(f, f)$ be the endomorphism of $X\times X$  sending $(x, x')$ to $(f(x), f(x'))$, then it is easy to see (cf.~loc.~cit.) that there exists a constant $C>0$ such that for any $n>0$,
\begin{equation}\label{eqn:ffControlf}
    \|(f^n)^*_{H^{i,j}}\|^2\leq C\|(f^n, f^n)^*_{H^{i+j, i+j}}\|.
\end{equation}

Since $\rho(f^*_{H^{i,j}})=\lambda$ by assumption, \eqref{eqn:ffControlf} implies that 
\[d_{i+j}(f,f)\geq \lambda^2.\]
Let us show that this is actually an equality and compute the corresponding polynomial dynamical degree of $(f,f)$.
For any K\"ahler form $\omega$ on $X$, $\pi_1^*\omega+\pi_2^*\omega$ is a K\"ahler form on $X\times X$, where $\pi_1$ and $\pi_2$ are the natural projections from $X\times X$ to $X$. Let $C_{i,j,l}\coloneqq  {{i+j}\choose{l}}{{2d-i-j}\choose{2d-l}}$, then 
\begin{align*}
    &\int_{X\times X}(f^n,f^n)^*(\pi_1^*\omega+\pi_2^*\omega)^{i+j}\wedge (\pi_1^*\omega+\pi_2^*\omega)^{2d-i-j}\\
    =& \sum_{l=0}^{i+j}C_{i,j,l}\left(\int_X(f^n)^*\omega^l\wedge\omega^{d-l}\cdot\int_X(f^n)^*\omega^{i+j-l}\wedge\omega^{d+l-i-j}\right),
   % =& \sum_{l=0}^{i+j}O(d_l(f)^n n^{s_l(f)} d_{i+j-l}(f)^n n^{s_{i+j-l}(f)})\\
    %=& \sum_{\substack{p_0\leq l\leq p_1\\p_0\leq i+j-l\leq p_1}}O(\lambda^{2n} n^{s_l(f)+s_{i+j-l}(f)})
\end{align*}
where the $l$-th term has growth in $n$ equivalent to $d_l(f)^n n^{s_l(f)} d_{i+j-l}(f)^n n^{s_{i+j-l}(f)}$, hence the sum has growth equivalent to $\lambda^{2n} n^{s}$,
with $$s=\max_{\substack{p_0\leq l\leq p_1\\p_0\leq i+j-l\leq p_1}}(s_l(f)+s_{i+j-l}(f)).$$
Therefore $d_{i+j}(f,f)=\lambda^2$ and 
\[s_{i+j}(f,f)= \max_{\substack{p_0\leq l\leq p_1\\p_0\leq i+j-l\leq p_1}}(s_l(f)+s_{i+j-l}(f)).\]
By the concavity of the sequence $s_{p_0}, \dots, s_{p_1}$ (Lemma \ref{lemma:ConcavityS}), if $i+j=2p$ is even, then $s_{i+j}(f,f)=2s_p(f)$; if $i+j=2p+1$ is odd, then $s_{i+j}(f,f)=s_p(f)+s_{p+1}(f)$. In any case, $s_{i+j}(f)\leq 2\max_{p_0\leq p \leq p_1}s_p(f)$.

%On the other hand, by the K\"unneth formula, 
%\[d_{i+j}(f,f)\leq \max_{l} d_l(f)d_{i+j-l}(f)\leq \lambda^2.\]
Now use again \eqref{eqn:ffControlf}, we obtain that 
\begin{align*}
s(f^*_{H^{i,j}})
&=\lim_{n\to \infty}\frac{\log \|(f^n)^*_{H^{i,j}}\|- n\log(\lambda)}{\log(n)}\\
&\leq \lim_{n\to \infty}\frac{\log(C)+\log \|(f^n, f^n)^*_{H^{i+j, i+j}}\| - n\log(\lambda^2)}{2\log(n)}\\
&= \frac{1}{2}s_{i+j}(f,f)\\
&\leq \max_{p_0\leq p \leq p_1}s_p(f).\qedhere
\end{align*}
\end{proof}

\subsection{Using derived categories}
Let $f\colon X\to X$ be a surjective endomorphism of a smooth projective variety $X$ defined over $\k$.
Let $\cD^b(X)$ be the bounded derived  category of coherent sheaves on $X$.  Let  $\mathbb{L}f^* \colon \cD^b(X)\to \cD^b(X)$ be the derived pullback functor. As $f$ is flat, $\mathbb{L}f^*=f^*$, but we will continue writing $\mathbb{L}f^*$ in the sequel to remind that it is a functor and to avoid confusion with the induced action on cohomology.  When  $\k=\mathbb{C}$, Kikuta--Takahashi \cite{KiTa} showed that its categorical entropy coincides with the topological entropy of $f$:
\begin{equation}
    h^\cat(\mathbb{L}f^*)=h_{\operatorname{top}}(f).
\end{equation}
This result was later extended over any algebraically closed field by Ouchi \cite[Theorem 5.2]{Ouchi}. Let us state their results as follows, incorporating also the discussion in Section \ref{sec:DynamicalDegree} as well as some contribution from Ikeda \cite{Ikeda}.
\begin{thm}[Kikuta--Takahashi, Ouchi]\label{thm:KTO}
Let $f\colon X\to X$ be a surjective endomorphism of a smooth projective variety $X$ defined over an algebraically closed field $\k$. Then $h_t(\mathbb{L}f^*)$ is constant with value
\[h^\cat(\mathbb{L}f^*)=\max_{p} \log d_p(f)=\log \rho(f^*_{N^*}).\]
If $\k=\mathbb{C}$, they are also equal to $\log\rho(f^*_{H^*})=h_\top(f)$.\\
Moreover, if $\cD^b(X)$ admits a stability condition, then the previous quantities are equal to the mass growth rate $h_\sigma(\mathbb{L}f^*)$ for any numerical stability condition $\sigma$.
\end{thm}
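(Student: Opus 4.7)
The plan is to squeeze all the invariants in the statement between a common upper bound $\max_p \log d_p(f)$ and a common lower bound $\log\rho(f^*_{N^*})$; the identification of these two extremes is already recorded in Section \ref{sec:DynamicalDegree}. Constancy of the function $t\mapsto h_t(\mathbb{L}f^*)$ follows from \cite[Lemma~2.11]{DHKK}: since $f$ is automatically finite flat, $\mathbb{L}f^* = f^*$ is $t$-exact for the standard heart $\Coh(X)\subset\cD^b(X)$, which has finite cohomological dimension $d := \dim X$, so any split generator lying in $\Coh(X)$ satisfies the required uniform Ext-vanishing in that lemma.

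For the lower bound I would apply Proposition~\ref{prop:GYLowerBoundPolEntropy} together with Proposition~\ref{prop:GYLowerBoundMassGrowth} to deduce
\[
h^\cat(\mathbb{L}f^*),\; h_\sigma(\mathbb{L}f^*) \;\geq\; \log\rho(\cN(\mathbb{L}f^*)).
\]
The Chern character yields an injection $\cN(\cD^b(X))_{\bQ} \hookrightarrow N^*(X)_{\bQ}$ intertwining $\cN(\mathbb{L}f^*)$ with $f^*_{N^*}$, so these spectral radii agree. Combined with Ikeda's inequality $h_\sigma(\mathbb{L}f^*) \leq h^\cat(\mathbb{L}f^*)$ from \cite[Theorem~3.5(2)]{Ikeda}, the whole sandwich will collapse once we establish a matching upper bound $h^\cat(\mathbb{L}f^*) \leq \max_p\log d_p(f)$.

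For that upper bound, which is the main technical input (\cite{KiTa} in characteristic zero, \cite[Theorem~5.2]{Ouchi} in general), pick an ample $L$ and take the Orlov-type split generator $G := \bigoplus_{i=0}^{d} L^{\otimes(-i)} \in \Coh(X)$. By Lemma~\ref{lemm:EntropyInExt-distance}, $h^\cat(\mathbb{L}f^*)$ is the exponential growth rate in $n$ of
\[
\epsilon(G, (\mathbb{L}f^*)^n G) \;=\; \sum_{0\leq i,j\leq d}\; \sum_{k=0}^{d} \dim H^k\bigl(X,\, L^{\otimes(i-j)} \otimes (f^n)^* L^{\otimes(-j)}\bigr).
\]
Using the projection formula $(f^n)_*(f^n)^*(-) = (-)\otimes (f^n)_*\cO_X$ (an honest equality, as $f^n$ is finite) to convert pullbacks into tensor products with the locally free sheaf $(f^n)_*\cO_X$ of rank $(\deg f)^n$, and then invoking asymptotic Riemann--Roch, each $\dim H^k$ is bounded by a polynomial in the intersection numbers $\int_X c_1(L)^p \cdot (f^n)^* c_1(L)^{d-p}$, whose exponential growth rate in $n$ is $d_{d-p}(f)$ by definition. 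This produces the desired upper bound. The complex case then closes via Gromov--Yomdin (Theorem~\ref{thm:GromovYomdin}): $h_\top(f) = \log\rho(f^*_{H^*})$, and the Hodge-theoretic description of $d_p$ recalled in Section~\ref{sec:DynamicalDegree} identifies this with $\max_p\log d_p(f)$.

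The main obstacle I expect is the cohomological estimate inside the upper bound: one must control $\sum_k h^k$ uniformly in $n$, not merely the Euler characteristic $\chi$. In characteristic zero this is handled by Serre vanishing applied to sufficiently ample twists (using that $(f^n)^* L$ is nef with uniform positivity bounds coming from $f^*$ acting on $N^1(X)_\bR$), but in positive characteristic, where Kodaira-type vanishing fails, Ouchi's refinement bypasses vanishing theorems and reduces the bound to a purely numerical statement via $\chi$-estimates and uniform Castelnuovo--Mumford regularity. Once this estimate is secured, the chain of inequalities closes and all four quantities in the theorem coincide.
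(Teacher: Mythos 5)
This theorem is a \emph{cited} result in the paper (from \cite{KiTa}, \cite{Ouchi}, and \cite{Ikeda}); the paper does not re-prove it, so I am comparing your sketch to those references and to the closely parallel argument the paper does give for Theorem~\ref{thm:EntropyOfMorphism}. The overall sandwiching strategy is the right one, and your worry about controlling $\sum_k h^k$ rather than just $\chi$ is exactly the crux. There are, however, three concrete issues.

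First, the lower-bound citations are off. Proposition~\ref{prop:GYLowerBoundMassGrowth} does not assert $h_\sigma(F)\geq\log\rho(\cN(F))$; it \emph{assumes} this as a hypothesis and concludes something about $\ph_\sigma$. The inequality $h_\sigma(F)\geq\log\rho(\cN(F))$ is Ikeda's Theorem~1.2, which the paper quotes in the introduction but does not re-derive in that proposition.

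Second, ``an injection $\cN(\cD^b(X))_\bQ\hookrightarrow N^*(X)_\bQ$ \dots so these spectral radii agree'' is a non sequitur: an equivariant injection only gives $\rho$ of the source $\leq$ $\rho$ of the target, which is the wrong direction for transferring the lower bound to $f^*_{N^*}$. What is actually needed --- and what Remark~\ref{rmk:NumericalGps} supplies --- is that the Mukai-vector (equivalently, Chern character) map is an \emph{isomorphism} $\cN(X)_\bQ\xrightarrow{\ \cong\ }N^*(X)_\bQ$, because via Hirzebruch--Riemann--Roch the Euler pairing corresponds to a non-degenerate pairing on $N^*(X)_\bQ$.

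Third, for the upper bound you place the same generator $G=\bigoplus_{i=0}^d L^{-i}$ in both slots of $\epsilon$, which produces groups $H^k(X, L^{i}\otimes (f^n)^*L^{-j})$ (you also have a computational slip: the left twist should be $L^{\otimes i}$, not $L^{\otimes(i-j)}$). With a negative pullback in the second factor, these line bundles become arbitrarily negative as $n\to\infty$, so no vanishing theorem applies, and the projection formula plus asymptotic Riemann--Roch controls only $\chi$, not the individual $h^k$ --- this is exactly the gap you flag but do not close. The device in \cite{KiTa}, \cite{Ouchi}, and in the paper's own proof of Theorem~\ref{thm:EntropyOfMorphism}, is to exploit the freedom in Lemma~\ref{lemm:EntropyInExt-distance} to use two \emph{different} generators: take $G'=G^\vee=\bigoplus_{j}L^{j}$, so one is led to $H^k(X, L^i\otimes(f^n)^*L^j)$ with \emph{both} factors nef (indeed $(f^n)^*L$ is ample, as a pullback of an ample along a finite surjective map), and then Fujita's vanishing theorem kills all higher cohomology after a suitable ample twist, yielding $\epsilon(G,(\mathbb{L}f^*)^nG')=\chi(G,(f^*)^nG')$. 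Note that Fujita vanishing is characteristic-free, so your speculation that Ouchi must bypass vanishing theorems in positive characteristic via Castelnuovo--Mumford regularity is not quite right; the same vanishing argument works verbatim. Once the $\epsilon$ is reduced to $\chi$, the problem becomes linear algebra on $\cN(X)$ and the sandwich closes.
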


\begin{rmk}[Numerical Chow and numerical Grothendieck]\label{rmk:NumericalGps}
In the above situation, observe that the Mukai-vector map induces an $f^*$-equivariant isomorphism between the numerical Grothendieck group $\mathcal{N}(X)_\mathbb{Q}\coloneqq\mathcal{N}(\cD^b(X))_\mathbb{Q}$ and the numerical Chow group $N^*(X)_\mathbb{Q}\coloneqq \operatorname{CH}^*(X)/\equiv$, both with rational coefficients, 
\[\operatorname{v}: \mathcal{N}(X)_\mathbb{Q}\xrightarrow{\cong} N^*(X)_\mathbb{Q}.\]
As a consequence, $\rho(\mathcal{N}(\mathbb{L}f^*))=\rho(f^*_{N^*})$ and  $s(\mathcal{N}(\mathbb{L}f^*))=s(f^*_{N^*})$.
\end{rmk}

The main result of this section is the following analogy of Theorem \ref{thm:KTO} for the polynomial entropy.
\begin{thm}\label{thm:EntropyOfMorphism}
Let $f$ be a surjective endomorphism of a smooth projective variety $X$ defined over an algebraically closed field $\k$. Let $\mathbb{L}f^*\colon \cD^b(X)\to \cD^b(X)$ be the derived pullback functor. Then the categorical polynomial entropy function of $\mathbb{L}f^*$ is constant with value
\begin{equation}
    \ph(\mathbb{L}f^*)=\max s_p(f)=s(f^*_{N^*}),
\end{equation}
where the maximum runs over all integers $p$ such that $d_p(f)$ attains the maximal dynamical degree of $f$. Here $s_p(f)$ denotes the $p$-th polynomial dynamical degree of $f$ (see Section \ref{sec:PolDynamicalDegree}), and $s(f^*_{N^*})$ is the polynomial growth rate of the induced action on the numerical Chow group $N^*(X)\coloneqq \bigoplus_p N^p(X)$.
%defined as the polynomial growth rate of the action $f^*$ on $H^{p,p}(X)$:
%\begin{equation}
%    s_p(f)\coloneqq s(f^*_{p,p})\coloneqq\lim_{n\to \infty} \frac{\log \|(f^n)^*_{p,p}\|-n \log(d_p(f))}{\log(n)},
%\end{equation}
%which is also one less than the maximal size of Jordan blocks for $f^*_{p,p}$.
\end{thm}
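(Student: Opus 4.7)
The plan is to squeeze $\ph(\mathbb{L}f^*)$ between the Yomdin-type lower bound from Proposition~\ref{prop:GYLowerBoundPolEntropy} and a matching upper bound obtained by refining the proof of Theorem~\ref{thm:KTO}, then invoke Proposition~\ref{prop:PolDynDegCohomology} to rewrite the answer as $\max_p s_p(f)$.

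First I would verify that $\ph_t(\mathbb{L}f^*)$ is constant in $t$, so that it suffices to work at $t=0$. Since $f$ is a surjective endomorphism between smooth varieties of the same dimension, it is flat, so $\mathbb{L}f^* = f^*$ is exact and preserves $\Coh(X)$. Consequently, for any split generator $G\in \cD^b(X)$ (for instance $G=\bigoplus_{i=0}^{\dim X} H^{-i}$ for a very ample $H$), the groups $\Hom(G,(\mathbb{L}f^*)^n(G)[k])$ vanish outside a window $|k|\leq M$ depending only on $G$ and $\dim X$, and Lemma~\ref{lemm:ConstantFunction} gives the constancy.

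Next the lower bound. Theorem~\ref{thm:KTO} gives $h^\cat(\mathbb{L}f^*)=\log\rho(f^*_{N^*})=\log\rho(\cN(\mathbb{L}f^*))$, using Remark~\ref{rmk:NumericalGps}. Hence equality is attained in the categorical Gromov--Yomdin inequality, and Proposition~\ref{prop:GYLowerBoundPolEntropy} yields
\[
\ph(\mathbb{L}f^*)\;\geq\; s(\cN(\mathbb{L}f^*))\;=\;s(f^*_{N^*}),
\]
the latter equality again using Remark~\ref{rmk:NumericalGps}. The identification $s(f^*_{N^*})=\max_p s_p(f)$, with $p$ ranging over those indices where $d_p(f)$ attains the maximal dynamical degree, is precisely Proposition~\ref{prop:PolDynDegCohomology}.

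The main step is the matching upper bound. Taking the split generator $G=\bigoplus_{i=0}^{\dim X}H^{-i}$ for a very ample $H$, the goal is to produce an estimate of the form
\[
\epsilon(G,(\mathbb{L}f^*)^n G)\;\leq\; C\,\|(f^n)^*_{N^*}\|
\]
with $C>0$ independent of $n$. Granting this, Lemma~\ref{lemm:EntropyInExt-distance} gives
\[
\ph(\mathbb{L}f^*)\;\leq\;\limsup_{n\to\infty}\frac{\log\|(f^n)^*_{N^*}\|-n\log\rho(f^*_{N^*})}{\log n}\;=\;s(f^*_{N^*}),
\]
closing the loop. To establish the required inequality, I would bound each individual dimension $\dim\Hom(H^{-i},(f^n)^*H^{-j}[k])$ by $|\chi(H^{-i},(f^n)^*H^{-j})|$ up to a uniform multiplicative constant, using Serre--Kodaira vanishing applied to a suitable twist of $(f^n)^*H^{-j}\otimes H^i$; Hirzebruch--Riemann--Roch then expresses this Euler characteristic as a fixed polynomial functional on $N^*(X)_{\bR}$ evaluated at $(f^n)^*_{N^*}$-images of classes derived from $H$, so summation over $i,j,k$ is controlled by a constant multiple of $\|(f^n)^*_{N^*}\|$.

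The hard part is making the cohomology-to-Euler-characteristic comparison uniform in $n$: one needs the auxiliary positive twist required for vanishing to be absorbable into the fixed generator $G$ regardless of $n$, so that the multiplicative constant $C$ stays independent of $n$. This refines the upper-bound half of the Kikuta--Takahashi/Ouchi argument behind Theorem~\ref{thm:KTO}, where one must now track a linear (not merely subexponential) bound in $\|(f^n)^*_{N^*}\|$ in order to recover the polynomial correction $s(f^*_{N^*})$ after dividing by $\log n$.
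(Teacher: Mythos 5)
Your overall strategy --- constancy by Lemma~\ref{lemm:ConstantFunction}, lower bound from Proposition~\ref{prop:GYLowerBoundPolEntropy}, upper bound of the form $\epsilon \leq C\|(f^n)^*_{N^*}\|$, then Proposition~\ref{prop:PolDynDegCohomology} --- is exactly the paper's. However, your sketch of the upper bound has a genuine gap precisely at the point you yourself flag as ``the hard part,'' and the gap is not merely technical.

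First, the choice of test objects has the wrong signs. You compute $\epsilon(G,(\mathbb{L}f^*)^n G)$ with $G=\bigoplus_i H^{-i}$, which forces you to control $H^k(X, H^i\otimes (f^n)^*H^{-j})$ --- a twist by the \emph{anti-nef} bundle $(f^n)^*H^{-j}$, for which no useful vanishing holds. Lemma~\ref{lemm:EntropyInExt-distance} allows two different split generators, and the paper exploits this by taking $G=\bigoplus_{j=1}^{d+1}L^{-j}$ and $G'=G^\vee=\bigoplus_{j=1}^{d+1}L^{j}$; then $G^\vee\otimes(f^n)^*G'$ is a direct sum of $L^i\otimes(f^n)^*L^{j}$ with $i,j\geq 1$, and each such summand is $L$ tensored with a \emph{nef} line bundle (pullbacks of nef under a finite surjective morphism are nef).

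Second, and more importantly, ``Serre--Kodaira vanishing'' cannot deliver the $n$-uniformity you need. Serre vanishing only says that for a fixed sheaf, twisting by a high enough power of $H$ kills higher cohomology; the threshold depends on the sheaf, hence on $n$, which is precisely what you cannot afford. Kodaira vanishing requires characteristic zero (the theorem is over an arbitrary algebraically closed field) and only applies to ample twists of $\omega_X$, which is not what you have. The correct tool is \emph{Fujita's vanishing theorem}: for an ample $H$ there is a single $m_0$ such that $H^i(X,\mathcal{F}\otimes H^m\otimes N)=0$ for all $i>0$, all $m\geq m_0$, and \emph{all nef} $N$ simultaneously. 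This uniformity over the entire nef cone is exactly what absorbs the $n$-dependence; once it is in hand, $\epsilon(G,(\mathbb{L}f^*)^n G')$ reduces to a single Euler characteristic $\chi(G,(f^*)^nG')$, and non-degeneracy of $\chi$ on $\mathcal{N}(X)$ gives the bound by $C\|\mathcal{N}(\mathbb{L}f^*)^n\|$. Without Fujita's theorem the argument does not close, so as written the proposal does not constitute a proof of the upper bound.
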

\begin{proof}
%We mimic the proof of \cite[Theorem 2.12]{DHKK}, \cite[Theorem 5.5]{KiTa} and \cite[Theorem 5.2]{Ouchi}. 
First of all, the fact that the function $\ph_t(\mathbb{L}f^*)$ is constant follows from Lemma \ref{lemm:ConstantFunction}, since $\mathbb{L}f^*=f^*$ preserves the standard t-structure on $\cD^b(X)$.
Let $d$ be the dimension of $X$. By Fujita's vanishing theorem (\cite{Fujita}, see also \cite[1.4.35]{LazarsfeldI}), there exists a very ample line bundle $L$ such that $$H^i(X, L\otimes L')=0,$$ for all $i>0$ and all nef line bundle $L'$. Take split generators $G=\bigoplus_{j=1}^{d+1} L^{-j}$ and $G'=G^\vee=\bigoplus_{j=1}^{d+1} L^{j}$ of $\cD^b(X)$ (\cite[Theorem 4]{Orlov}). By the choice of $L$, we have that 
\[\operatorname{Ext}^i(G, (f^n)^*(G'))=H^i(X, G^\vee\otimes (f^n)^*(G'))=0,\]
for all $n>0$ and all $i>0$.

Therefore, we obtain that
$$\epsilon(G, (\mathbb{L}f^*)^n(G'))=\dim H^0(X, G^{\vee}\otimes (f^*)^n(G'))=\chi(G, (f^*)^n(G')).$$
Since $\chi(-,-)$ is a non-degenerate bilinear form on $\mathcal{N}(X)$, there exists a constant $C>0$ such that 
\[\chi(G, (f^*)^n(G'))\leq C\cdot\|\mathcal{N}(\mathbb{L}f^*)^n\|.\]
It yields that 
\begin{align*}
\ph(\mathbb{L}f^*)
&=\limsup_{n\to \infty}  \frac{\log\epsilon(G, (\mathbb{L}f^*)^n(G'))-n h^\cat(\mathbb{L}f^*)}{\log(n)}\\
&=\limsup_{n\to \infty}  \frac{\log\epsilon(G, (\mathbb{L}f^*)^n(G'))-n \rho(\mathcal{N}(\mathbb{L}f^*))}{\log(n)}\\
&\leq\limsup_{n\to \infty}  \frac{\log C+ \log\|\mathcal{N}(\mathbb{L}f^*)^n\|-n \rho(\mathcal{N}(\mathbb{L}f^*))}{\log(n)}\\
&=s(\mathcal{N}(\mathbb{L}f^*)),
\end{align*}
where the first equality uses Lemma \ref{lemm:EntropyInExt-distance} (note that $\cD^b(X)$ is saturated), and the second equality uses Theorem \ref{thm:KTO} (and Remark \ref{rmk:NumericalGps}).

On the other hand, by Theorem \ref{thm:KTO}, the hypothesis in Proposition \ref{prop:GYLowerBoundPolEntropy} is satisfied. Hence we get the Yomdin-type lower bound:
\[\ph(\mathbb{L}f^*)\geq s(\mathcal{N}(\mathbb{L}f^*)).\]
We conclude that this is an equality; hence, $\ph(\mathbb{L}f^*)=s(f^*_{N^*})$ by Remark \ref{rmk:NumericalGps}. The remaining assertions follow from Proposition \ref{prop:PolDynDegCohomology}.
\end{proof}
Combining the previous theorem with our discussion on polynomial dynamical degrees in Section \ref{sec:PolDynamicalDegree}, we deduce the following consequence.
\begin{cor}[Complex setting]\label{cor:polycatautomorphism}
If $f$ is a surjective holomorphic endomorphism of a projective complex manifold $X$, then the categorical polynomial entropy of $\mathbb{L}f^*$ is equal to the polynomial growth rate of the induced action $f^*$ on the cohomology $H^*(X, \mathbb{Q})$:
\[\ph(\mathbb{L}f^*)=s(f^*).\]
In particular, if the topological entropy of $f$ is zero, then $\ph_t(\mathbb{L}f^*)$ is a constant function with value
$$\ph(\mathbb{L}f^*)=\lim_{n\to \infty} \frac{\log\|(f^*)^n\|}{\log(n)}=s(f^*),$$
where $s(f^*)+1$ is the maximal size of the Jordan blocks of $f^*$.
\end{cor}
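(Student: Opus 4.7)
The plan is to chain together the three main results established earlier in the section. First, since we work over $\k=\bC$ and $f$ is a surjective holomorphic endomorphism of the smooth projective variety $X$, Theorem \ref{thm:EntropyOfMorphism} applies and gives us
\[
\ph(\mathbb{L}f^*)=\max_{p} s_p(f),
\]
where the maximum runs over those $p$ for which $d_p(f)$ is maximal. Next, Proposition \ref{prop:PolDynDegCohomology}, in its compact K\"ahler formulation, identifies this maximum with the polynomial growth rate of the induced endomorphism on total cohomology:
\[
\max_p s_p(f)=s(f^*_{H^*(X,\bC)}).
\]
Since $f^*$ preserves the rational structure $H^*(X,\bQ)\subset H^*(X,\bC)$ and the polynomial growth rate of a linear map is insensitive to the choice of norm and to extension of scalars (Definition \ref{def:polygrowthratelinearmap}), we may equivalently write $s(f^*)$ for the action on $H^*(X,\bQ)$. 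This establishes the first identity.

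For the second assertion, assume in addition that $h_\top(f)=0$. By the Gromov--Yomdin theorem (Theorem \ref{thm:GromovYomdin}), this means $\rho(f^*)=1$, so in Definition \ref{def:polygrowthratelinearmap} the normalizing factor $n\log\rho(f^*)$ vanishes, yielding
\[
s(f^*)=\lim_{n\to\infty}\frac{\log\|(f^*)^n\|}{\log(n)}.
\]
The existence of the limit and its interpretation as one less than the maximal Jordan block size follow directly from Lemma \ref{lemma:PolGrowLim} (applied with $\rho(f^*)=1$, so every eigenvalue of maximal modulus is automatically of modulus~$1$).

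Finally, to see that the full function $\ph_t(\mathbb{L}f^*)$ is constant in $t$, we invoke Lemma \ref{lemm:ConstantFunction}: the derived pullback $\mathbb{L}f^*=f^*$ preserves the standard t-structure on $\cD^b(X)$, which has finite cohomological dimension equal to $\dim X$ since $X$ is smooth. Thus the hypothesis on the uniform vanishing of $\operatorname{Ext}^k(G,(\mathbb{L}f^*)^n(G))$ for $|k|$ large is satisfied (for instance, with the split generator built from powers of an ample line bundle as in the proof of Theorem \ref{thm:EntropyOfMorphism}), and the lemma yields the constancy. I expect no real obstacle here: essentially every step is a direct citation, the only mild point being to make sure the Jordan-block interpretation is correctly extracted from Lemma \ref{lemma:PolGrowLim} in the normalized regime $\rho(f^*)=1$.
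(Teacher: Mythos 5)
Your chain of citations for the first identity — Theorem \ref{thm:EntropyOfMorphism}, then Proposition \ref{prop:PolDynDegCohomology}, then insensitivity of $s(-)$ to extension of scalars — mirrors the paper's argument and is fine. The constancy-in-$t$ step via Lemma \ref{lemm:ConstantFunction} is also correct (in fact, Theorem \ref{thm:EntropyOfMorphism} already asserts constancy, so you could simply cite it directly).

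There is, however, a genuine gap in your treatment of the Jordan-block interpretation. Lemma \ref{lemma:PolGrowLim} identifies $s(\phi)+1$ with the maximal size of Jordan blocks \emph{whose eigenvalues have maximal modulus $\rho(\phi)$}; it says nothing about Jordan blocks with smaller eigenvalues. The corollary, by contrast, claims that $s(f^*)+1$ is the maximal size of \emph{all} Jordan blocks of $f^*$. Your parenthetical remark ``applied with $\rho(f^*)=1$, so every eigenvalue of maximal modulus is automatically of modulus $1$'' is a tautology — of course eigenvalues of maximal modulus have modulus $\rho(f^*)=1$ — and does not rule out larger Jordan blocks attached to eigenvalues of modulus strictly less than $1$. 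To close the gap one must show that $f^*$ has no such eigenvalues at all. This is the point the paper makes: because $f^*$ preserves the integral lattice $H^*(X,\bZ)$ and $\rho(f^*)=1$, its eigenvalues are algebraic integers of modulus at most $1$; since $\rho(f^*)=1$ forces $\deg f=1$ so that $f^*$ is a lattice automorphism with $\det(f^*)=\pm 1$, all eigenvalues in fact have modulus exactly $1$, and Kronecker's theorem then says they are roots of unity. Hence $f^*$ is quasi-unipotent, every eigenvalue has maximal modulus, and Lemma \ref{lemma:PolGrowLim} yields the claimed Jordan-block description. Without this integrality/Kronecker step, the equality with the maximal Jordan block size over all blocks is not justified.
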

\begin{proof}
The first part can be easily deduced by combining Proposition \ref{prop:PolDynDegCohomology} with Theorem \ref{thm:EntropyOfMorphism}. Now assuming the vanishing of the topological entropy, the Gromov--Yomdin Theorem \ref{thm:GromovYomdin} implies that all dynamical degrees are equal to 1 and the spectral radius of $f^*$ on $H^*(X)$ is 1. Thanks to the integral structure on cohomology, all the eigenvalues of $f^*$ are algebraic integers, hence they are roots of unity by Kronecker's theorem. Therefore, $f^*$ is virtually unipotent and  $\ph(\mathbb{L}f^*)=s(f^*)$
has the description by Jordan blocks in the statement, thanks to Lemma \ref{lemma:PolGrowLim}.
\end{proof}

\begin{rmk}
Corollary \ref{cor:polycatautomorphism} should be compared with the inequalities in \cite[Theorem 2.1 and Theorem 4.1]{CanPR}. Note that unlike its categorical counterpart, the topological polynomial entropy of an automorphism can indeed be different from the polynomial growth rate of the induced action on cohomology. For instance, the automorphism of the projective line given by $[x: y]\mapsto [x+y: y]$ has topological polynomial entropy 1, while its action on cohomology is trivial.
\end{rmk}

\begin{cor}[Polynomial mass growth]\label{cor:MassGrowthMorphism}
In the same situation as in Theorem \ref{thm:EntropyOfMorphism}, if $\cD^b(X)$ admits a numerical stability condition $\sigma$, then
$$\ph(\mathbb{L}f^*)=\ph_{\sigma}(\mathbb{L}f^*)=s(f^*_{N^*}).$$
\end{cor}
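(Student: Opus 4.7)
The plan is to obtain the corollary by sandwiching $\ph_\sigma(\mathbb{L}f^*)$ between $s(f^*_{N^*})$ from above and below, using two results already in hand: the computation of the categorical polynomial entropy in Theorem~\ref{thm:EntropyOfMorphism}, and the Yomdin-type lower bound for polynomial mass growth in Proposition~\ref{prop:GYLowerBoundMassGrowth}. Since the identification $\ph(\mathbb{L}f^*) = s(f^*_{N^*})$ has already been established, the new content here is only the extra equality $\ph_\sigma(\mathbb{L}f^*) = \ph(\mathbb{L}f^*)$, and this should fall out essentially formally.

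First, I would record the input from entropy-level results. By Theorem~\ref{thm:KTO} applied to $F=\mathbb{L}f^*$, we have
\[
h_\sigma(\mathbb{L}f^*) = h^\cat(\mathbb{L}f^*) = \log\rho(f^*_{N^*}),
\]
and by the Mukai-vector identification recalled in Remark~\ref{rmk:NumericalGps}, the right-hand side also equals $\log\rho(\mathcal{N}(\mathbb{L}f^*))$. In particular the hypothesis ``$h_\sigma(F)=\log\rho(\mathcal{N}(F))$'' of Proposition~\ref{prop:GYLowerBoundPolEntropy}/\ref{prop:GYLowerBoundMassGrowth} is automatically verified for $F=\mathbb{L}f^*$.

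Next I would deduce the upper bound on $\ph_\sigma(\mathbb{L}f^*)$. Since the entropies coincide, $h_\sigma(\mathbb{L}f^*) = h^\cat(\mathbb{L}f^*)$, the general comparison Lemma~\ref{lemma:ComparingEntropyMass} (at $t=0$) yields
\[
\ph_\sigma(\mathbb{L}f^*) \leq \ph(\mathbb{L}f^*) = s(f^*_{N^*}),
\]
where the last equality is Theorem~\ref{thm:EntropyOfMorphism}. For the matching lower bound, I would apply Proposition~\ref{prop:GYLowerBoundMassGrowth} to $F=\mathbb{L}f^*$: the equality $h_\sigma(\mathbb{L}f^*) = \log\rho(\mathcal{N}(\mathbb{L}f^*))$ verified above gives
\[
\ph_\sigma(\mathbb{L}f^*) \geq s(\mathcal{N}(\mathbb{L}f^*)) = s(f^*_{N^*}),
\]
again using the Mukai-vector identification to swap $s(\mathcal{N}(\mathbb{L}f^*))$ for $s(f^*_{N^*})$ (Remark~\ref{rmk:NumericalGps}).

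Combining the two inequalities collapses everything to equality, and the chain $\ph(\mathbb{L}f^*) = \ph_\sigma(\mathbb{L}f^*) = s(f^*_{N^*})$ is complete. There is no genuine obstacle here: once one recognizes that the derived pullback already saturates the Gromov--Yomdin-type inequalities at the entropy level (thanks to Kikuta--Takahashi--Ouchi and Ikeda), the polynomial refinement is forced by the abstract sandwich Lemma~\ref{lemma:ComparingEntropyMass} $\leq$ and Proposition~\ref{prop:GYLowerBoundMassGrowth} $\geq$. The only mild subtlety worth flagging explicitly is that the numerical stability condition must factor through $\mathcal{N}(\cD^b(X))$ for Proposition~\ref{prop:GYLowerBoundMassGrowth} to apply, but this is exactly what the hypothesis ``numerical stability condition'' in the statement guarantees.
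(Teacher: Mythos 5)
Your proof is correct and follows essentially the same route as the paper: use the last assertion of Theorem~\ref{thm:KTO} to match the entropy-level invariants, apply Lemma~\ref{lemma:ComparingEntropyMass} for the upper bound $\ph_\sigma(\mathbb{L}f^*)\leq \ph(\mathbb{L}f^*)$, apply Proposition~\ref{prop:GYLowerBoundMassGrowth} for the lower bound $\ph_\sigma(\mathbb{L}f^*)\geq s(\mathcal{N}(\mathbb{L}f^*))$, and close the sandwich with Theorem~\ref{thm:EntropyOfMorphism} and Remark~\ref{rmk:NumericalGps}. The only difference is cosmetic: you spell out explicitly that the hypothesis $h_\sigma(F)=\log\rho(\mathcal{N}(F))$ of Proposition~\ref{prop:GYLowerBoundMassGrowth} is satisfied, which the paper leaves implicit.
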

\begin{proof}
By the last assertion of Theorem \ref{thm:KTO}, one can apply Lemma \ref{lemma:ComparingEntropyMass} to see that 
\begin{equation}\label{eqn:InequalityMorphism}
    \ph(\mathbb{L}f^*)\geq \ph_\sigma(\mathbb{L}f^*)\geq s(\mathcal{N}(\mathbb{L}f^*)),
\end{equation}
where the second inequality is the Yomdin-type estimate in Proposition \ref{prop:GYLowerBoundMassGrowth}.
Therefore Theorem \ref{thm:EntropyOfMorphism} implies that both inequalities in \eqref{eqn:InequalityMorphism} are equalities. Finally, one can identify $s(f^*_{N^*})$ with $s(\mathcal{N}(\mathbb{L}f^*))$ by Remark \ref{rmk:NumericalGps}.
\end{proof}

\section{Examples}\label{sec: Examples}
  We compute in this section the categorical polynomial entropy, as well as the polynomial mass growth rate of some standard functors, in a parallel way to \cite[Section 2]{DHKK}.
Recall that when concentrating in the value of entropy functions at $t=0$, we write $\delta=\delta_0$, $\epsilon=\epsilon_0$, $m_{\sigma}=m_{\sigma, 0}$, $h^\cat=h_0$, $\ph=\ph_0$, $h_\sigma=h_{\sigma, 0}$, and $\ph_\sigma=\ph_{\sigma, 0}$, etc.

\subsection{Shifts}
The following lemma shows that cohomological shifts do not affect the polynomial entropy.
\begin{lemma}[Shifts (I):  entropy]\label{lemma:Shifts}
Let $\cD$ be a saturated triangulated category and $F$ an endofunctor of $\cD$. Then for any integer $m$, $$\ph_t(F\circ [m])=\ph_t(F).$$
In particular, $\ph_t([m])=0$.
\end{lemma}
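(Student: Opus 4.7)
The plan is to exploit two elementary facts: since $F$ is a triangulated functor, it commutes with the shift functor, so $F\circ[m]=[m]\circ F$; and the complexity function is homogeneous under shifts on the second argument, namely
\[
\delta_t(M,N[l])=e^{lt}\delta_t(M,N)
\]
for any integer $l$ and any $M,N\in\cD$. This last identity follows from Definition~\ref{def:Complexity}: shifting a witnessing filtration of $N\oplus N'$ by $[l]$ produces a witnessing filtration of $N[l]\oplus N'[l]$ in which every cone $M[n_k]$ gets replaced by $M[n_k+l]$, so the cost $\sum e^{n_k t}$ gets multiplied by $e^{lt}$; the reverse inequality follows by shifting back by $[-l]$.

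Combining these, for a split generator $G$ we have
\[
(F\circ[m])^n(G)=F^n(G)[nm],
\]
hence
\[
\log\delta_t(G,(F\circ[m])^n(G))=nmt+\log\delta_t(G,F^n(G)).
\]
Dividing by $n$ and letting $n\to\infty$ gives the known shift rule for entropy, $h_t(F\circ[m])=h_t(F)+mt$ (cf.~\cite{DHKK}). Substituting this into the definition of $\ph_t(F\circ[m])$, the contributions $nmt$ and $n(h_t(F)+mt)$ cancel in the numerator, leaving exactly the expression defining $\ph_t(F)$. Thus $\ph_t(F\circ[m])=\ph_t(F)$.

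For the special case $F=\mathrm{id}$, the same computation reduces to $\log\delta_t(G,G[nm])=nmt+\log\delta_t(G,G)$, where $\delta_t(G,G)\in[0,\infty)$ is a constant independent of $n$. After subtracting $n\cdot h_t([m])=nmt$ and dividing by $\log(n)$, the remaining $\log\delta_t(G,G)/\log(n)$ tends to $0$, giving $\ph_t([m])=0$. There is no genuine obstacle here; the only point that deserves a brief justification is the scaling identity for $\delta_t$ under shifts, which is immediate from the definition but is the underlying reason the polynomial correction is insensitive to a cohomological shift.
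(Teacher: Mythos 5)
Your proof is correct and the overall shape matches the paper's: establish a shift-homogeneity identity for the complexity/distance function, invoke $h_t(F\circ[m])=h_t(F)+mt$, and watch the two $nmt$ terms cancel in the numerator of the $\limsup$. The one genuine difference is which function you push the shift through. The paper passes to the $\operatorname{Ext}$-distance function $\epsilon_t$, where $\epsilon_t(G,F^n(G)[mn])=e^{mnt}\epsilon_t(G,F^n(G))$ is immediate from the defining sum, but this route requires Lemma~\ref{lemm:EntropyInExt-distance} and hence the saturatedness hypothesis. You instead prove the homogeneity $\delta_t(M,N[l])=e^{lt}\delta_t(M,N)$ directly at the level of towers, by shifting a witnessing filtration; this is elementary and bypasses $\epsilon_t$ entirely, so your argument in fact establishes the lemma for any triangulated category, without saturatedness. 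That is a (minor) strengthening, and it makes the proof self-contained rather than dependent on Lemma~\ref{lemm:EntropyInExt-distance}. Your treatment of the special case $\ph_t([m])=0$ is also fine: once $(F\circ[m])^n(G)=F^n(G)[mn]$ has been used to cancel the $nmt$ terms, only $\log\delta_t(G,G)/\log(n)\to 0$ remains.
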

\begin{proof}
In \cite[Lemma 3.7]{KiTa}, it is shown that the categorical entropy of the shift functor $[m]$ is $h_t([m])=mt$ and more generally, for any endofunctor $F$, we have $h_t(F\circ [m])=h_t(F)+mt$.\\
Now it follows from definition that for any split generator $G$ of $\cD$, 
$$\epsilon_t(G, F^n(G)[mn])=\epsilon_t(G, F^n(G))\cdot e^{mnt}.$$
Therefore, using Lemma \ref{lemm:EntropyInExt-distance}, we see that 
\begin{equation*}
\ph_t(F\circ [m])=\limsup_{n\to\infty}\frac{\log(\epsilon_t(G, F^n(G))\cdot e^{mnt})-n(h_t(F)+mt)}{\log(n)}=\ph_t(F).\qedhere
\end{equation*}
\end{proof}

Similarly for the mass growth, we have the following result.
\begin{lemma}[Shifts (II): mass growth]\label{lemma:ShiftsMassGrowth}
Let $\cD$ be a triangulated category endowed with a stability condition $\sigma$. Let $F$ be a endofunctor of $\cD$. Then for any integer $m$, $$h_{\sigma, t}(F\circ [m])=h_{\sigma, t}(F)+mt; \quad \ph_{\sigma, t}(F\circ [m])=\ph_{\sigma, t}(F).$$
In particular, $h_{\sigma, t}([m])=mt$ and $\ph_{\sigma, t}([m])=0$.
\end{lemma}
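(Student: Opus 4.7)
The plan is to mirror the proof strategy of Lemma~\ref{lemma:Shifts} but with the mass function $m_{\sigma,t}$ replacing the $\operatorname{Ext}$-distance function $\epsilon_t$. The whole argument reduces to a single clean identity: shifting an object by $[m]$ multiplies its mass by $e^{mt}$.

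First, I would observe that since $F$ is a triangulated endofunctor, it commutes with the shift, so for every integer $m$ and every $n \geq 0$ we have $(F \circ [m])^n = F^n \circ [mn]$. Next, I would compute the effect of the shift on the mass. Given a non-zero object $E$ with Harder--Narasimhan filtration having $\sigma$-semistable factors $A_1, \dots, A_\ell$ of phases $\phi(A_k)$, the filtration of $E[m]$ has semistable factors $A_k[m]$ of phases $\phi(A_k) + m$, while the central charge satisfies $Z(A_k[m]) = (-1)^m Z(A_k)$, so $|Z(A_k[m])| = |Z(A_k)|$. Plugging these into the definition gives
\[
m_{\sigma,t}(E[m]) \;=\; \sum_k |Z(A_k)|\, e^{(\phi(A_k)+m)t} \;=\; e^{mt}\, m_{\sigma,t}(E).
\]

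Combining these two observations, for any split generator $G$ of $\cD$ we get
\[
m_{\sigma,t}\bigl((F\circ [m])^n(G)\bigr) \;=\; m_{\sigma,t}\bigl(F^n(G)[mn]\bigr) \;=\; e^{mnt}\, m_{\sigma,t}(F^n(G)).
\]
Taking logarithm, dividing by $n$, and passing to $\limsup$ yields $h_{\sigma,t}(F\circ [m]) = h_{\sigma,t}(F) + mt$. For the polynomial statement, subtract $n\, h_{\sigma,t}(F\circ [m]) = n\, h_{\sigma,t}(F) + mnt$ from $\log m_{\sigma,t}((F\circ[m])^n(G)) = \log m_{\sigma,t}(F^n(G)) + mnt$; the $mnt$ terms cancel, and dividing by $\log(n)$ and taking $\limsup$ gives $\ph_{\sigma,t}(F\circ [m]) = \ph_{\sigma,t}(F)$. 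The final ``In particular'' assertion follows by taking $F = \mathrm{id}$, for which $m_{\sigma,t}(\mathrm{id}^n(G)) = m_{\sigma,t}(G)$ is constant in $n$, so $h_{\sigma,t}(\mathrm{id}) = 0$ and $\ph_{\sigma,t}(\mathrm{id}) = 0$.

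There is essentially no obstacle here; the only point to verify carefully is the phase/central-charge bookkeeping under the shift, which is immediate from the axioms of a stability condition. Everything else is formal manipulation of the definitions of $h_{\sigma,t}$ and $\ph_{\sigma,t}$.
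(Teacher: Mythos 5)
Your proof is correct and follows exactly the route the paper indicates: reduce everything to the identity $m_{\sigma,t}(E[m]) = e^{mt}m_{\sigma,t}(E)$, then mirror the computation of Lemma~\ref{lemma:Shifts} with the mass function in place of $\epsilon_t$. Your explicit verification of that identity via the phase shift $\phi(A_k[m]) = \phi(A_k)+m$ and $|Z(A_k[m])| = |Z(A_k)|$ is precisely the fact the paper invokes without elaboration.
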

\begin{proof}
The proof is the same as in Lemma \ref{lemma:Shifts}, by using the fact that $m_{\sigma, t}(E[m])=m_{\sigma, t}(E)e^{mt}$ for any object $E$ and any integer $m$.
\end{proof}

\begin{rmk}[Serre functor of fractional CY categories]
A triangulated category $\cD$ with a Serre functor $\mathbf{S}$ is called \textit{fractional Calabi--Yau}, if there exist integers $n> 0$ and $m$ such that $\mathbf{S}^n\cong [m]$. The rational number $\frac{m}{n}$ is called the Calabi--Yau dimension of $\cD$. In \cite[Section 2.6.1]{DHKK}, it is shown that if $\cD$ is saturated, then $$h_t(\mathbf{S})=\frac{m}{n}t.$$ As for the polynomial entropy, we claim that 
\[\ph_t(\mathbf{S})=0.\]
Indeed, fixing a split generator $G$ of $\cD$, for any $t\in \mathbb{R}$ and any $N>0$, if one writes $N=nq+r$ with $q=\lfloor\frac{N}{n}\rfloor$ and $0\leq r<n$, then we have 
\[\epsilon_t(G, \mathbf{S}^N(G))=\epsilon_t(G,\mathbf{S}^r(G)[mq])=\epsilon_t(G,\mathbf{S}^r(G))\cdot e^{mqt}.\]
Therefore
\[\log\epsilon_t(G, \mathbf{S}^N(G))-N\cdot h_t(\mathbf{S})=\log\epsilon_t(G,\mathbf{S}^r(G))-\frac{mr}{n}t.\]
To conclude, it suffices to observe that the absolute value of the right-hand side is bounded, independently of $N$, by 
\[\max_{0\leq r\leq n-1}\{|\log\epsilon_t(G,\mathbf{S}^r(G))|\}+|mt|.\]
Similarly, for the mass growth rate, instead of assuming the saturatedness, we suppose there exists a stability condition $\sigma$ on $\cD$, then $$h_{\sigma, t}(\mathbf{S})=\frac{m}{n}t \quad \text{ and }\quad  \ph_{\sigma, t}(\mathbf{S})=0.$$
As a consequence, for any endofunctor $F$ commuting with $\mathbf{S}$ (for example, an autoequivalence), Lemma \ref{lemma:composition} implies that 
\[h_t(F\circ \mathbf{S})=h_t(F)+\frac{m}{n}t  \quad \text{and} \quad \ph_t(F\circ \mathbf{S})=\ph_t(F).\]
\end{rmk}

\subsection{Tensoring line bundles}\label{sec:LineBundle}
Let the base field $\k$ be algebraically closed. Let $X$ be a smooth projective variety defined over $\k$ and $L$ a line bundle on $X$. We consider here the autoequivalence of tensoring with $L$ on the derived category of $X$:
\[-\otimes L\colon \cD^b(X)\to \cD^b(X).\]
It is shown in \cite{DHKK} that the categorical entropy function of this functor is the constant zero function:
\begin{equation}\label{eqn:CatEntropyTensorLineBundle}
    h_t(-\otimes L)=h^\cat(-\otimes L)=0.
\end{equation}
We shall study its categorical polynomial entropy, which is indeed a non-trivial and meaningful invariant. The first result below estimates the polynomial entropy.  Recall that the \textit{numerical dimension} of $L$ is by definition 
\[\nu(L)\coloneqq \max\{m \mid c_1(L)^m \not\equiv 0\},\]
where $\equiv$ denotes the numerical equivalence relation.
\begin{prop}\label{prop:TensorLineBundle}
Notation is as before. Let $d=\dim(X)$.
\begin{enumerate}[label=(\roman*)]
    \item The polynomial entropy function $\ph_t(-\otimes L)$ is constant in $t$.
    %\item $\ph(-\otimes L)$ depends only on the numerical class of $L$.
    \item $\ph(-\otimes L)\leq d$.
    \item $\ph(-\otimes L)\geq \nu(L)$.
\end{enumerate}
\end{prop}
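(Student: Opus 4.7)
The plan is to address the three parts separately; (i) and (iii) follow formally from earlier results, while (ii) is the technical heart.

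\textbf{Parts (i) and (iii).} For (i), the functor $-\otimes L$ is exact on $\Coh(X)$, hence $t$-exact for the standard bounded t-structure on $\cD^b(X)$, whose cohomological dimension is $d$. Lemma~\ref{lemm:ConstantFunction} applied with any coherent split generator and $M=d+1$ immediately yields that $\ph_t(-\otimes L)$ is constant in $t$. For (iii), by \eqref{eqn:CatEntropyTensorLineBundle} we have $h^\cat(-\otimes L)=0$; under the identification $\cN(X)_\bQ\cong N^*(X)_\bQ$ of Remark~\ref{rmk:NumericalGps}, $\cN(-\otimes L)$ acts as multiplication by $\ch(L)=\exp(c_1(L))=\mathrm{id}+N$ with $N$ nilpotent, so $\rho(\cN(-\otimes L))=1$ and Proposition~\ref{prop:GYLowerBoundPolEntropy}(i) gives $\ph(-\otimes L)\geq s(\cN(-\otimes L))$. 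Expanding $N^k=c_1(L)^k+(\text{higher degree terms})$ shows that $N^{\nu(L)}\neq 0$ (its lowest-degree component is $c_1(L)^{\nu(L)}\neq 0$) while $N^{\nu(L)+1}=0$ (every summand is a multiple of some $c_1(L)^{\geq\nu(L)+1}=0$), so Lemma~\ref{lemma:PolGrowLim} yields $s(\cN(-\otimes L))=\nu(L)$.

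\textbf{Part (ii).} I take $L_0$ very ample and use Orlov's split generator $G:=\bigoplus_{j=0}^{d}L_0^{-j}$. By Lemma~\ref{lemm:EntropyInExt-distance},
\[
\ph(-\otimes L)=\limsup_{n\to\infty}\frac{\log\epsilon(G,G\otimes L^n)}{\log n},
\]
and $\epsilon(G,G\otimes L^n)=\sum_{i,j=0}^{d}\sum_{k\in\bZ}h^k(X,L_0^{i-j}\otimes L^n)$ is a finite sum of cohomology dimensions of line bundles of the form $M\otimes L^n$ for fixed $M$. The upper bound thus reduces to the asymptotic estimate
\[
\sum_k h^k(X,M\otimes L^n)=O(n^d)\quad\text{as}\ n\to\infty
\]
for any fixed line bundle $M$ on $X$. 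To establish this, write $L\cong A\otimes B^{-1}$ with $A,B$ very ample; a general section of $A$ cuts out a smooth divisor $Y\subset X$ of dimension $d-1$, yielding
\[
0\to M\otimes L^{n-1}\otimes B^{-1}\to M\otimes L^n\to (M\otimes L^n)|_Y\to 0
\]
via the identity $L^n\otimes A^{-1}\cong L^{n-1}\otimes B^{-1}$. Induction on $d$ bounds the cohomology of the restriction to $Y$ by $O(n^{d-1})$, while iteration on $n$ reduces the kernel eventually to $M\otimes B^{-n}$, whose cohomology is controlled by Serre duality against the ample $B$ together with Serre vanishing. Summing the polynomial contributions yields the $O(n^d)$ bound and hence $\ph(-\otimes L)\leq d$.

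\textbf{Main obstacle.} The decisive step is the cohomology growth estimate in (ii). A double induction (on $\dim X$ via hyperplane sections and on $n$) is required, together with careful handling of the most negative twists $M\otimes B^{-n}$ via Serre duality. Parts (i) and (iii) are, by contrast, formal consequences of the machinery already developed in Sections~\ref{section:catpolyentropy} and~\ref{sec:MassGrowth}.
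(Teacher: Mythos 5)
Parts (i) and (iii) match the paper's proof in substance. For (i), you invoke Lemma~\ref{lemm:ConstantFunction} via $t$-exactness of $-\otimes L$ for the standard heart, exactly as the paper does. For (iii), you reproduce the paper's argument that under the Mukai-vector identification $\cN(X)_\bQ\cong N^*(X)_\bQ$ the endomorphism $\cN(-\otimes L)$ is multiplication by $e^{c_1(L)}$, and then apply the Yomdin-type bound of Proposition~\ref{prop:GYLowerBoundPolEntropy}; your extra two sentences verifying $N^{\nu(L)}\neq 0$ and $N^{\nu(L)+1}=0$ simply spell out what the paper declares ``clear'', and they are correct (the lowest-degree component of $(e^{c_1(L)}-1)^{\nu(L)}$, applied to $1\in N^0(X)$, is $c_1(L)^{\nu(L)}\not\equiv0$, while every monomial in $(e^{c_1(L)}-1)^{\nu(L)+1}$ involves $c_1(L)^{\geq\nu(L)+1}\equiv0$).

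Part (ii) diverges: the paper simply cites \cite[Example 1.2.33]{LazarsfeldI} for the estimate $\sum_k h^k(X,\mathcal{F}\otimes L^{\otimes n})=O(n^d)$, whereas you try to reprove it by slicing with a very ample divisor $Y\in|A|$, where $L\cong A\otimes B^{-1}$, together with a double induction. The idea is essentially the standard proof, but as formulated your induction hypothesis is too weak to close the argument. After iterating the short exact sequence $j$ times, the term living on $Y$ is
\[
h^k\bigl(Y,\ (M\otimes B^{-j})|_Y\otimes(L|_Y)^{\otimes(n-j)}\bigr)\ =\ h^k\bigl(Y,\ M|_Y\otimes(A|_Y)^{\otimes(n-j)}\otimes(B|_Y)^{\otimes(-n)}\bigr),
\]
so the ``fixed coefficient sheaf'' on $Y$ is $(M\otimes B^{-j})|_Y$, which changes with $j$. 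Applying the stated one-parameter induction hypothesis ``$\sum_k h^k(Y,M'\otimes L'^{\otimes m})=O(m^{d-1})$ for fixed $M'$'' gives, for each $j$, a bound $C_j\,(n-j)^{d-1}$ whose constant depends on $j$, and $\sum_{j=0}^{n-1}C_j(n-j)^{d-1}$ is not controlled without knowing how $C_j$ grows. To make the slicing argument close, you need a two-parameter (uniform) version of the hypothesis, e.g.: for any projective variety of dimension $e$, any coherent sheaf $\mathcal{F}$, and any two very ample $A,B$, there is a constant $C$ such that $\sum_k h^k\bigl(\mathcal{F}\otimes A^{\otimes a}\otimes B^{\otimes(-b)}\bigr)\leq C\,(a+b+1)^e$ for all $a,b\geq 0$. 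With that statement the double induction goes through, terminating in the negative twists $M\otimes B^{-n}$ via Serre duality and Serre vanishing as you indicate. As it stands, the gap is real; the simplest fix is to cite the Lazarsfeld estimate directly, which is what the paper does.
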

\begin{proof}
$(i)$ follows from Lemma \ref{lemm:ConstantFunction}, since $-\otimes L$ preserves the standard t-structure of $\cD^b(X)$.\\
$(ii)$. By definition and \eqref{eqn:CatEntropyTensorLineBundle}, the value of this constant function is
$$\ph(-\otimes L)=\limsup_{n\to \infty}\frac{\log \epsilon(G, G\otimes L^{\otimes n})}{\log(n)},$$
where $G$ is any split generator. In the sequel, we choose  $G$ to be a locally free sheaf on $X$, for example $\bigoplus_{i=0}^{d} \mathcal{O}_X(i)$. By definition,
\begin{equation}\label{eqn:SumHk}
    \epsilon(G, G\otimes L^{\otimes n})=\sum_{k=0}^{d}\dim H^k(X, G^\vee\otimes G\otimes L^{\otimes n}).
\end{equation}
By \cite[Example 1.2.33]{LazarsfeldI}, for any $k$, the growth of $\dim H^k(X, G^\vee\otimes G\otimes L^{\otimes n})$ is at most like a polynomial of degree $d$; hence $\epsilon(G, G\otimes L^{\otimes n})= O(n^d)$, and $(ii)$ follows immediately.\\
$(iii)$. We claim that the polynomial growth of the endomorphism 
\[\cdot [L]\colon \mathcal{N}(X)\to \mathcal{N}(X)\]
is the numerical dimension of $L$, where $\mathcal{N}(X)\coloneqq \mathcal{N}(\cD^b(X))$ is the numerical Grothendieck group of $X$.  Indeed, denoting by $N^*(X)_\mathbb{Q}\coloneqq \operatorname{CH}^*(X)_\mathbb{Q}/{\equiv}$ the Chow group of algebraic cycles modulo numerical equivalence, then
the Mukai-vector map 
\[\operatorname{v}\colon \mathcal{N}(X)_\mathbb{Q}\to N^*(X)_\mathbb{Q}\]
is an isomorphism of finite-dimensional $\mathbb{Q}$-vector spaces, making the following diagram commutative:
\begin{equation*}
    \xymatrix{
    \mathcal{N}(X)_\mathbb{Q} \ar[r]^{\cdot [L]} \ar[d]_{\operatorname{v}}^{\cong} & \mathcal{N}(X)_\mathbb{Q} \ar[d]_{\operatorname{v}}^{\cong}\\
    N^*(X)_\mathbb{Q} \ar[r]^{\cdot e^{c_1(L)}} & N^*(X)_\mathbb{Q},
    }
\end{equation*}
where the bottom arrow is the intersection product with 
$$e^{c_1(L)}=\sum_{k=0}^{\nu(L)}\frac{1}{k!}c_1(L)^k.$$
It is then clear that the spectral radius of $\cdot e^{c_1(L)}\in \operatorname{End}(N^*(X))$ is 1 and its polynomial growth rate is $\nu(L)$. The claim is proved. Combined with the Yomdin-type lower bound established in Proposition \ref{prop:GYLowerBoundPolEntropy} (note that $h^\cat(-\otimes L)=0$), we obtain that $\ph(-\otimes L)\geq s(\cdot [L])=\nu(L)$.
\end{proof}

Now we try to determine precisely the value of the polynomial entropy of the functor $- \otimes L$. It turns out to be quite related to the positivity properties of $L$. For illustration, let us first show the following result on big and nef line bundles. 

\begin{lemma}[Nef and big line bundles]\label{lemma:NefBig}
Let $L$ be a nef line bundle on a smooth projective variety $X$. If $L$ is moreover big, then 
$\ph(-\otimes L)= d$.\\
If  $L$ is not big, then $\ph(-\otimes L)\leq d-1$.
\end{lemma}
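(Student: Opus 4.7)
The first assertion follows immediately from Proposition \ref{prop:TensorLineBundle}: if $L$ is nef and big then $L^d > 0$, so $\nu(L) = d$, and the sandwich $\nu(L) \leq \ph(-\otimes L) \leq d$ there collapses to an equality. For the second assertion, suppose $L$ is nef but not big, so $L^d = 0$. The plan is to sharpen the estimate $\epsilon(G, G \otimes L^{\otimes n}) = O(n^d)$ used in the proof of Proposition \ref{prop:TensorLineBundle} to $O(n^{d-1})$, which will immediately yield $\ph(-\otimes L) \leq d-1$ after dividing by $\log n$ and taking $\limsup$. Writing $\mathcal{F} := G^\vee \otimes G$ for the fixed coherent (locally free) sheaf appearing in \eqref{eqn:SumHk}, I plan to treat the summands $h^k(X, \mathcal{F} \otimes L^{\otimes n})$ separately.

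For higher cohomology ($k \geq 1$), I would invoke Fujita's vanishing in the same form as in the proof of Theorem \ref{thm:EntropyOfMorphism}: applied to the fixed $\mathcal{F}$ and a very ample line bundle $A$, it yields an integer $m_0$ such that $H^k(X, \mathcal{F} \otimes A^{\otimes m_0} \otimes N) = 0$ for all $k \geq 1$ and every nef line bundle $N$, the crucial feature being that $m_0$ is \emph{uniform} over nef twists. Choosing any effective divisor $D \in |A^{\otimes m_0}|$, the short exact sequence
\[0 \to \mathcal{F} \otimes L^{\otimes n} \to \mathcal{F} \otimes A^{\otimes m_0} \otimes L^{\otimes n} \to (\mathcal{F} \otimes A^{\otimes m_0} \otimes L^{\otimes n})|_D \to 0\]
and its long exact sequence in cohomology would give $h^k(X, \mathcal{F} \otimes L^{\otimes n}) \leq h^{k-1}(D, (\mathcal{F} \otimes A^{\otimes m_0})|_D \otimes L|_D^{\otimes n})$ for every $k \geq 1$. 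Since $\dim D = d-1$, the polynomial cohomology bound \cite[Example~1.2.33]{LazarsfeldI} applied on $D$ to the fixed coherent sheaf $(\mathcal{F} \otimes A^{\otimes m_0})|_D$ yields $h^{k-1}(D, \cdot) = O(n^{d-1})$, so $h^k(X, \mathcal{F} \otimes L^{\otimes n}) = O(n^{d-1})$ for each $k \geq 1$.

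For $H^0$, I would use asymptotic Riemann--Roch: $\chi(X, \mathcal{F} \otimes L^{\otimes n})$ is a polynomial in $n$ of degree at most $d$ whose coefficient of $n^d$ is proportional to $L^d = 0$, hence $|\chi| = O(n^{d-1})$. Combining with the identity $h^0 = \chi + \sum_{k \geq 1}(-1)^{k+1} h^k$ and the bound on higher cohomology above forces $h^0 = O(n^{d-1})$ as well. Summing over $k$ then delivers $\epsilon(G, G \otimes L^{\otimes n}) = O(n^{d-1})$, completing the plan. The main technical input is Fujita's vanishing, which supplies the uniformity in $n$ that is needed to reduce higher cohomology on $X$ to cohomology on a subscheme of dimension $d-1$; everything else is asymptotic Riemann--Roch together with Lazarsfeld's general polynomial growth of cohomology.
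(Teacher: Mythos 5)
Your proposal is correct, and it reaches the same conclusions via essentially the same underlying ideas, but the packaging is genuinely different from the paper's in a couple of places worth noting. For the big-and-nef case, the paper re-derives the lower bound from \cite[Corollary 1.4.41]{LazarsfeldI} (sharp growth of $h^0$), whereas you simply observe that big-and-nef forces $\nu(L)=d$ and invoke the already-proved sandwich $\nu(L)\leq \ph(-\otimes L)\leq d$ from Proposition~\ref{prop:TensorLineBundle}; this is a cleaner argument that avoids duplicating the work of part (iii) of that proposition. For the nef-not-big case, the paper cites \cite[Theorem 1.4.40 and Corollary 1.4.41]{LazarsfeldI} as black boxes (the former gives the stronger $h^k = O(n^{d-k})$), whereas you reprove the needed estimates directly from Fujita vanishing: restrict to a very ample divisor $D$ to reduce $h^k$ for $k\geq 1$ to a cohomology group on a scheme of dimension $d-1$, then use asymptotic Riemann--Roch together with $L^d=0$ to handle $h^0$. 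This is precisely the induction-on-dimension mechanism behind Lazarsfeld's Theorem 1.4.40, and it is in fact the same scheme the paper carries out in full in Proposition~\ref{prop:GrowthNefDivisor} (stated there for an arbitrary numerical dimension); so your argument is self-contained where the paper defers to a reference, and it anticipates the more general statement. The only small point to keep in mind is that $D$ should be chosen general (Bertini) so that it is a smooth irreducible projective variety of dimension $d-1$, making the invocation of \cite[Example 1.2.33]{LazarsfeldI} on $D$ entirely unproblematic; since $\mathcal{F}=G^\vee\otimes G$ is locally free, the restriction sequence is exact with no further fuss.
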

\begin{proof}This is a special case of Theorem \ref{thm:TensorNefLineBundle} below, but we prefer to give a direct proof here.
If $L$ is big and nef, then by \cite[Corollary 1.4.41]{LazarsfeldI}, $\dim H^0(X, G^\vee\otimes G\otimes L^{\otimes n})$ grows exactly as a polynomial of degree $d$. Hence $\epsilon(G, G\otimes L^{\otimes n})$ grows at least as a polynomial of degree $d$. Therefore $\ph(-\otimes L)\geq d$. We can conclude as Proposition \ref{prop:TensorLineBundle} $(ii)$ provides the other inequality.\\
If $L$ is nef but not big, on one hand, \cite[Theorem 1.4.40]{LazarsfeldI} implies that the growth of the $k$-th term of \eqref{eqn:SumHk} is at most like a polynomial of degree $d-k$:
$$\dim H^k(X, G^\vee\otimes G\otimes L^{\otimes n})= O(n^{d-k}).$$
On the other hand, the hypothesis implies that $(c_1(L)^d)=0$, then \cite[Corollary 1.4.41]{LazarsfeldI} says that the $0$-th term of \eqref{eqn:SumHk} also grows at most as a polynomial of degree $d-1$. As a result, $\epsilon(G, G\otimes L^{\otimes n})$ grows at most as a polynomial of degree $d-1$. Therefore $\ph(-\otimes L)\leq d-1$.
\end{proof}

To generalize Lemma \ref{lemma:NefBig}, we first establish the following result on the growth of cohomology of powers of nef line bundles, which will be used in the proof of Theorem \ref{thm:TensorNefLineBundle}. It is probably well-known to experts, but as we cannot find a reference, a proof is provided for the sake of completeness. 
\begin{prop}\label{prop:GrowthNefDivisor}
Let $X$ be a smooth projective variety over an algebraically closed field $\k$. Let $L$ be a nef line bundle and $\mathcal{F}$ be a coherent sheaf on $X$. Then
\[\dim H^k(X, \mathcal{F}\otimes L^{\otimes n})= O(n^{\nu(L)}),\]
that is, there exists a constant $C>0$ such that for all $k\geq 0$ and $n\geq 0$, we have
\[\dim H^k(X, \mathcal{F}\otimes L^{\otimes n})\leq  Cn^{\nu(L)},\]
where $\nu(L)$ is the numerical dimension of $L$.
\end{prop}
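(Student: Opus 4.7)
The plan is to proceed by induction on $d = \dim X$. The base case $d = 0$ is immediate: $L$ is trivial, so $h^0(X, \mathcal{F}\otimes L^{\otimes n})$ is constant in $n$ and $h^k=0$ for $k>0$. The inductive step rests on two uses of a fixed very ample divisor $A$ on $X$: first, using Fujita's vanishing to handle a well-chosen twist $\mathcal{G} = \mathcal{F}(m_0 A)$, and then walking back from $\mathcal{G}$ to $\mathcal{F}$ through a finite chain of short exact sequences coming from divisors in $|A|$.

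More precisely, I would apply Fujita's vanishing theorem to produce an integer $m_0$ such that $H^k(X, \mathcal{F}(m_0 A) \otimes N) = 0$ for every $k \geq 1$ and every nef line bundle $N$ on $X$. Setting $\mathcal{G} \coloneqq \mathcal{F}(m_0 A)$ and $N = L^{\otimes n}$ gives $h^k(X, \mathcal{G}\otimes L^{\otimes n}) = 0$ for all $k>0$ and $n \geq 0$, while
\[h^0(X, \mathcal{G}\otimes L^{\otimes n}) = \chi(X, \mathcal{G}\otimes L^{\otimes n}) = \sum_{j=0}^{d} \tfrac{n^j}{j!} \int_X \ch(\mathcal{G}) \cdot c_1(L)^j \cdot \td(X)\]
by Hirzebruch--Riemann--Roch. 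Since $c_1(L)^j$ is numerically trivial for $j > \nu(L)$, only the terms with $j \leq \nu(L)$ survive, so this Euler characteristic is a polynomial in $n$ of degree at most $\nu(L)$. Thus $h^k(X, \mathcal{G}\otimes L^{\otimes n}) = O(n^{\nu(L)})$ for every $k$. Now set $\mathcal{G}_i \coloneqq \mathcal{G}(-iA)$, so $\mathcal{G}_0 = \mathcal{G}$ and $\mathcal{G}_{m_0} = \mathcal{F}$, and choose general smooth hyperplane sections $Y_i \in |A|$ avoiding the associated points of $\mathcal{G}_{i-1}$. The ideal-sheaf sequence
\[0 \to \mathcal{G}_i \otimes L^{\otimes n} \to \mathcal{G}_{i-1} \otimes L^{\otimes n} \to \mathcal{G}_{i-1}|_{Y_i} \otimes L|_{Y_i}^{\otimes n} \to 0\]
yields, via its long exact sequence,
\[h^k(X, \mathcal{G}_i \otimes L^{\otimes n}) \leq h^k(X, \mathcal{G}_{i-1} \otimes L^{\otimes n}) + h^{k-1}(Y_i, \mathcal{G}_{i-1}|_{Y_i} \otimes L|_{Y_i}^{\otimes n}).\]
Iterating from $i=1$ to $m_0$ bounds $h^k(X, \mathcal{F}\otimes L^{\otimes n})$ by $h^k(X, \mathcal{G}\otimes L^{\otimes n})$ plus a finite sum of terms from the $Y_i$'s. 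The inductive hypothesis applied to each $(d-1)$-dimensional $Y_i$, together with the easy bound $\nu(L|_{Y_i}) \leq \nu(L)$ (restricting a numerically trivial cycle remains numerically trivial on $Y_i$), shows that each such term is $O(n^{\nu(L)})$. Uniformity of the constant $C$ over all $k$ is automatic since $h^k = 0$ for $k > d$.

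The main obstacle, I expect, is assembling the inductive framework itself rather than any single calculation. A more direct attempt — resolving $\mathcal{F}$ by direct sums of line bundles $\mathcal{O}_X(-m_i A)^{\oplus r_i}$ and applying an Euler-characteristic bound termwise via a spectral sequence — fails on a general smooth projective variety because such finite resolutions need not exist (the last kernel is locally free but typically not a sum of line bundles). The Fujita-plus-restriction argument above handles all coherent $\mathcal{F}$ uniformly. A secondary technical point is ensuring that the $Y_i$ can be chosen smooth in positive characteristic, which is handled by replacing $A$ with a sufficiently large power so that Bertini applies; this does not affect the structure of the argument.
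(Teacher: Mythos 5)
Your proof is correct and follows essentially the same line as the paper's: induction on $\dim X$, Fujita's vanishing to kill higher cohomology after a twist, Hirzebruch--Riemann--Roch to bound the remaining $h^0$, and passage to a general hyperplane section together with the observation $\nu(L|_Y) \leq \nu(L)$. The paper's version is slightly leaner in its organization: rather than walking back from $\mathcal{F}(m_0 A)$ to $\mathcal{F}$ through a chain of $m_0$ ideal-sheaf sequences in $|A|$, it uses a single ideal-sheaf sequence for the very ample divisor $H$ furnished by Fujita (playing the role of your $m_0 A$), reading off the bound for $k>0$ directly from the inductive hypothesis applied to $H$ and the bound for $k=0$ from HRR applied to $\chi(X, \mathcal{F}\otimes\mathcal{O}_X(H)\otimes L^{\otimes n})$.
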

\begin{proof}
The proof is similar to \cite[Theorem 1.4.40]{LazarsfeldI}. We proceed by induction on the dimension of $X$. By Fujita's vanishing theorem \cite{Fujita}, there is a very ample divisor $H$ such that 
\begin{equation}\label{eqn:FujitaVanishing}
    H^k(X, \mathcal{F}\otimes \mathcal{O}_X(H)\otimes L^{\otimes n})=0
\end{equation}
for all $k>0$ and all $n\geq 0$.
Up to replacing $H$ by a general member in its linear system, we can assume that $H$ is smooth and does not contain any subvariety of $X$ defined by associated primes of $\mathcal{F}$. Therefore we have the exact sequence
\[0\to \mathcal{F}\otimes L^{\otimes n}\to \mathcal{F}\otimes L^{\otimes n}\otimes \mathcal{O}_X(H)\to\mathcal{F}\otimes L^{\otimes n}\otimes \mathcal{O}_X(H)\otimes O_H\to 0 \]
for any $n\geq 0$. Let us look at the corresponding long exact sequence of cohomology groups.

For any $k>0$, by the vanishing hypothesis \eqref{eqn:FujitaVanishing}, we have $$\dim H^k(X, \mathcal{F}\otimes L^{\otimes n})\leq \dim H^{k-1}(H,\mathcal{F}\otimes L^{\otimes n}\otimes \mathcal{O}_X(H)\otimes O_H),$$
which is $O(n^{\nu(L|_H)})$ by the induction hypothesis. As $\nu(L|_H)\leq \nu(L)$, we conclude that $\dim H^k(X, \mathcal{F}\otimes L^{\otimes n})=O(n^{\nu(L)})$.

For $k=0$, again by \eqref{eqn:FujitaVanishing},
$$\dim H^0(X, \mathcal{F}\otimes L^{\otimes n})\leq \dim H^{0}(X,\mathcal{F}\otimes L^{\otimes n}\otimes \mathcal{O}_X(H))=\chi(X, \mathcal{F}\otimes L^{\otimes n}\otimes \mathcal{O}_X(H)).$$
Thanks to the Hirzebruch--Riemann--Roch formula, $\chi(X, \mathcal{F}\otimes L^{\otimes n}\otimes \mathcal{O}_X(H))$ is a polynomial in $n$ of the form $$\sum_{i=0}^d n^i\int_X c_1(L)^i\alpha_i,$$
where $\alpha_i$ is some $i$-dimensional algebraic cycle of $X$. As $c_1(L)^i\equiv 0$ for all $i>\nu(L)$, the above polynomial is of degree at most $\nu(L)$. Therefore we can conclude that $\dim H^0(X, \mathcal{F}\otimes L^{\otimes n})=O(n^{\nu(L)})$.

The induction process is complete.
\end{proof}

We are ready to determine the polynomial entropy for tensoring a nef line bundle.
\begin{thm}[Nef line bundles]\label{thm:TensorNefLineBundle}
Let $L$ be a line bundle on a smooth projective variety $X$ defined over an algebraically closed field $\k$. If $L$ or $L^{-1}$ is nef, then the polynomial entropy of the functor $-\otimes L$ is equal to the numerical dimension of $L$: $$\ph(-\otimes L)=\nu(L).$$
\end{thm}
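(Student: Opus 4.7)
The plan is to combine the Yomdin-type lower bound from Proposition~\ref{prop:TensorLineBundle}(iii) with a cohomological upper bound coming directly from Proposition~\ref{prop:GrowthNefDivisor}. The lower bound $\ph(-\otimes L)\geq \nu(L)$ is already established (without any positivity assumption on $L$), so the content of the theorem is the matching upper bound $\ph(-\otimes L)\leq \nu(L)$ under the nefness hypothesis.

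First I would reduce to the case that $L$ itself is nef. If $L^{-1}$ is nef, then using Lemma~\ref{lemma:Inverse} applied to the autoequivalence $-\otimes L$, whose inverse is $-\otimes L^{-1}$, we get $\ph(-\otimes L)=\ph(-\otimes L^{-1})$; and $\nu(L)=\nu(L^{-1})$ since the numerical dimension depends only on $c_1(L)^2,c_1(L)^3,\ldots$ up to sign. So it is enough to treat the nef case.

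Next, fix a locally free split generator $G$ of $\cD^b(X)$, for instance $G=\bigoplus_{i=0}^{d}\cO_X(i)$ where $d=\dim X$. Since $h_t(-\otimes L)\equiv 0$ by \eqref{eqn:CatEntropyTensorLineBundle}, and since $\cD^b(X)$ is saturated, Lemma~\ref{lemm:EntropyInExt-distance} gives
\[
\ph(-\otimes L)=\limsup_{n\to\infty}\frac{\log\epsilon(G,G\otimes L^{\otimes n})}{\log(n)}
=\limsup_{n\to\infty}\frac{\log\sum_{k=0}^{d}\dim H^k(X,G^\vee\otimes G\otimes L^{\otimes n})}{\log(n)}.
\]
Now apply Proposition~\ref{prop:GrowthNefDivisor} to the coherent sheaf $\mathcal{F}:=G^\vee\otimes G$ (which makes sense because $G$ is locally free): there is a constant $C>0$ such that $\dim H^k(X,\mathcal{F}\otimes L^{\otimes n})\leq Cn^{\nu(L)}$ for every $k\geq 0$ and $n\geq 0$. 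Summing the finitely many terms $k=0,\ldots,d$, we get $\epsilon(G,G\otimes L^{\otimes n})=O(n^{\nu(L)})$, so that $\ph(-\otimes L)\leq \nu(L)$. Combined with Proposition~\ref{prop:TensorLineBundle}(iii), this yields equality.

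The only step with any substance is the uniform polynomial bound on the higher cohomology of $\mathcal{F}\otimes L^{\otimes n}$ for $L$ nef, but that work has already been carried out in Proposition~\ref{prop:GrowthNefDivisor}. The rest is bookkeeping: taking a locally free generator so that $G^\vee\otimes G$ is an honest coherent sheaf, and invoking the previously established lower bound, the triviality of $h_t(-\otimes L)$, and Lemma~\ref{lemma:Inverse} to handle the $L^{-1}$-nef case.
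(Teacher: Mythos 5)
Your proof is correct and follows essentially the same route as the paper: reduce to the nef case via Lemma~\ref{lemma:Inverse}, apply Proposition~\ref{prop:GrowthNefDivisor} to $\mathcal{F}=G^\vee\otimes G$ for a locally free split generator $G$ to get the upper bound $\ph(-\otimes L)\leq\nu(L)$, and combine with the lower bound from Proposition~\ref{prop:TensorLineBundle}(iii). No gaps.
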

\begin{proof}
We only need to treat the case where $L$ is nef, as the anti-nef case follows from the nef case by using Lemma \ref{lemma:Inverse}. Keep the notation in the proof of Proposition \ref{prop:TensorLineBundle}. Applying Proposition \ref{prop:GrowthNefDivisor} to the case $\mathcal{F}=G^\vee\otimes G$, we obtain that each term on the right-hand side of \eqref{eqn:SumHk} is $O(n^{\nu(L)})$, thus so is $\epsilon(G, G\otimes L^{\otimes n})$. Consequently, $\ph(-\otimes L)\leq \nu(L)$.
Combined with the lower bound in Proposition \ref{prop:TensorLineBundle} $(iii)$,  we must have an equality.
\end{proof}

The nefness assumption in Theorem  \ref{thm:TensorNefLineBundle} can not be removed. Let us give a simple example of line bundle whose polynomial entropy is strictly bigger than its numerical dimension. 
\begin{eg}\label{ex:GYCouldBeStrict}
Let $S$ be a smooth projective surface and $H$ be an ample divisor on it. Assume that the degree of $S$ is a square, that is, $(H^2)=m^2$ for some $m\in \mathbb{Z}_{>0}$; for example when $S=\mathbb{P}^2$ and $H=c_1(\mathcal{O}_{\mathbb{P}^2}(1))$. Let $\tau\colon S'\to S$ be the blow up of $S$ at a point. Let $H'\coloneqq \tau^*(H)$ be the pullback of the polarization and $E$ the exceptional divisor. We have that $({H'}^2)=m^2$, $(H'\cdot E)=0$, and $(E^2)=-1$. Consider $D\coloneqq H'+mE$. Hence $(D^2)=0$ and the numerical dimension of $\mathcal{O}_{S'}(D)$ is 1. On the other hand, we claim that $\ph(-\otimes \mathcal{O}_{S'}(D))=2$. Indeed, let us fix split generators $G=\bigoplus_{i=0}^{2}\mathcal{O}(i)$ and $G'=G^\vee$ where $\mathcal{O}(1)$ is a very ample line bundle on $S'$ (see \cite{Orlov}). Then $\epsilon(G', G\otimes \mathcal{O}_{S'}(nD))\geq h^0(S', G\otimes G\otimes \mathcal{O}_{S'}(nD))\geq h^0(S', \mathcal{O}_{S'}(nD)^{\oplus 9})$, which has quadratic growth rate in $n$, since $D$ is a big divisor. 
% \begin{itemize}
%     \item $h^0(S', G\otimes G\otimes \mathcal{O}_{S'}(nD))\geq h^0(S', \mathcal{O}_{S'}(nD)^{\oplus 9})$, which has quadratic growth rate in $n$, since $D$ is a big divisor. 
%     \item By Serre duality, $h^2(S', G\otimes G\otimes \mathcal{O}_{S'}(nD))=h^0(S', \omega_{S'}\otimes G^\vee\otimes G^\vee\otimes \mathcal{O}_{S'}(-nD))$,  which vanishes when $n$ is sufficiently large. \item By the Hirzebruch--Riemann--Roch formula, $$\chi(S', G\otimes G\otimes \mathcal{O}_{S'}(nD))=\int_{S'} \operatorname{ch}(G\otimes G)\operatorname{ch}(\mathcal{O}_{S'}(nD))\operatorname{td}(S)= \int_{S'}\operatorname{ch}(G\otimes G)e^{nD}\operatorname{td}(S),$$ which has linear growth rate in $n$ as $(D^2)=0$.
%     \item We deduce that $h^1((S', G\otimes G\otimes \mathcal{O}_{S'}(nD))$ must also have quadratic growth rate in $n$.
% \end{itemize}
%Therefore 
%$\epsilon(G', G\otimes \mathcal{O}_{S'}(nD))=\sum_{k=0}^{2}h^k(S', G\otimes G\otimes \mathcal{O}_{S'}(nD))$ has also quadratic growth in $n$; 
Hence $\ph(-\otimes \mathcal{O}_{S'}(D))=2$. By taking product of $S'$ with another variety and pulling back the divisor to the product, one can also produce examples where the polynomial entropy is strictly between the numerical dimension and the dimension.
\end{eg}

\begin{cor}[Numerical nature of entropy]\label{cor:NumericalNature}
Let $L$ be a line bundle on a smooth projective variety $X$. The polynomial entropy $\ph(-\otimes L)$ depends only on the numerical class  of $L$.
\end{cor}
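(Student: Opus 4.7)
The plan is to reduce the statement to computing the polynomial entropy of tensoring with a numerically trivial line bundle, and then to apply the multiplicativity result for commuting autoequivalences.

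Suppose $L$ and $L'$ are two line bundles on $X$ with the same numerical class, and set $M \coloneqq L' \otimes L^{-1}$. Then $M$ is a numerically trivial line bundle, so in particular $c_1(M) \equiv 0$, which means $(M \cdot C) = 0$ for every irreducible curve $C \subset X$. Hence both $M$ and $M^{-1}$ are nef, with numerical dimension $\nu(M) = 0$. Applying Theorem~\ref{thm:TensorNefLineBundle} to $M$ yields
\[
\ph(-\otimes M) = \nu(M) = 0,
\]
and of course $h^\cat(-\otimes M) = 0$ by~\eqref{eqn:CatEntropyTensorLineBundle} (the categorical entropy of tensoring by any line bundle vanishes).

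Now I would invoke Lemma~\ref{lemma:composition}(ii). The category $\cD^b(X)$ is saturated and admits a Serre functor, the functor $-\otimes M$ is an autoequivalence, and it commutes with $-\otimes L$ since tensoring with line bundles is symmetric monoidal. Because $h^\cat(-\otimes M) = 0$ and $\ph(-\otimes M) = 0$, the ``moreover'' part of Lemma~\ref{lemma:composition}(ii) gives
\[
\ph\bigl((-\otimes M)\circ(-\otimes L)\bigr) = \ph(-\otimes L).
\]
But $(-\otimes M)\circ(-\otimes L) \cong -\otimes (L\otimes M) = -\otimes L'$, so
\[
\ph(-\otimes L') = \ph(-\otimes L),
\]
which is the desired numerical invariance. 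No serious obstacle arises here; the content has essentially been packaged into Theorem~\ref{thm:TensorNefLineBundle} (which handles the numerically trivial case via the nefness of both $M$ and $M^{-1}$) and into the composition Lemma~\ref{lemma:composition}, both of which we may assume.
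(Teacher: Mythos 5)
Your proof is correct and follows essentially the same route as the paper's: apply Theorem~\ref{thm:TensorNefLineBundle} to the numerically trivial difference line bundle to get vanishing polynomial entropy, then invoke Lemma~\ref{lemma:composition}(ii) for the commuting functors. The only difference is that you spell out the hypotheses of the composition lemma more explicitly, which the paper leaves implicit.
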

\begin{proof}
For any numerically trivial (in particular nef) line bundle $L_0$, Theorem \ref{thm:TensorNefLineBundle} implies that $h^\cat(-\otimes L_0)=\ph(-\otimes L_0)=0$. Applying Lemma \ref{lemma:composition} to the commuting functors $F_1=-\otimes L_0$ and $F_2=-\otimes L$, we see that $\ph(-\otimes L\otimes L_0)=\ph(-\otimes L)$.
\end{proof}

We can deduce from the above results some information about the mass growth of the functor $-\otimes L$:
\begin{cor}
Let $L$ be a line bundle on a smooth projective variety $X$. Assume that $\cD^b(X)$ admits a stability condition $\sigma$ that factors through the numerical Grothendieck group. Then $h_\sigma (-\otimes L)=0$ and $$\ph(-\otimes L)\geq \ph_\sigma(-\otimes L)\geq \nu(L).$$ In particular, if $L$ or $L^{-1}$ is nef, then $\ph_\sigma(-\otimes L)=\nu(L)$.
\end{cor}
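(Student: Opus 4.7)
The plan is to assemble the conclusion from pieces already established in the paper: the vanishing of $h_\sigma(-\otimes L)$ from Ikeda's comparison with categorical entropy combined with a Yomdin bound, the middle inequality from Lemma \ref{lemma:ComparingEntropyMass}, and the lower bound $\ph_\sigma(-\otimes L)\geq \nu(L)$ from the Yomdin-type estimate in Proposition \ref{prop:GYLowerBoundMassGrowth}. The nef case then comes for free from Theorem \ref{thm:TensorNefLineBundle} by sandwiching.

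First, I would verify that $h_\sigma(-\otimes L)=0$. One direction uses Ikeda's inequality $h_\sigma(F)\leq h^\cat(F)$ together with the vanishing $h^\cat(-\otimes L)=0$ recorded in \eqref{eqn:CatEntropyTensorLineBundle}. For the reverse inequality, I invoke Ikeda's lower bound $h_\sigma(F)\geq \log\rho(\mathcal{N}(F))$ (cited as \cite[Theorem 1.2]{Ikeda} in our Theorem in the introduction); the endomorphism $\mathcal{N}(-\otimes L)$ corresponds under the Mukai vector isomorphism $\mathcal{N}(X)_\bQ\xrightarrow{\cong} N^*(X)_\bQ$ (Remark \ref{rmk:NumericalGps}) to multiplication by $e^{c_1(L)}$, a unipotent endomorphism, hence has spectral radius $1$. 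Therefore $h_\sigma(-\otimes L)\geq 0$, and we get $h_\sigma(-\otimes L)=0$.

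Next, the inequality $\ph(-\otimes L)\geq \ph_\sigma(-\otimes L)$ follows from Lemma \ref{lemma:ComparingEntropyMass} applied at $t=0$, since we just checked $h_\sigma(-\otimes L)=h^\cat(-\otimes L)=0$. For the inequality $\ph_\sigma(-\otimes L)\geq \nu(L)$, I apply Proposition \ref{prop:GYLowerBoundMassGrowth}: the hypothesis $h_\sigma(F)=\log\rho(\mathcal{N}(F))$ holds (both sides are zero), so $\ph_\sigma(-\otimes L)\geq s(\mathcal{N}(-\otimes L))$. The computation of this polynomial growth rate was already carried out inside the proof of Proposition \ref{prop:TensorLineBundle}(iii): via the Mukai vector, $\mathcal{N}(-\otimes L)$ is conjugate to multiplication by $e^{c_1(L)}=\sum_{k=0}^{\nu(L)}\frac{1}{k!}c_1(L)^k$ on $N^*(X)_\bQ$, a unipotent operator whose largest Jordan block has size $\nu(L)+1$, so $s(\mathcal{N}(-\otimes L))=\nu(L)$.

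Finally, for the ``in particular'' clause, if $L$ or $L^{-1}$ is nef, then Theorem \ref{thm:TensorNefLineBundle} gives $\ph(-\otimes L)=\nu(L)$. Combining with what we just proved,
\[
\nu(L)\leq \ph_\sigma(-\otimes L)\leq \ph(-\otimes L)=\nu(L),
\]
forcing equality throughout, in particular $\ph_\sigma(-\otimes L)=\nu(L)$. There is no real obstacle here; the whole statement is a clean corollary of the machinery, and the only mild point to be careful about is ensuring the Yomdin lower bound for polynomial mass growth applies in the numerical setting, which is exactly the hypothesis ``$\sigma$ factors through the numerical Grothendieck group'' built into the corollary.
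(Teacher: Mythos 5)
Your proof is correct and follows essentially the same route as the paper: establish $h_\sigma(-\otimes L)=0$ via Ikeda's comparison with $h^\cat$, apply Lemma \ref{lemma:ComparingEntropyMass} for the middle inequality, apply Proposition \ref{prop:GYLowerBoundMassGrowth} together with the Mukai-vector computation of $s(\mathcal{N}(-\otimes L))=\nu(L)$ from Proposition \ref{prop:TensorLineBundle}(iii) for the lower bound, and sandwich with Theorem \ref{thm:TensorNefLineBundle} for the nef case. The only cosmetic difference is in how you get $h_\sigma(-\otimes L)\geq 0$: you invoke Ikeda's numerical lower bound $h_\sigma(F)\geq\log\rho(\mathcal{N}(F))$, whereas the paper cites Ikeda's Proposition 3.11 (non-negativity of mass growth for autoequivalences); both yield the same conclusion with no substantive change in the argument.
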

\begin{proof}
By \cite[Theorem 3.5(2) and Proposition 3.11]{Ikeda} and the fact that $h^\cat(-\otimes L)=0$, we have $h_\sigma (-\otimes L)=0$. Hence the hypothesis of Lemma \ref{lemma:ComparingEntropyMass} is satisfied for $t=0$, and we have $\ph(-\otimes L)\geq \ph_\sigma(-\otimes L).$ The lower bound $\ph_\sigma(-\otimes L)\geq \nu(L)$ is actually the Yomdin-type inequality established Proposition \ref{prop:GYLowerBoundMassGrowth}, since $\nu(L)$ is the polynomial growth rate of the endomorphism $\cdot [L]$ of $\mathcal{N}(X)$ by the proof of Proposition \ref{prop:TensorLineBundle} $(iii)$. The last statement follows from Theorem \ref{thm:TensorNefLineBundle}.
\end{proof}

%\begin{rmk}[Iitaka dimension]
%In Proposition \ref{prop:TensorLineBundle}, one could wonder whether $\ph(-\otimes L)$ admits a better upper bounded by $\kappa(L)$, the Iitaka dimension of $L$, provided that $L$ is nef. However, viewing Corollary \ref{cor:NumericalNature}, this does not seem to be a good question as $\kappa(L)$ does not only depend on the numerical class of $L$.
%\end{rmk}

\begin{rmk}[Serre functor for a variety]
Let $X$  be a smooth projective variety. The Serre functor of the derived category $\cD^b(X)$ is given by $\mathbf{S}_X=-\otimes \omega_X[\dim(X)]$. Its categorical entropy is computed in \cite[\S 2.6.2]{DHKK}, namely, $h_t(\mathbf{S}_X)=\dim(X)t$.
As for its polynomial entropy, we see a close link with the birational geometry of $X$:  using Lemma \ref{lemma:Shifts}, Proposition \ref{prop:TensorLineBundle}, and Theorem \ref{thm:TensorNefLineBundle}, we obtain that $\ph_t(\mathbf{S}_X)$ is a constant function with value 
\[\ph(\mathbf{S}_X)=\ph(-\otimes \omega_X),\] which is 
\begin{itemize}
    \item between the numerical dimension of $\omega_X$ and $\dim(X)$;
    \item equal to $\dim(X)$ if $X$ is minimal and of general type;
    \item equal to $\dim(X)$ if 
$\omega_X^\vee$ is nef and big, i.e.~$X$ is a weak Fano variety;
    \item equal to the numerical dimension of $\omega_X$ if $X$ is a minimal model. Assuming the abundance conjecture when $\operatorname{char}(\k)=0$, then it is equal to the Kodaira dimension of $X$.
\end{itemize}

\end{rmk}

\subsection{Spherical twists (I)}
\label{sec:sphericaltwists}
Let $\cD$ be a saturated triangulated category endowed with a Serre functor $\mathbf{S}$. Recall that an object $\mathcal{E}$ in $\cD$ is called $d$-\textit{spherical} for some positive integer $d$, if $\mathbf{S}(\mathcal{E})\cong\mathcal{E}[d]$ and $\dim\Hom_\cD(\mathcal{E}, \mathcal{E}[*])=\dim H^*(\mathbb{S}^d, \mathbb{Q})$, where $\mathbb{S}^d$ is the $d$-dimensional topological sphere. Examples of spherical objects are line bundles in the derived category of Calabi--Yau varieties, the structure sheaf of (-2)-curves on K3 surfaces etc. 

The most interesting feature of spherical objects is that they induce  autoequivalences of the triangulated category \cite{SeidelThomas, MR3692883},  called \textit{spherical twists}. More precisely, for a $d$-spherical object $\mathcal{E}$, the spherical twist around $\mathcal{E}$ is the autoequivalence 
\begin{eqnarray*}
T_\mathcal{E}\colon \cD&\to& \cD\\
E &\mapsto& \operatorname{Cone}(\operatorname{RHom}(\mathcal{E}, E)\otimes \mathcal{E}\to E).
\end{eqnarray*}
%By using Fourier--Mukai kernels, the ambiguity of cones can be avoided (see \cite{SeidelThomas}).
It follows from definition that $T_\mathcal{E}(\mathcal{E})\cong \mathcal{E}[1-d]$ and $T_\mathcal{E}|_{\mathcal{E}^\perp}\cong \operatorname{id}.$

To study the polynomial entropy of spherical twists, we need the following estimate, which improves upon \cite[Proof of Theorem 3.1]{Ouchi}.

\begin{lemma}[Upper bound]\label{lemma:UpperBound}
For any objects $G, G'\in \cD$ and any positive integer $n$, we have
\[\epsilon_t(G', T^n_\mathcal{E}(G))\leq 
\begin{cases}
ne^tA_t+B_t &\text{ if } d=1;\\
nA_t+B_t &\text{ if } t=0;\\
\frac{e^{(1-d)nt}}{e^{(1-d)t}-1}A_t+B_t &\text{ if } t<0 \text{ and } d\geq 2;\\
\frac{e^t}{1-e^{(1-d)t}} A_t+B_t &\text{ if } t>0 \text{ and } d\geq 2;\\
\end{cases}\]
where $A_t=\epsilon_t(G', \operatorname{RHom}(\mathcal{E}, G)\otimes \mathcal{E})$ and $B_t=\epsilon_t(G', G)$ are positively valued functions, which are independent  of $n$.
\end{lemma}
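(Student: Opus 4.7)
The plan is to derive a recursion for $a_n \coloneqq \epsilon_t(G', T_\mathcal{E}^n(G))$ from the defining triangle of the spherical twist, explicitly resolve the error terms using the $d$-spherical identity $T_\mathcal{E}(\mathcal{E})\cong \mathcal{E}[1-d]$, and then sum the resulting geometric series case by case.

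First, I would rotate the defining triangle $\operatorname{RHom}(\mathcal{E},E)\otimes \mathcal{E}\to E\to T_\mathcal{E}(E)\xrightarrow{+1}$ to
\[
E\to T_\mathcal{E}(E)\to \big(\operatorname{RHom}(\mathcal{E},E)\otimes \mathcal{E}\big)[1]\xrightarrow{+1}
\]
and apply subadditivity of $\epsilon_t$ (Remark~\ref{rmk:PropOfComplexity}(ii)). Combined with the shift rule $\epsilon_t(G',X[1])=e^t\epsilon_t(G',X)$, which is immediate from the defining formula $\epsilon_t(M,N)=\sum_k\dim\Hom(M,N[k])e^{-kt}$, and setting $E=T_\mathcal{E}^{n-1}(G)$, this yields
\[
a_n\leq a_{n-1}+e^t\,\epsilon_t\!\big(G',\operatorname{RHom}(\mathcal{E},T_\mathcal{E}^{n-1}(G))\otimes \mathcal{E}\big).
\]

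The key input is the identification $\operatorname{RHom}(\mathcal{E},T_\mathcal{E}^{n-1}(G))\cong \operatorname{RHom}(\mathcal{E},G)[(n-1)(1-d)]$, obtained from $T_\mathcal{E}(\mathcal{E})\cong \mathcal{E}[1-d]$ together with the autoequivalence property of $T_\mathcal{E}$ via $\operatorname{RHom}(\mathcal{E},T_\mathcal{E}^{n-1}(G))=\operatorname{RHom}(T_\mathcal{E}^{-(n-1)}\mathcal{E},G)$. Substituting this back and using the shift rule once more yields
\[
a_n\leq a_{n-1}+e^{t+(n-1)(1-d)t}\,A_t,
\]
and iterating from $a_0=B_t$ gives
\[
a_n\leq B_t+e^tA_t\sum_{j=0}^{n-1}e^{j(1-d)t}.
\]

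The four cases of the lemma now fall out by direct evaluation of this sum: when $d=1$ or $t=0$ the exponents vanish and the sum equals $n$, giving the first two bounds; when $d\geq 2$ and $t>0$, the geometric ratio $e^{(1-d)t}<1$ and the partial sum is bounded by $1/(1-e^{(1-d)t})$, producing $\frac{e^t}{1-e^{(1-d)t}}A_t+B_t$; when $d\geq 2$ and $t<0$, the ratio exceeds $1$ and the partial sum is bounded by $e^{n(1-d)t}/(e^{(1-d)t}-1)$, after which the factor $e^t<1$ can be dropped to match the stated form. I expect the main obstacle to be justifying the identification $\operatorname{RHom}(\mathcal{E},T_\mathcal{E}^{n-1}(G))\cong \operatorname{RHom}(\mathcal{E},G)[(n-1)(1-d)]$ cleanly (tracking the iterated autoequivalence action on the spherical object); the rest is bookkeeping of exponents and geometric series.
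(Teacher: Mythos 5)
Your proof is correct and essentially matches the paper's: both derive the same recursion $a_n\leq a_{n-1}+e^{((1-d)n+d)t}A_t$ from the defining triangle and $T_\mathcal{E}(\mathcal{E})\cong\mathcal{E}[1-d]$, then sum the geometric series case by case. The only cosmetic difference is that the paper obtains the shift $[(1-d)(n-1)]$ by applying $T_\mathcal{E}^{n-1}$ to the defining triangle for $G$, whereas you apply the triangle directly to $T_\mathcal{E}^{n-1}(G)$ and then invoke $\operatorname{RHom}(\mathcal{E},T_\mathcal{E}^{n-1}(G))\cong\operatorname{RHom}(T_\mathcal{E}^{-(n-1)}\mathcal{E},G)$; the former sidesteps the adjunction step you flagged as a possible obstacle, but both are straightforwardly valid.
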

\begin{proof}
For any $n>0$, applying $T_\mathcal{E}^{n-1}$ to the distinguished triangle
\[\operatorname{RHom}(\mathcal{E}, G)\otimes \mathcal{E}\to G\to T_\mathcal{E}(G)\xrightarrow{+1},\]
we get 
\[\operatorname{RHom}(\mathcal{E}, G)\otimes \mathcal{E}[(1-d)(n-1)]\to T^{n-1}_\mathcal{E}(G)\to T^n_\mathcal{E}(G)\xrightarrow{+1}.\]
Therefore,
$\epsilon_t(G', T^n_\mathcal{E}(G))\leq \epsilon_t(G',  T^{n-1}_\mathcal{E}(G))+A_te^{((1-d)n+d)t}$,
hence
\[\epsilon_t(G', T^n_\mathcal{E}(G))\leq \epsilon_t(G', G)+A_t\sum_{i=1}^ne^{((1-d)i+d)t}=B_t+A_t\sum_{i=1}^ne^{((1-d)i+d)t}.\]
The cases when $d=1$ or $t=0$ follow immediately. Assume now $d\geq 2$ and $t\neq 0$. Then 
\[\epsilon_t(G', T^n_\mathcal{E}(G))\leq B_t+A_t\frac{e^t}{e^{(1-d)t}-1}(e^{(1-d)tn}-1).\]
One can conclude easily by separating  the cases $t>0$ and $t<0$.
\end{proof}

\begin{prop}\label{prop:SphercialTwist}
Let $d$ be a positive integer.  Let $\mathcal{E}$ be a $d$-spherical object in a saturated triangulated category $\cD$.
\begin{enumerate}[label=(\roman*)]
    \item If $d=1$, then $0\leq \ph_t(T_\mathcal{E})\leq 1$.
    \item If $t=0$, then $0\leq \ph(T_\mathcal{E})\leq 1$.
    \item  If $d\geq 2$ and $t< 0$, then $\ph_t(T_\mathcal{E})=0$.
    \item If $d\geq 2$ and $t>0$, assuming moreover that $\mathcal{E}^\perp\coloneqq
\{E\in \cD\mid \Hom_\cD(\mathcal{E}, E[k])=0 \text{ for all } k\in \mathbb{Z}\}\neq 0$, then $\ph_t(T_\mathcal{E})=0$.
\end{enumerate}
\end{prop}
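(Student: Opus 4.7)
The key input is Lemma \ref{lemma:UpperBound}, which provides tight upper bounds on $\epsilon_t(G', T_\mathcal{E}^n(G))$ in each of the four regimes. My plan is to first pin down the categorical entropy $h_t(T_\mathcal{E})$ by combining this upper bound with a matching lower bound derived from the defining identities $T_\mathcal{E}(\mathcal{E}) \cong \mathcal{E}[1-d]$ and $T_\mathcal{E}|_{\mathcal{E}^\perp} \cong \operatorname{id}$, and then divide by $\log(n)$ to extract the polynomial entropy via Lemma \ref{lemm:EntropyInExt-distance}. Throughout, I would choose a split generator of the form $G = G_0 \oplus \mathcal{E}$ (or $G = G_0 \oplus E$ with $E \in \mathcal{E}^\perp$ in case (iv)) so that the relevant summand of $T_\mathcal{E}^n(G)$ is explicitly understood.

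For (i), with $d=1$, Lemma \ref{lemma:UpperBound} yields $\epsilon_t(G', T_\mathcal{E}^n(G)) \leq n e^t A_t + B_t$, hence $h_t(T_\mathcal{E}) \leq 0$. Since $T_\mathcal{E}(\mathcal{E}) \cong \mathcal{E}$, the quantity $\epsilon_t(\mathcal{E}, T_\mathcal{E}^n(\mathcal{E})) = \epsilon_t(\mathcal{E}, \mathcal{E})$ is a positive constant in $n$, which forces $h_t(T_\mathcal{E}) = 0$. Dividing the $O(n)$ upper bound by $\log(n)$ then gives $\ph_t(T_\mathcal{E}) \leq 1$, while the constant lower bound gives $\ph_t(T_\mathcal{E}) \geq 0$. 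Case (ii) is analogous: the lemma at $t=0$ gives $\epsilon \leq O(n)$, and $h^\cat \geq 0$ holds automatically, so $h^\cat(T_\mathcal{E}) = 0$ and the same division yields $0 \leq \ph(T_\mathcal{E}) \leq 1$.

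For (iii), with $d \geq 2$ and $t < 0$, the lemma yields $\epsilon_t \leq C_t \cdot e^{(1-d)nt}$ with $C_t$ independent of $n$. The matching lower bound comes from $T_\mathcal{E}^n(\mathcal{E}) \cong \mathcal{E}[(1-d)n]$: the $\operatorname{Ext}$-distance $\epsilon_t(\mathcal{E}, \mathcal{E}[(1-d)n])$ is computed directly from the $d$-spherical cohomology of $\mathcal{E}$, producing $\epsilon_t = e^{(1-d)nt} + e^{((1-d)n - d)t}$. For $t < 0$ and $d \geq 2$ both terms grow like $e^{(1-d)nt}$, so $h_t(T_\mathcal{E}) = (1-d)t$. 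The upper and lower bounds on $\log \epsilon_t - n h_t$ now differ only by an $O(1)$ additive term, and dividing by $\log(n)$ gives $\ph_t(T_\mathcal{E}) = 0$.

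For (iv), with $d \geq 2$, $t > 0$, and $E \in \mathcal{E}^\perp \setminus \{0\}$, Lemma \ref{lemma:UpperBound} provides an $n$-independent upper bound on $\epsilon_t$. Using $T_\mathcal{E}(E) = E$ and enlarging any split generator to contain $E$ as a summand gives $\epsilon_t(G, T_\mathcal{E}^n(G)) \geq \epsilon_t(E, E) > 0$, a positive constant in $n$, so $h_t(T_\mathcal{E}) = 0$ and $\ph_t(T_\mathcal{E}) = 0$. The main subtlety I expect is not computational but rather the combinatorial one of synchronizing the upper bound coming from Lemma \ref{lemma:UpperBound} with the correct lower bound in each regime in order to certify that $h_t$ takes the claimed value; the polynomial entropy then follows by a uniform $\log(n)^{-1}$ rescaling.
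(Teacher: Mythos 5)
Your proof is correct and follows essentially the same route as the paper: Lemma \ref{lemma:UpperBound} supplies the upper bounds, while the identities $T_\mathcal{E}(\mathcal{E})\cong\mathcal{E}[1-d]$ and $T_\mathcal{E}|_{\mathcal{E}^\perp}\cong\operatorname{id}$ applied to a summand of a suitably enlarged split generator supply the matching lower bounds. The only cosmetic difference is that you re-derive the value of $h_t(T_\mathcal{E})$ from these two bounds, whereas the paper simply cites Ouchi's computation and then runs the same upper/lower comparison directly at the level of $\ph_t$.
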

\begin{proof}
In any of the cases $(i) \sim (iv)$, Ouchi \cite{Ouchi} computed the categorical entropy of the spherical twist $T_\mathcal{E}$:
\[h_t(T_\mathcal{E})=
\begin{cases} (1-d)t, &\quad \text{if } t\leq 0;\\ 0 & \quad\text{if } t>0.
\end{cases}\]
Fix two split generators $G, G'$ of $\cD$. We first establish, in any case of $(i) \sim (iv)$, the lower bound that
\[\ph_t(T_\mathcal{E})\geq 0\]
For $(i)\sim (iii)$, $h_t(T_\mathcal{E})=(1-d)t$. As $G\oplus \mathcal{E}$ is also a split generator,
\begin{align*}
\ph_t(T_\mathcal{E})&=\limsup_{n \to \infty}\frac{\log \epsilon_t(G', T^n_\mathcal{E}(G\oplus \mathcal{E}))-h_t(T_\mathcal{E})}{\log(n)}\\
&\geq \limsup_{n \to \infty}\frac{\log \epsilon_t(G', T^n_\mathcal{E}(\mathcal{E}))-n(1-d)t}{\log(n)}\\
&=\limsup_{n \to \infty}\frac{\log \epsilon_t(G', \mathcal{E}[(1-d)n])-n(1-d)t}{\log(n)}\\
&=\limsup_{n \to \infty}\frac{\log \epsilon_t(G', \mathcal{E})}{\log(n)}\\
&=0.
\end{align*}
Here we used the fact that $T_\mathcal{E}(\mathcal{E})\cong \mathcal{E}[1-d]$.\\
As for $(iv)$, take  $E$ a non-zero object in $\mathcal{E}^\perp$, as $G\oplus E$ is also a split generator,
\begin{align*}
\ph_t(T_\mathcal{E})&=\limsup_{n \to \infty}\frac{\log \epsilon_t(G', T^n_\mathcal{E}(G\oplus E))-h_t(T_\mathcal{E})}{\log(n)}\\
&\geq \limsup_{n \to \infty}\frac{\log \epsilon_t(G', T^n_\mathcal{E}(E))}{\log(n)}\\
&=\limsup_{n \to \infty}\frac{\log \epsilon_t(G', E)}{\log(n)}\\
&=0,
\end{align*}
where we used the fact that $T_\mathcal{E}(E)\cong E$.\\

Let us now establish the upper bounds in the statement:\\
$(i)$ When $d=1$, $h_t(T_\mathcal{E})=0$ and Lemma \ref{lemma:UpperBound} says that $$\epsilon_t(G', T^n_\mathcal{E}(G))\leq ne^tA_t+B_t.$$
Therefore,
\[\ph_t(T_\mathcal{E})=\limsup_{n \to \infty}\frac{\log \epsilon_t(G', T^n_\mathcal{E}(G))}{\log(n)}\leq \frac{\log(ne^tA_t+B_t)}{\log(n)}=1.\]
%On the other hand, $G\oplus \mathcal{E}$ is also a split generator, hence
%$$\ph_t(T_\mathcal{E})=\limsup_{n \to \infty}\frac{\log \epsilon_t(G', T^n_\mathcal{E}(G\oplus \mathcal{E}))}{\log(n)}\geq \limsup_{n \to \infty}\frac{\log \epsilon_t(G', T^n_\mathcal{E}(\mathcal{E}))}{\log(n)}=0,$$
%where we used in the last equality the fact that $T_\mathcal{E}(\mathcal{E})\cong \mathcal{E}$.\\
$(ii)$. When $t=0$, Lemma \ref{lemma:UpperBound} implies that 
\[\ph(T_\mathcal{E})=\limsup_{n \to \infty}\frac{\log \epsilon_t(G', T^n_\mathcal{E}(G))}{\log(n)}\leq \frac{\log(nA_t+B_t)}{\log(n)}=1.\]
$(iii)$. When $t<0$ and $d\geq 2$, using Lemma \ref{lemma:UpperBound},  we see that 
\begin{align*}
\ph_t(T_\mathcal{E})&=\limsup_{n \to \infty}\frac{\log \epsilon_t(G', T^n_\mathcal{E}(G))-nh_t(T_\mathcal{E})}{\log(n)}\\
&\leq\limsup_{n\to \infty} \frac{\log(\frac{e^{(1-d)nt}}{e^{(1-d)t}-1}A_t+B_t)-n(1-d)t}{\log(n)}\\
&=0.
\end{align*}
%On the other hand, using again that $G\oplus \mathcal{E}$ is also a split generator,
%\begin{eqnarray*}
%\ph_t(T_\mathcal{E})&=&\limsup_{n \to \infty}\frac{\log \epsilon_t(G', T^n_\mathcal{E}(G\oplus \mathcal{E}))-h_t(T_\mathcal{E})}{\log(n)}\\
%&\geq& \limsup_{n \to \infty}\frac{\log \epsilon_t(G', T^n_\mathcal{E}(\mathcal{E}))-n(1-d)t}{\log(n)}\\
%&=&\limsup_{n \to \infty}\frac{\log \epsilon_t(G', \mathcal{E}[(1-d)n])-n(1-d)t}{\log(n)}\\
%&=&\limsup_{n \to \infty}\frac{\log \epsilon_t(G', \mathcal{E})}{\log(n)}\\
%&=&0.
%\end{eqnarray*}
$(iv)$ When $t>0$ and $d\geq 2$, assuming $\mathcal{E}^\perp\neq 0$, then Lemma \ref{lemma:UpperBound} yields that
\begin{align*}
\ph_t(T_\mathcal{E})&=\limsup_{n \to \infty}\frac{\log \epsilon_t(G', T^n_\mathcal{E}(G))-nh_t(T_\mathcal{E})}{\log(n)}\\
&\leq\limsup_{n\to \infty} \frac{\log(\frac{e^t}{1-e^{(1-d)t}} A_t+B_t)}{\log(n)}\\
&=0.\qedhere
\end{align*}
\end{proof}
\begin{rmk}
When $\cD$ is the derived category of a projective K3 surface, the condition on the orthogonal complement in Proposition \ref{prop:SphercialTwist} $(iv)$ is often satisfied (for example when Picard number is 1), see Bayer's appendix of \cite{Ouchi}.
\end{rmk}

\subsection{Spherical twists (II): quiver Calabi--Yau categories}
\label{sec:sphericaltwistquiver}
Let $Q$ be an acyclic quiver with vertices labelled by $\{1,2,\ldots,n\}$, and let $\cD_Q$ be the $3$-Calabi--Yau category constructed from the Ginzburg Calabi--Yau dg-algebra associated to the quiver $Q$ \cite{Ginzburg,Keller}.
For each vertex $1\leq i\leq n$, there is an associated spherical object $S_i\in\cD_Q$.
We denote $T_i$ the spherical twist associated to $S_i$.
The morphisms between spherical objects are determined by the quiver: if there are $e_{ij}$ arrows from vertex $i$ to vertex $j$, then $\Hom^\bullet(S_i,S_j) = \bC^{\oplus e_{ij}}[-1]$.
By Lemma \ref{lemma:algstabilitycond} and the same computations as in \cite[Section~4.2]{Ikeda}, we have
\[
h^\pol(T_i^k) = \lim_{n\ra\infty} \frac{\log\ell(n)}{\log n} = 1
\]
for any power $k$ of the spherical twist $T_i$.
Here $\ell(n)$ is a linear polynomial which depends on $k$ and the valency of the $i$-th vertex.

We focus on the case of the $3$-Calabi--Yau category $\cD=\cD_{A_2}$ associated to the $A_2$-quiver
\[
\bullet \ra \bullet.
\]
The subgroup of $\Aut(\cD)$ generated by the spherical twists $T_1,T_2$ is isomorphic to the standard braid group on $3$ strings
\[
\left< T_1, T_2 \right> \cong \mathrm{Br}_3 =
\left< \sigma_1, \sigma_2: \sigma_1\sigma_2\sigma_1=\sigma_2\sigma_1\sigma_2 \right>,
\]
cf.~\cite[Section~2]{BQS20}.
There is a short exact sequence
\[
1\ra\bZ\ra\left< T_1, T_2 \right>\ra\mathrm{PSL}(2,\bZ)\ra1,
\]
where the map $\bZ\ra\left< T_1, T_2 \right>$ is given by sending $1$ to $(T_1T_2)^3=[5]$, and the map
\[
\phi\colon\left< T_1, T_2 \right>\ra\mathrm{PSL}(2,\bZ)
\]
is given by the induced action on the numerical Grothendieck group $\cN(\cD)$:
\[
T_1\mapsto
\begin{pmatrix}
1 & 1\\
0 & 1
\end{pmatrix}
\text{, }
T_2\mapsto
\begin{pmatrix}
1 & 0\\
-1 & 1
\end{pmatrix}
\]
with respect to the basis $\{[S_1],[S_2]\}$ of $\cN(\cD)$.

We prove the following trichotomy.

\begin{prop}
Let $F\in\left<T_1,T_2\right>\subset\Aut(\cD)$.
%Let $\phi\colon\left< T_1, T_2 \right>\ra\mathrm{PSL}(2,\bZ)$ be the map above.
Then
    \begin{enumerate}[label=(\roman*)]
        \item $h^\pol(F)=h^\cat(F)=0$ if and only if
        $\cN(F)$ is elliptic (i.e.~$|\tr(\cN(F))|<2$) or 
        $\cN(F)=\pm\mathrm{id}$. In this case, $F^n=[m]$ for some integers $n,m$.
        
        \item $h^\pol(F)>0$ and $h^\cat(F)=0$ if and only if
        $\cN(F)$ is parabolic (i.e.~$|\tr(\cN(F))|=2$)
        and $\cN(F)\neq\pm\mathrm{id}$. In this case, $\ph(F)=1$.
        
        \item $h^\cat(F)>0$ if and only if 
        $\cN(F)$ is hyperbolic (i.e.~$|\tr(\cN(F))|>2$).
        In this case, $F$ is \emph{pseudo-Anosov} in the sense of \cite{FFHKL}.
    \end{enumerate}
Moreover, the Gromov--Yomdin-type equality holds for both categorical entropy and categorical polynomial entropy of any $F\in\left<T_1,T_2\right>$:
\[
h^\cat(F) = \log \rho(\cN(F)) \text{ \ and \ } h^\pol(F) = s(\cN(F)).
\]
\end{prop}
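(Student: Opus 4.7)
The plan is to exploit the short exact sequence
\[
1 \to \bZ \xrightarrow{1 \mapsto [5]} \langle T_1, T_2 \rangle \xrightarrow{\phi} \mathrm{PSL}(2, \bZ) \to 1,
\]
so that the trichotomy for $F$ reduces to the classical elliptic/parabolic/hyperbolic trichotomy for $\phi(F) = \cN(F)$. Since shifts $[m]$ are central in $\Aut(\cD)$, every conjugation in $\langle T_1, T_2 \rangle$ lifts conjugation in $\mathrm{PSL}(2, \bZ)$ up to a shift, which is what will let me transfer entropy computations from normal forms to arbitrary $F$.

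\emph{Elliptic case.} If $\phi(F)$ has finite order $n$ (the stated conditions), then $F^n$ lies in $\ker\phi$, so $F^n = [m]$ for some $m$. Writing $N = nq + r$ with $0 \leq r < n$, I get $\delta_t(G, F^N(G)) = e^{mqt}\,\delta_t(G, F^r(G))$, exactly as in the fractional Calabi--Yau Serre-functor remark after Lemma \ref{lemma:ShiftsMassGrowth}. This gives $h_t(F) = mt/n$ and makes $\log \delta_t(G, F^N(G)) - N h_t(F)$ bounded in $N$, so $h^\cat(F) = h^\pol(F) = 0$.

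\emph{Parabolic case.} A parabolic element of $\mathrm{PSL}(2, \bZ)$ is conjugate to $\phi(T_1)^k$ for some $k \neq 0$; lifting through $\phi$ and absorbing shifts via centrality, $F$ is conjugate in $\langle T_1, T_2 \rangle$ to $T_1^k \circ [5j]$ for some $j$. Combining Lemma \ref{lemma:CommutationConjugation} (conjugation invariance), Lemma \ref{lemma:Shifts} (shifts do not affect $h^\pol$), together with the analogous identity $h^\cat(F[m]) = h^\cat(F)$ and the computations $h^\cat(T_1^k) = 0$ and $h^\pol(T_1^k) = 1$ recalled just before the proposition (via Lemma \ref{lemma:algstabilitycond} and the algebraic stability condition of \cite[Section~4.2]{Ikeda}), I conclude $h^\cat(F) = 0$ and $h^\pol(F) = 1$.

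\emph{Hyperbolic case, Gromov--Yomdin, and the main obstacle.} Here $|\tr(\cN(F))| > 2$ with $\det(\cN(F)) = \pm 1$ forces $\rho(\cN(F)) > 1$, so Proposition \ref{prop:GYLowerBoundPolEntropy} gives $h^\cat(F) \geq \log \rho(\cN(F)) > 0$; moreover $\cN(F)$ is diagonalizable with two real eigenvalues of distinct modulus, so $s(\cN(F)) = 0$. The matching \emph{upper} bounds are the heart of the argument. I would pick the algebraic stability condition $\sigma$ on $\cD_{A_2}$ whose heart is the extension closure of $S_1, S_2$, so that Lemma \ref{lemma:algstabilitycond} identifies $h^\cat(F) = h_\sigma(F)$ and $h^\pol(F) = h^\pol_\sigma(F)$, reducing everything to mass-growth estimates. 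Then I would invoke the pseudo-Anosov / dilation structure for hyperbolic braids on $\cD_{A_2}$ from \cite{BQS20, FFHKL}, which produces an $F$-invariant ray in $\Stab(\cD)$ dilated by $\rho(\cN(F))$; this simultaneously certifies $F$ as pseudo-Anosov in the sense of \cite{FFHKL} and forces $h_\sigma(F) = \log\rho(\cN(F))$ with no subexponential correction, yielding $h^\pol_\sigma(F) = 0 = s(\cN(F))$. The technical obstacle is precisely locating this dilated stability condition (i.e.\ the Perron--Frobenius eigenvector of $\cN(F)$ acting on $\Stab(\cD)$); once it is available, the elliptic and parabolic cases are pure bookkeeping on top of the short exact sequence.
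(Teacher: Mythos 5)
Your handling of the elliptic and parabolic cases is correct and essentially equivalent to the paper's; indeed, your observation that $\phi(F)$ of finite order $n$ forces $F^n\in\ker\phi=\bZ\cdot[5]$, hence $F^n=[m]$, is slightly cleaner than the paper's route of first listing explicit conjugacy representatives. The parabolic case matches the paper: conjugate to $T_1^k$ up to a shift, then apply Lemmas~\ref{lemma:CommutationConjugation}, \ref{lemma:Shifts}, \ref{lemma:algstabilitycond} and the computation $h^\pol(T_i^k)=1$.

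The hyperbolic case is where a genuine gap remains, and you flag it yourself. You propose to produce an $F$-invariant dilated ray in $\Stab(\cD)$ and deduce $h_\sigma(F)=\log\rho(\cN(F))$ with no subexponential correction — but you do not actually locate such a ray, and that step is the whole difficulty. The paper bypasses this by a more explicit route: any infinite-order conjugacy class in $\mathrm{PSL}(2,\bZ)$ has a Dunbar normal form
\[
\begin{pmatrix} 1 & 1\\ 0 & 1 \end{pmatrix}^{a_1}\begin{pmatrix} 1 & 0\\ 1 & 1 \end{pmatrix}^{b_1}\cdots\begin{pmatrix} 1 & 1\\ 0 & 1 \end{pmatrix}^{a_n}\begin{pmatrix} 1 & 0\\ 1 & 1 \end{pmatrix}^{b_n}
\]
with non-negative exponents, so by conjugation/shift invariance one reduces to $F=T_1^{a_1}T_2^{-b_1}\cdots T_1^{a_n}T_2^{-b_n}$. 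Then \cite[Theorem~3.1]{FFHKL} computes the mass growth and polynomial mass growth of $S_1\oplus S_2$ under such $F$ directly, identifying them with $\rho(\cN(F))$ and $s(\cN(F))=0$, and simultaneously certifying $F$ as pseudo-Anosov. That is: instead of asking for a fixed point of the projective $F$-action on $\Stab(\cD)$ (which is indeed what \cite{FFHKL} ultimately produces, but only after reduction to the normal form), the paper exploits the combinatorics of $\mathrm{PSL}(2,\bZ)$ to land in the exact setting where the FFHKL computation applies. If you want your route to close, you should first perform the Dunbar normal-form reduction — at which point you are doing the paper's argument — or else supply an independent proof that every hyperbolic element of $\langle T_1, T_2\rangle$ admits a dilated stability condition, which is not a black box you can pull from \cite{BQS20,FFHKL} without the normal form.
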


\begin{proof}
$(i)$. An elliptic element in $\mathrm{PSL}(2,\bZ)$ is conjugate to either
\[
\begin{pmatrix}
0 & 1 \\
-1 & 0
\end{pmatrix}
\text{, }
\begin{pmatrix}
1 & 1 \\
-1 & 0
\end{pmatrix}
\text{, or }
\begin{pmatrix}
0 & 1 \\
-1 & -1
\end{pmatrix}.
\]
Observe that
\[
\phi(T_1T_2T_1)=
\begin{pmatrix}
0 & 1 \\
-1 & 0
\end{pmatrix},
\ 
\phi(T_2T_1)=
\begin{pmatrix}
1 & 1 \\
-1 & 0
\end{pmatrix},
\text{ and }
\phi((T_2T_1)^2)=
\begin{pmatrix}
0 & 1 \\
-1 & -1
\end{pmatrix}.
\]
Hence if $\cN(F)$ is elliptic, then
\[
F = g F' g^{-1} [k]
\]
for some $g\in\left<T_1,T_2\right>$, $F'\in\{T_1T_2T_1,\ T_2T_1,\ (T_2T_1)^2\}$, and $k\in\bZ$.
Since
\[
T_1T_2T_1,\ T_2T_1,\ (T_2T_1)^2
\]
are of finite order up to shifts, hence $F^n=[m]$ for some integers $n,m$.
In this case, we have
\[
h^\cat(F)=h^\pol(F)=\log\rho(\cN(F))=s(\cN(F))=0.
\]
$(ii)$. A parabolic element in $\mathrm{PSL}(2,\bZ)$ is conjugate to
\[
\phi(T_1^n) = 
\begin{pmatrix}
1 & n \\
0 & 1
\end{pmatrix}
\]
for some $n\in\bZ$.
Hence if $\cN(F)$ is parabolic and $\cN(F)\neq\pm\mathrm{id}$, then
\[
F = g T_1^n g^{-1} [k]
\]
for some $g\in\left<T_1,T_2\right>$, $n\in\bZ\bs\{0\}$, and $k\in\bZ$.
By the previous computations, we have
\[
h^\cat(F)=0 \text{ \ and \ } h^\pol(F)=1.
\]
Since $\cN(F)$ in this case is quasi-unipotent and has a single Jordan block of size $2$, we have
\[
h^\cat(F)=\log \rho(\cN(F))=0 \text{ \ and \ } h^\pol(F)=s(\cN(F))=1.
\]
$(iii)$. The spectral radius of a hyperbolic element is greater than $1$.
Thus, if $\cN(F)$ is hyperbolic, then
$h^\cat(F) \geq \log\rho(\cN(F)) > 0$.
We will show that in fact the equality holds.
Any conjugacy class in $\mathrm{PSL}(2,\bZ)$ of infinite order has a representative of the form
\[
\begin{pmatrix}
1 & 1\\
0 & 1
\end{pmatrix}^{a_1}
\begin{pmatrix}
1 & 0\\
1 & 1
\end{pmatrix}^{b_1}
\cdots
\begin{pmatrix}
1 & 1\\
0 & 1
\end{pmatrix}^{a_n}
\begin{pmatrix}
1 & 0\\
1 & 1
\end{pmatrix}^{b_n}
\]
for some non-negative integer exponents $a_1,b_1,\ldots,a_n,b_n\in\bZ$ (see for instance \cite[Proposition~2.3]{Dunbar}).
Therefore, if $\cN(F)$ is hyperbolic, then $F$ is conjugate to an autoequivalence of the form
\[
T_1^{a_1}T_2^{-b_1}\cdots T_1^{a_n}T_2^{-b_n}
\]
up to shifts.
By the computations in \cite[Theorem~3.1]{FFHKL},
the mass growth and the polynomial mass growth of the split generator $S_1\oplus S_2$ of such an autoequivalence
coincide with the spectral radius and the polynomial growth rate of the corresponding element in $\mathrm{PSL}(2,\bZ)$.
The eigenvalues of $\cN(F)$ are $\rho(\cN(F))>1$ and $1>\rho(\cN(F))^{-1}>0$, each with a Jordan block of size one.
Hence
\[
h^\cat(F)=\log \rho(\cN(F))>0 \text{ and } h^\pol(F)=s(\cN(F))=0.
\]
Moreover, such autoequivalence is pseudo-Anosov in the sense of \cite{FFHKL}, by \cite[Theorem~3.1]{FFHKL}.
\end{proof}

\begin{rmk}[Discontinuity of categorical polynomial entropy functions]
The spherical twists $T_1$ and $T_2$ on this quiver $3$-Calabi--Yau category provide examples of autoequivalences with discontinuous categorical polynomial entropy functions.
Indeed, by Proposition \ref{prop:SphercialTwist} $(iii)$,
\[
h_t^\pol(T_1)=h_t^\pol(T_2)=0 \text{ for } t<0,
\]
and since $T_1$ and $T_2$ are parabolic, we have
\[
h_0^\pol(T_1)=h_0^\pol(T_2)=1.
\]
Hence the categorical polynomial entropy functions of $T_1$ and $T_2$ are discontinuous at $t=0$.
\end{rmk}

\subsection{P-twists}
\label{sec:P-twists}
As an analogue of spherical objects, Huybrechts--Thomas \cite{HuybrechtsThomas} studied the so-called $\mathbb{P}$-objects. Recall that given a saturated triangulated category $\cD$ endowed with a Serre functor $\mathbf{S}$, an object $\mathcal{E}$ in $\cD$ is called a $\mathbb{P}^d$-\textit{object} for some positive integer $d$, if $\mathbf{S}(\mathcal{E})\cong\mathcal{E}[2d]$ and $\Hom_\cD(\mathcal{E}, \mathcal{E}[*])\cong H^*(\mathbb{P}^d_\mathbb{C}, \mathbb{Z})\otimes \k$ as $\k$-algebras. Examples of $\mathbb{P}$-objects include line bundles in the derived category of a projective hyper-K\"ahler manifold, the structure sheaf of an embedded $\mathbb{P}^d$ inside a $2d$-dimensional holomorphic symplectic variety, etc. 

Similar to spherical twits, any $\mathbb{P}$-object $\mathcal{E}$ also induces an autoequivalence $P_\mathcal{E}$ of the triangulated category \cite{HuybrechtsThomas},  called $\mathbb{P}$-\textit{twists}. We refer to \cite{HuybrechtsThomas} for the precise definition and properties. Note that $P_\mathcal{E}(\mathcal{E})\cong \mathcal{E}[-2d]$ and $P_\mathcal{E}|_{\mathcal{E}^\perp}\cong \operatorname{id}.$

As an analogue of Lemma \ref{lemma:UpperBound}, we have the following estimate, which improves upon \cite[Proof of Theorem  3.1]{FanPtwists}.

\begin{lemma}[Upper bound]\label{lemma:UpperBound2}
For any objects $G, G'\in \cD$ and any positive integer $n$, we have
\[\epsilon_t(G', P^n_\mathcal{E}(G))\leq 
\begin{cases}
nA_t+B_t &\text{ if } t=0;\\
\frac{e^{-2dnt}}{e^{-2dt}-1}A_t+B_t &\text{ if } t< 0;\\
\frac{e^t}{1-e^{-2dt}} A_t+B_t &\text{ if } t>0,\\
\end{cases}\]
where $A_t=\epsilon_t(G', C)$ and $B_t=\epsilon_t(G', G)$ are positively valued functions (independent  of $n$).  Here $C:=\operatorname{Cone}(\Hom(\mathcal{E}, G[*-2])\otimes \mathcal{E}\to\Hom(\mathcal{E}, G[*])\otimes \mathcal{E})$.
\end{lemma}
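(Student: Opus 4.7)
The plan is to mirror the strategy of Lemma \ref{lemma:UpperBound} almost verbatim, replacing the defining triangle of the spherical twist by the one governing the $\mathbb{P}$-twist and using the key identity $P_\mathcal{E}(\mathcal{E})\cong\mathcal{E}[-2d]$ (in the role of $T_\mathcal{E}(\mathcal{E})\cong\mathcal{E}[1-d]$) to compute how $C$ transforms under iteration. By the Huybrechts--Thomas construction \cite{HuybrechtsThomas}, there is a distinguished triangle
\[
C \to G \to P_\mathcal{E}(G) \xrightarrow{+1},
\]
with $C$ as defined in the statement. Applying $P_\mathcal{E}^{n-1}$ produces the triangle
\[
P_\mathcal{E}^{n-1}(C) \to P_\mathcal{E}^{n-1}(G) \to P_\mathcal{E}^n(G) \xrightarrow{+1}.
\]

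The first key step is to identify $P_\mathcal{E}^{n-1}(C)\cong C[-2d(n-1)]$. This is where a small care is needed: since $C$ is the cone of a morphism between direct sums of shifts of $\mathcal{E}$, and $P_\mathcal{E}$ is a triangulated autoequivalence commuting with shifts and with the relevant direct sums, the identity $P_\mathcal{E}(\mathcal{E}[k])\cong\mathcal{E}[k-2d]$ implies $P_\mathcal{E}(C)\cong C[-2d]$, and iteration gives the claim. This is the only genuinely new input compared to the spherical case; the rest is essentially bookkeeping.

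With this identification in hand, the subadditivity of $\epsilon_t$ (Remark \ref{rmk:PropOfComplexity}(ii)) together with the shift identity $\epsilon_t(G', N[j])=e^{jt}\epsilon_t(G', N)$ yields the recursive estimate
\[
\epsilon_t(G', P_\mathcal{E}^n(G)) \leq \epsilon_t(G', P_\mathcal{E}^{n-1}(G)) + A_t\, e^{-2d(n-1)t}.
\]
Iterating $n$ times produces the geometric sum $B_t + A_t\sum_{k=0}^{n-1} e^{-2dkt}$, and the three cases in the statement follow by elementary evaluation: the sum equals $n$ at $t=0$, is bounded by $\frac{e^{-2dnt}}{e^{-2dt}-1}$ for $t<0$ (since it is a finite geometric sum with ratio $>1$), and is bounded by $\frac{1}{1-e^{-2dt}}\leq\frac{e^t}{1-e^{-2dt}}$ for $t>0$ (using $e^t\geq 1$ to match the written form).

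The main obstacle, as noted above, is the verification that $P_\mathcal{E}^{n-1}(C)\cong C[-2d(n-1)]$; this has to be checked carefully from the explicit description of $C$ as a cone built out of $\mathcal{E}$ rather than taken for granted, but once established, the rest of the argument is parallel to the spherical case and should present no difficulty.
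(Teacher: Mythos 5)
Your overall strategy matches the paper's proof exactly: apply $P_\mathcal{E}^{n-1}$ to the Huybrechts--Thomas triangle, use $P_\mathcal{E}(C)\cong C[-2d]$ to pull out the shift, iterate, and sum a geometric series. The caveat you raise about $P_\mathcal{E}^{n-1}(C)\cong C[-2d(n-1)]$ is fair (the paper asserts it without comment, and a fully careful argument would check that $P_\mathcal{E}$ sends the defining morphism of the cone to a nonzero multiple of the original one, not merely that it sends sources and targets to the right shifts), but this is a side remark, not the main issue.

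The genuine gap is in your recursive estimate. Subadditivity (Remark \ref{rmk:PropOfComplexity}(ii)) bounds $\epsilon_t(M, N)$ for the \emph{middle} term $N$ of a distinguished triangle. In the triangle
\[
C[-2d(n-1)]\longrightarrow P_\mathcal{E}^{n-1}(G)\longrightarrow P_\mathcal{E}^n(G)\xrightarrow{+1},
\]
the object you want to bound, $P_\mathcal{E}^n(G)$, sits in the third slot; to apply subadditivity you must first rotate to
\[
P_\mathcal{E}^{n-1}(G)\longrightarrow P_\mathcal{E}^n(G)\longrightarrow C[-2d(n-1)+1]\xrightarrow{+1},
\]
which shifts the cone term by $[1]$ and therefore multiplies its contribution by $e^t$. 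The correct recursion is
\[
\epsilon_t(G', P_\mathcal{E}^n(G)) \leq \epsilon_t(G', P_\mathcal{E}^{n-1}(G)) + A_t\, e^{(1-2d(n-1))t},
\]
not $A_t\, e^{-2d(n-1)t}$ as you wrote. (You can see the same $+1$ in the spherical-twist Lemma~\ref{lemma:UpperBound}, whose exponent is $((1-d)n+d)t=((1-d)(n-1)+1)t$.) For $t>0$ your inequality is strictly stronger than what subadditivity delivers, so as stated it is unjustified; already at $n=1$ it would assert $\epsilon_t(G',P_\mathcal{E}(G))\leq B_t+A_t$, whereas the rotated triangle only gives $B_t+A_te^{t}$. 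You happen to land on the correct final bound because you later loosen by exactly the missing factor with the remark that ``$e^t\geq1$'' — in effect silently undoing the error rather than matching a cosmetic convention. Once the $e^t$ is restored in the recursion, summing gives $B_t+A_t\sum_{i=0}^{n-1}e^{(1-2di)t}=B_t+A_t\frac{e^t}{e^{-2dt}-1}\left(e^{-2dnt}-1\right)$ for $t\neq0$, and the three displayed cases follow; the rest of your argument is then identical to the paper's.
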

\begin{proof}
The proof is similar to Lemma \ref{lemma:UpperBound}. Applying $P_\mathcal{E}^{n-1}$ to the distinguished triangle 
\[C\to G\to P_\mathcal{E}(G)\xrightarrow{+1},\]
we get 
\[C[-2d(n-1)]\to P^{n-1}_\mathcal{E}(G)\to P^n_\mathcal{E}(G)\xrightarrow{+1}.\]
Therefore,
$\epsilon_t(G', P^n_\mathcal{E}(G))\leq \epsilon_t(G',  P^{n-1}_\mathcal{E}(G))+A_te^{(1-2d(n-1))t}$,
hence
\[\epsilon_t(G', P^n_\mathcal{E}(G))\leq B_t+A_t\sum_{i=0}^{n-1}e^{(1-2di)t}.\]
The case $t=0$ follows immediately. For $t\neq 0$, we get 
\[\epsilon_t(G', P^n_\mathcal{E}(G))\leq B_t+A_t\frac{e^t}{e^{-2dt}-1}(e^{-2dtn}-1).\]
One can conclude easily by separating  the cases $t>0$ and $t<0$.
\end{proof}

\begin{prop}\label{prop:PTwist}
Let $d$ be a positive integer.  Let $\mathcal{E}$ be a $\mathbb{P}^d$-object in a saturated triangulated category $\cD$. Then
\begin{enumerate}[label=(\roman*)]
    \item If $t<0$, then $\ph_t(P_\mathcal{E})=0$;
    \item If $t>0$ and $\mathcal{E}^\perp \neq 0$, then $\ph_t(P_\mathcal{E})=0$;
    \item If $t=0$, then $0\leq \ph(P_\mathcal{E})\leq 1$.
\end{enumerate}
\end{prop}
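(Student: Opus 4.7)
The plan is to mirror the proof of Proposition~\ref{prop:SphercialTwist}, applying Lemma~\ref{lemma:UpperBound2} in place of Lemma~\ref{lemma:UpperBound}, and exploiting the two defining properties of the $\mathbb{P}$-twist: $P_\mathcal{E}(\mathcal{E})\cong\mathcal{E}[-2d]$ (hence $P^n_\mathcal{E}(\mathcal{E})\cong\mathcal{E}[-2dn]$) and $P_\mathcal{E}|_{\mathcal{E}^\perp}\cong\operatorname{id}$. First I would record (as in \cite{FanPtwists}) the categorical entropy, namely
\[
h_t(P_\mathcal{E}) = \begin{cases} -2dt & \text{if } t\leq 0,\\ 0 & \text{if } t>0 \text{ and } \mathcal{E}^\perp\neq 0, \end{cases}
\]
which follows by combining Lemma~\ref{lemma:UpperBound2} (whose dominant term is $e^{-2dnt}$ for $t<0$, linear in $n$ at $t=0$, and uniformly bounded for $t>0$) with matching lower bounds coming from $\epsilon_t(G',P^n_\mathcal{E}(\mathcal{E}))=e^{-2dnt}\epsilon_t(G',\mathcal{E})$ when $t\leq 0$, and from $\epsilon_t(G',P^n_\mathcal{E}(E))=\epsilon_t(G',E)$ for a nonzero $E\in\mathcal{E}^\perp$ when $t>0$.

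For the upper bounds on $\ph_t(P_\mathcal{E})$, fix split generators $G,G'$ and invoke Lemma~\ref{lemma:UpperBound2}. In case (i), the bound gives $\log\epsilon_t(G',P^n_\mathcal{E}(G))=-2dnt+O(1)$, so $\log\epsilon_t(G',P^n_\mathcal{E}(G))-n h_t(P_\mathcal{E})=O(1)$, and dividing by $\log(n)$ yields $\ph_t(P_\mathcal{E})\leq 0$. In case (ii), the estimate in Lemma~\ref{lemma:UpperBound2} is already $O(1)$ uniformly in $n$ while $h_t(P_\mathcal{E})=0$, hence again $\ph_t(P_\mathcal{E})\leq 0$. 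In case (iii), the bound $nA_0+B_0$ together with $h^\cat(P_\mathcal{E})=0$ gives
\[
\ph(P_\mathcal{E})\leq \limsup_{n\to\infty}\frac{\log(nA_0+B_0)}{\log(n)}=1.
\]

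For the lower bound $\ph_t(P_\mathcal{E})\geq 0$, in cases (i) and (iii) I use the split generator $G\oplus\mathcal{E}$ and the identity $P^n_\mathcal{E}(\mathcal{E})\cong\mathcal{E}[-2dn]$, which gives
\[
\log\epsilon_t(G',P^n_\mathcal{E}(G\oplus\mathcal{E}))-n h_t(P_\mathcal{E}) \geq -2dnt+\log\epsilon_t(G',\mathcal{E})-n h_t(P_\mathcal{E}) = \log\epsilon_t(G',\mathcal{E})=O(1),
\]
so the $\limsup$ over $\log(n)$ is $\geq 0$. For case (ii) this argument collapses since $e^{-2dnt}\to 0$; instead I take $G\oplus E$ with $0\neq E\in\mathcal{E}^\perp$ and use $P^n_\mathcal{E}(E)=E$ to conclude $\epsilon_t(G',P^n_\mathcal{E}(G\oplus E))\geq\epsilon_t(G',E)>0$, again a positive constant.

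The proof is essentially routine given Lemma~\ref{lemma:UpperBound2}; the main subtlety is bookkeeping the correct normalization $n h_t(P_\mathcal{E})$ in each regime of $t$, and observing that at $t=0$ the \emph{linear-in-$n$} term in Lemma~\ref{lemma:UpperBound2} — coming from summing a geometric series with ratio $e^{-2dt}$ exactly at the value $t=0$ where that ratio equals $1$ — is what produces the non-trivial upper bound $\ph(P_\mathcal{E})\leq 1$. Away from $t=0$ the same geometric series is either a convergent constant ($t>0$) or contributes only the exponential $e^{-2dnt}$ that is already absorbed into $h_t(P_\mathcal{E})$ ($t<0$), forcing $\ph_t(P_\mathcal{E})=0$ in both cases.
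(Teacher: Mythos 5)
Your proof is correct and is exactly the argument the paper has in mind: the paper's own proof of Proposition~\ref{prop:PTwist} simply states ``Once we have Lemma~\ref{lemma:UpperBound2}, the proof goes exactly as in Proposition~\ref{prop:SphercialTwist}. We leave the details to the reader.'' You have filled in precisely those details (upper bounds via Lemma~\ref{lemma:UpperBound2}, lower bounds via $P^n_\mathcal{E}(\mathcal{E})\cong\mathcal{E}[-2dn]$ and $P_\mathcal{E}|_{\mathcal{E}^\perp}\cong\operatorname{id}$, and the normalization $h_t(P_\mathcal{E})=-2dt$ for $t\le 0$ and $0$ for $t>0$), so the approach coincides with the paper's.
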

\begin{proof}
Once we have Lemma \ref{lemma:UpperBound2}, the proof goes exactly as in Proposition \ref{prop:SphercialTwist}. We leave the details to the reader.
\end{proof}

\subsection{Autoequivalences for curves}
In this section, we study categorical polynomial entropy of autoequivalences of derived categories of smooth projective curves. The discussion splits into two parts: standard autoequivalences (which covers the cases of non-elliptic curves), and elliptic curves (where the Fourier--Mukai transform plays an essential  role).
\subsubsection{Standard autoequivalences}
\begin{prop}\label{prop:StarndardAutCurve}
Let $C$ be a smooth projective curve defined over an algebraically closed field $\k$. Let $F$ be a \textit{standard} autoequivalence of $\cD^b(C)$, namely, it is of the form $F=f^*(-\otimes L)[m]$ for some $f\in \Aut(C)$, $L\in \Pic(C)$, and $m\in \mathbb{Z}$. Then $\ph_t(F)$ is a constant function in $t$ with value
\[\ph(F)=\begin{cases}
0 & \text{ if } \deg(L)=0;\\
1 & \text{ if } \deg(L)\neq 0.
\end{cases}\]
In particular, $\ph(F)$ coincides with the polynomial growth rate of the induced action the Hochschild homology or the numerical Grothendieck group:
\[\ph(F)=s(\mathcal{N}(F)).\]
\end{prop}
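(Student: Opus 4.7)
The plan is to reduce the statement to explicit Riemann--Roch estimates, which is possible because a standard autoequivalence on a curve admits a very tractable iteration pattern.

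First, by Lemma \ref{lemma:Shifts} one may drop the shift and assume $F = f^*(-\otimes L)$. Since both $f^*$ (pullback under a flat morphism is exact) and $-\otimes L$ preserve the standard heart $\mathrm{Coh}(C) \subset \cD^b(C)$, the function $\ph_t(F)$ is constant in $t$ by Lemma \ref{lemm:ConstantFunction}, so it suffices to evaluate at $t=0$. Using $f^*(A\otimes L) = f^*A\otimes f^*L$, a short induction gives
\[
F^n(A) \;=\; (f^n)^*A \,\otimes\, L_n, \qquad L_n \;\coloneqq\; \bigotimes_{i=1}^{n} (f^i)^*L,
\]
and since $f$ is an automorphism, $\deg L_n = n\deg L$.

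Next, I would take the split generator $G = \mathcal{O}_C \oplus M$ of $\cD^b(C)$, where $M$ is an ample line bundle (existence follows from \cite[Theorem~4]{Orlov}). Then $F^n(G)$ decomposes as a direct sum of four line bundles, and
\[
\epsilon\bigl(G, F^n(G)\bigr) \;=\; \sum_{N_n} \Bigl( h^0(C, N_n) + h^1(C, N_n) \Bigr),
\]
with $N_n$ running over $L_n$, $L_n\otimes(f^n)^*M$, $L_n\otimes M^{-1}$, and $L_n\otimes M^{-1}\otimes(f^n)^*M$, each of degree $n\deg L + O(1)$. I would then split into two cases. If $\deg L = 0$, the four degrees are bounded uniformly in $n$, so the cohomologies $h^i(C,N_n)$ are bounded and $\epsilon(G, F^n(G)) = O(1)$, forcing $h^\cat(F) = 0$ and $\ph(F) = 0$. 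If $\deg L \neq 0$, each $|\deg N_n|$ grows linearly, so Riemann--Roch together with Serre duality yields $h^0(C,N_n) + h^1(C, N_n) = |\deg L|\cdot n + O(1)$, hence $\epsilon(G, F^n(G)) \sim 4|\deg L|\,n$; consequently $h^\cat(F) = 0$ and $\ph(F) = 1$.

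For the comparison with $s(\mathcal{N}(F))$, I would compute $\mathcal{N}(F)$ directly in the rank-and-degree basis of $\mathcal{N}(\cD^b(C))\cong\mathbb{Z}^2$:
\[
\mathcal{N}(F) \;=\; (-1)^m \begin{pmatrix} 1 & 0 \\ \deg L & 1 \end{pmatrix},
\]
which is $\pm I$ (so $s=0$) when $\deg L = 0$, and a $2\times 2$ Jordan block with eigenvalue $\pm 1$ (so $s=1$) otherwise, matching $\ph(F)$; the Hochschild variant follows by transporting along the Mukai vector. The main obstacle is that $f^*$ and $-\otimes L$ do not commute in general, so Lemma \ref{lemma:composition} cannot be invoked directly. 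This is bypassed by the explicit iteration formula above, which is available precisely because $\dim C = 1$ reduces the entire computation to degrees of line bundles.
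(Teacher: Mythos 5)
Your proof is correct and follows essentially the same strategy as the paper's: drop the shift by Lemma~\ref{lemma:Shifts}, invoke Lemma~\ref{lemm:ConstantFunction} because $F$ preserves the standard heart, take a split generator which is a direct sum of line bundles, and read off the growth of $\epsilon$ from Riemann--Roch. The one point where you deviate slightly is worth noting: the paper takes $G = \mathcal{O}(1)\oplus\mathcal{O}(2)$ and computes $\epsilon(G, F^nG^\vee)$, crucially so that for $n$ large all four summands of $G^\vee\otimes F^n(G^\vee)$ have uniformly positive (resp.\ negative) degree, whence $\epsilon$ collapses to the single Euler characteristic $|\chi(G^\vee\otimes G^\vee\otimes L^{\otimes n})|$; you instead estimate $\epsilon(G,F^nG)$ directly as $\sum(h^0+h^1)$ over the four line bundles $N_n$ and use the uniform bound $h^0(N)+h^1(N) = |\deg N| + O(1)$, valid once $|\deg N|$ leaves $[0,2g-2]$, together with boundedness of $h^i$ when the degrees stay bounded. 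This is a minor cosmetic variant of the same argument --- your version is marginally more self-contained since it does not lean on the degree-sign observation from Kikuta--Takahashi, but both reduce to the same one-line computation. Your explicit matrix for $\cN(F)$ in the (rank, degree) basis is correct, $f^*$ indeed acting trivially since automorphisms preserve both rank and degree, and it directly gives $s(\cN(F))\in\{0,1\}$ matching $\ph(F)$.
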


\begin{proof}
By Lemma \ref{lemma:Shifts}, we can assume that $m=0$, hence $F=f^*(-\otimes  L)$. Since $F$ preserves the standard t-structure, $\ph_t(F)$ is constant in $t$ by Lemma \ref{lemm:ConstantFunction}. Fix an ample  line bundle $\mathcal{O}(1)$ on $C$ and consider a split generator $G=\mathcal{O}(1)\oplus\mathcal{O}(2)$. As is observed in \cite[Proof of Proposition 3.3]{KiTa}, if $L$ has positive degree, then for $n$ large enough, $G^\vee\otimes F^n(G^\vee)$ is a direct sum of line bundles of positive degree; if $L$ has non-positive degree,  then $G^\vee\otimes F^n(G^\vee)$ is a direct sum of line bundles of negative degree. Therefore, using the Riemann--Roch formula, 
\begin{align*}
\epsilon(G, F^n(G^\vee))&=|\chi(C, G^\vee\otimes F^n(G^\vee)|\\
&= |\deg(G^\vee\otimes F^n(G^\vee))+4(1-g)|\\
&= |\deg(G^\vee\otimes G^\vee\otimes L^{\otimes n})+4(1-g)|,
\end{align*}
which is constant in $n$ if $\deg(L)=0$ and has linear growth in $n$ if $\deg(L)\neq 0$.
\end{proof}

\subsubsection{Elliptic curves}
Let $(E, x_0)$ be a smooth projective curve of genus 1 defined over an algebraically closed field $\k$, together with a closed point $x_0\in E$.
Define autoequivalences
\[
T \coloneqq (- \otimes \cO(x_0)) \text{ \ and \ }
S \coloneqq \Phi_\cP,
\]
where $\Phi_\cP$ is the Fourier--Mukai transform along the Poincar\'e line bundle $\cP\in\Coh(E\times E)$.
The natural map
\[
\phi\colon \Aut(\cD^b(E)) \ra \Aut(\mathcal{N}(E),\chi) \cong \mathrm{SL}(2,\bZ)
\]
is surjective, which sends
\[
T \mapsto
\begin{pmatrix}
1 & 0\\
1 & 1
\end{pmatrix}
\text{ \ and \ }
S \mapsto
\begin{pmatrix}
0 & 1\\
-1 & 0
\end{pmatrix}
\]
with respect to the basis $\{[\cO_E],[\cO_{x_0}]\}$ of the numerical Grothendieck group $\mathcal{N}(E)$. Here $\Aut(\mathcal{N}(E),\chi)$ is the group of isometries of  $\mathcal{N}(E)$ with respect to the Euler pairing $\chi$.
It is well-known that there is a short exact sequence
\[
1 \ra \Aut(E)\ltimes(\Pic^0(E)\times\bZ[2]) \ra \Aut(\cD^b(E)) \xrightarrow{\phi}
\mathrm{SL}(2,\bZ) \ra 1.
\]

\begin{lemma}
\label{lemma:ellcurvethroughSL2}
The map
\[
h^\pol\colon \Aut(\cD^b(E)) \ra [-\infty,\infty]
\]
factors through $\phi\colon\Aut(\cD^b(E))\ra\mathrm{SL}(2, \bZ)$.
\end{lemma}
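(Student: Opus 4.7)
The plan is to show that $h^\pol(FK) = h^\pol(F)$ for every $F\in\Aut(\cD^b(E))$ and every $K\in\ker\phi$; together with Lemma \ref{lemma:CommutationConjugation}, this will establish the factorization through $\phi$. Using the structure $\ker\phi \cong \Aut(E) \ltimes (\Pic^0(E)\times\bZ[2])$, any $K$ may be written in the form $K = g^* \circ (-\otimes L_0) \circ [2m]$ with $g\in\Aut(E)$, $L_0\in\Pic^0(E)$, and $m\in\bZ$. Lemma \ref{lemma:Shifts} disposes of the shift factor, so we reduce to the case $K = g^*\circ(-\otimes L_0)$.

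The key observation exploits the classification of objects in $\cD^b(E)$. By Orlov's theorem every autoequivalence is a Fourier--Mukai transform, and combined with Atiyah's classification of indecomposable coherent sheaves on elliptic curves, an autoequivalence sends any shifted indecomposable to a shifted indecomposable. In particular, for the split generator $G = \mathcal{O}_E \oplus \mathcal{O}_{x_0}$, both $F^n(G)$ and $(FK)^n(G)$ decompose as direct sums of shifts of indecomposable coherent sheaves. Since $\cN(FK) = \cN(F)$, the corresponding indecomposable summands share the same rank and degree, and differ only in their moduli parameters in $E$ and in their homological shifts.

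On $E$, Riemann--Roch gives $\chi(\mathcal{O}_E, A) = d$ and $\chi(\mathcal{O}_{x_0}, A) = -r$ for any coherent sheaf $A$ of rank $r$ and degree $d$. For an indecomposable semistable sheaf $A$ of slope $d/r \neq 0$, appropriate vanishing forces
\[
\sum_k \dim\Hom(\mathcal{O}_E, A[k]) = |d|, \qquad \sum_k \dim\Hom(\mathcal{O}_{x_0}, A[k]) = |r|,
\]
both of which depend only on $(r, d)$. In the slope-zero case the dimensions may vary with the moduli point in $\Pic^0(E) \cong E$ or with coincidence of supports of skyscrapers, but only by a bounded additive correction. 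Summing these contributions yields
\[
\epsilon(G, (FK)^n G) = \epsilon(G, F^n G) + O(1).
\]
Combined with Kikuta's Gromov--Yomdin equality $h^\cat(FK) = h^\cat(F) = \log\rho(\cN(F))$ for elliptic curves, this forces $h^\pol(FK) = h^\pol(F)$, as required.

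The main obstacle is rigorously bounding the $O(1)$ discrepancy coming from slope-zero (or torsion-supported) summands, where the moduli point actually influences the dimension count. This is tractable because the moduli of slope-zero indecomposables is the compact curve $E$ itself, and the operations $g^*$ and $-\otimes L_0$ act on these moduli by a regular automorphism and a translation respectively, yielding uniform bounds independent of $n$.
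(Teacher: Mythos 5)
Your approach is genuinely different from the paper's, which follows Kikuta's Lemma~3.4 closely: the paper exploits normality of $\ker\phi$ to write $F^n = F'^n g_n$ with $g_n\in\ker\phi$, then bounds $\delta(G, g_n G^\vee)$ by a constant by showing $\epsilon(G, g_n G^\vee) = |\chi(G, G^\vee)|$, which relies on the specific choice $G=\cO(1)\oplus\cO(2)$ forcing all $\operatorname{Ext}$ groups into a single degree. Your route compares $\epsilon(G, F^n G)$ and $\epsilon(G, (FK)^n G)$ head-on via the Atiyah classification and a purely additive $O(1)$ bound. That is arguably cleaner: it avoids the normality bookkeeping (you only need $\cN(K)=\mathrm{id}$, so $\cN((FK)^n)=\cN(F^n)$, which is immediate), and it gives $h^\cat(FK)=h^\cat(F)$ and $\ph(FK)=\ph(F)$ simultaneously — so your invocation of Kikuta's Gromov--Yomdin equality at the end is actually superfluous, since $\log\epsilon(G,(FK)^nG) = \log\epsilon(G,F^nG)+O(1)$ already forces both equalities.

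The one genuine gap is the $O(1)$ claim, which you flag yourself, but the moduli-compactness argument you gesture at is the wrong fix and not needed. The bound is elementary and uniform in the moduli parameter: for either summand $G_i\in\{\cO_E,\cO_{x_0}\}$ of $G$ and any indecomposable coherent sheaf $A$ on $E$, one has
$$0 \;\leq\; \epsilon(G_i, A) - |\chi(G_i, A)| \;\leq\; 2,$$
because the correction equals $2\min\!\big(\dim\Hom(G_i,A),\dim\operatorname{Ext}^1(G_i,A)\big)$, and for an indecomposable $A$ on an elliptic curve this minimum is $0$ unless $A$ has slope zero (for $G_i=\cO_E$) or is torsion supported at $x_0$ (for $G_i=\cO_{x_0}$), in which case it equals $1$. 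Since $\chi(G_i,-)$ depends only on the numerical class and $\cN((FK)^n(G))=\cN(F^n(G))$, summing the four terms gives a uniform discrepancy $\leq 16$, independent of $n$ and of the moduli point. Write that out explicitly and your argument closes cleanly.
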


\begin{proof}
We follow the same idea of the proof of \cite[Lemma~3.4]{Kikuta}.
The aim is to show that if
\[
F = F' g
\]
for some $F,F'\in\Aut(\cD^b(E))$ and $g\in\Aut(E)\ltimes(\Pic^0(E)\times\bZ[2])$,
then
\[
h^\pol(F) = h^\pol(F').
\]
Since $\Aut(E)\ltimes(\Pic^0(E)\times\bZ[2])$ is a normal subgroup in $\Aut(\cD^b(E))$, for each $n$ there exists $g_n\in\Aut(E)\ltimes(\Pic^0(E)\times\bZ[2])$ such that
\[
F^n = F'^n g_n.
\]
Fix an ample line bundle $\cO(1)$ on $E$ and let $G=\cO(1)\oplus\cO(2)$ be a split generator.
Then
\begin{align*}
    \delta(G,F^nG^\vee) & \leq \delta(G, F'^nG)\delta(F'^nG,F^nG^\vee) \\
    &  = \delta(G, F'^nG)\delta(F'^nG,F'^ng_nG^\vee) \\
    & \leq \delta(G, F'^nG)\delta(G, g_nG^\vee)
\end{align*}
By \cite[Lemma~3.4]{Kikuta}, we have $h^\cat(F)=h^\cat(F')$. Therefore,
\begin{equation}
\label{eq:ellcurveSL2}
    \frac{\log\delta(G,F^nG^\vee)-nh^\cat(F)}{\log n} 
    \leq \frac{\log\delta(G, F'^nG)-nh^\cat(F')}{\log n} +
    \frac{\log\delta(G, g_nG^\vee)}{\log n}.
\end{equation}
The argument in the proof of \cite[Lemma~3.4]{Kikuta} shows that
\[
\lim_{n\ra\infty}\frac{\log\delta(G, g_nG^\vee)}{\log n} = 
\lim_{n\ra\infty}\frac{\log\ep(G, g_nG^\vee)}{\log n} = 
\lim_{n\ra\infty}\frac{\log|\chi(G, G^\vee)|}{\log n} =0.
\]
Taking limit $n\ra\infty$ of \eqref{eq:ellcurveSL2} gives $h^\pol(F)\leq h^\pol(F')$.
One can prove $h^\pol(F')\leq h^\pol(F)$ using the same argument.
Hence we have $h^\pol(F)=h^\pol(F')$.
\end{proof}

The main  result of this section is the following trichotomy, where the statements concerning categorical entropy is due to Kikuta \cite[Section 3.2]{Kikuta}. One sees clearly how  polynomial entropy further refines his study.
\begin{thm}
\label{thm:ellcurveclassify}
Let $F\in \Aut(\cD^b(E))$. We have
    \begin{enumerate}[label=(\roman*)]
        \item $h^\pol(F)=h^\cat(F)=0$ if and only if
        $\cN(F)$ is elliptic (i.e.~$|\tr(\cN(F))|<2$) or 
        $\cN(F)=\pm\mathrm{id}$.
        
        \item $h^\pol(F)>0$ and $h^\cat(F)=0$ if and only if
        $\cN(F)$ is parabolic (i.e.~$|\tr(\cN(F))|=2$)
        and $\cN(F)\neq\pm\mathrm{id}$. In this case, $\ph(F)=1$.
        
        \item $h^\cat(F)>0$ if and only if 
        $\cN(F)$ is hyperbolic (i.e.~$|\tr(\cN(F))|>2$).
        In this case, $h^\pol(F)=0$.
    \end{enumerate}
Moreover, we have the following Gromov--Yomdin-type equality for the categorical polynomial entropy:
\[
h^\pol(F) = s (\cN(F)).
\]
\end{thm}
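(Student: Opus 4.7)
By Lemma~\ref{lemma:ellcurvethroughSL2}, both $\ph(F)$ and $h^\cat(F)$ depend only on $\cN(F)\in\mathrm{SL}(2,\bZ)$, so in each conjugacy class one may replace $F$ by a convenient representative. All statements concerning $h^\cat$ are already due to Kikuta \cite{Kikuta}; the task is therefore to determine $\ph$. In case (i), if $\cN(F)=\pm\mathrm{id}$, take $F=\mathrm{id}$ or $F=[1]$ and apply Lemma~\ref{lemma:Shifts}. If instead $\cN(F)$ has finite order $m>1$ in $\mathrm{PSL}(2,\bZ)$, then $F^m$ lies in the $\mathrm{id}$- or $(-\mathrm{id})$-coset of $\ker\phi$, i.e.\ has the form $f^*(-\otimes L_0)[j]$ with $f\in\Aut(E)$, $L_0\in\Pic^0(E)$. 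Iterating gives $F^{mn}G\simeq (f^n)^*(G\otimes L_n)[jn]$ for some $L_n\in\Pic^0(E)$; choosing $G=\cO\oplus\cO(1)$, the summands of $(f^n)^*(G\otimes L_n)$ are line bundles with the same degrees as those of $G$, so all relevant Hom-dimensions, hence $\delta(G,F^{mn}G)$, are uniformly bounded in $n$. Combined with the triangle inequality applied to $F^{mn+r}$ for $0\le r<m$, this yields $\ph(F)=0$. In case (ii), every parabolic $\cN(F)$ with $\mathrm{tr}=2$ is conjugate in $\mathrm{SL}(2,\bZ)$ to $\cN(-\otimes\cO(nx_0))$ for some $n\neq 0$, and the $\mathrm{tr}=-2$ case is handled by an extra $[1]$; Lemmas~\ref{lemma:ellcurvethroughSL2}, \ref{lemma:CommutationConjugation} and \ref{lemma:Shifts} then reduce us to $F=-\otimes\cO(nx_0)$, and Proposition~\ref{prop:StarndardAutCurve} gives $\ph(F)=1$.

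\textbf{Hyperbolic case.} Kikuta's theorem gives $h^\cat(F)=\log\rho(\cN(F))>0$, so Proposition~\ref{prop:GYLowerBoundPolEntropy} already supplies the lower bound $\ph(F)\geq s(\cN(F))=0$, since a hyperbolic element of $\mathrm{SL}(2,\bZ)$ is diagonalizable over $\bR$ with two distinct real eigenvalues. For the matching upper bound, I would invoke Atiyah's classification: every indecomposable object of $\cD^b(E)$ is a shift of a semistable indecomposable coherent sheaf, and autoequivalences preserve indecomposability. Taking $G=\cO\oplus\cO(1)$, one writes $F^n\cO\simeq H_n[k_n]$ and $F^n\cO(1)\simeq H_n'[k_n']$ for semistable indecomposables whose classes are $\pm\cN(F)^n[\cO]$ and $\pm\cN(F)^n[\cO(1)]$; in particular both the rank and the degree of $H_n,H_n'$ grow at most like $\rho(\cN(F))^n$. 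Combining Riemann--Roch on $E$ (which reads $\chi=\deg$) with the vanishing $H^1=0$ on positive-slope semistables (via Serre duality and the absence of maps from positive-slope semistables to $\cO$), one bounds $\dim\Hom^\bullet(\cO(i),H_n\otimes L)$ linearly in $|\mathrm{rk}\,H_n|+|\deg H_n|$, hence by $O(\rho(\cN(F))^n)$. Summing over the finitely many summand pairs and applying Lemma~\ref{lemm:EntropyInExt-distance} yields $\epsilon(G,F^nG)=O(\rho(\cN(F))^n)$, and consequently $\ph(F)\le 0$.

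\textbf{Gromov--Yomdin identity and main obstacle.} The equality $\ph(F)=s(\cN(F))$ is automatic from the above case analysis: $s(\cN(F))=0$ in the elliptic and hyperbolic cases, while $s(\cN(F))=1$ for parabolic $\cN(F)\neq\pm\mathrm{id}$ (a single Jordan block of size $2$ at the eigenvalue $\pm 1$), matching the computed value of $\ph(F)$. The principal obstacle is the upper bound in the hyperbolic case: while Atiyah's theorem rapidly reduces the question to cohomology of indecomposable semistable sheaves, the slope-zero factors (where neither $H^0$ nor $H^1$ automatically vanishes) must be handled carefully to guarantee that the total contribution stays linear in the norm of $\cN(F)^n[G_i]$, which then gives the desired $O(\rho(\cN(F))^n)$ bound.
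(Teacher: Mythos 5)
Your proposal is correct and close to the paper's argument, but you diverge in two places, both in interesting ways. For (i), the paper chooses explicit representatives $S$, $ST$, $(ST)^2$ of the elliptic conjugacy classes and invokes Seidel--Thomas to see that they are of finite order up to shifts; you instead pass to a power $F^m$ landing in the $\pm\mathrm{id}$-coset of $\ker\phi$ and show by direct iteration that the degrees of the summands of $F^{mn}G$ stabilize, then interpolate the remaining residues via the triangle inequality. This is perhaps slightly more self-contained, though it is morally the same finiteness argument. For (iii), the paper reuses Kikuta's vanishing argument to get the exact formula $\ep(G^\vee,F^nG)=|\chi(G\otimes F^nG)|$ and then reads off the growth from the eigenvalue decomposition $A\rho^n+B\rho^{-n}$; you instead only establish the upper bound $\ep(G,F^nG)=O(\rho(\cN(F))^n)$ directly from Atiyah's classification of semistable indecomposables, and pair it with the Yomdin-type lower bound (Proposition~\ref{prop:GYLowerBoundPolEntropy}). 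Both routes work. Regarding your flagged obstacle on slope-zero factors: this is in fact benign. An indecomposable degree-zero semistable bundle on $E$ is of the form $L\otimes F_r$ with $L\in\Pic^0(E)$ and $F_r$ the Atiyah bundle, so $h^0$ and $h^1$ are each at most $1$; moreover, since $\cN(F)^n[G_i]$ converges in direction to the expanding eigenvector of the hyperbolic matrix $\cN(F)$, the slope-zero configuration can occur only for finitely many $n$ anyway. Either observation closes the gap, so your plan carries through.
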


\begin{proof}
$(i)$. An elliptic element in $\mathrm{SL}(2,\bZ)$ is conjugate to either
\[
\pm\begin{pmatrix}
0 & 1 \\
-1 & 0
\end{pmatrix}
\text{, }
\pm\begin{pmatrix}
1 & 1 \\
-1 & 0
\end{pmatrix}
\text{, or }
\pm\begin{pmatrix}
0 & 1 \\
-1 & -1
\end{pmatrix}.
\]
Observe that
\[
\phi(S)=
\begin{pmatrix}
0 & 1 \\
-1 & 0
\end{pmatrix},
\ 
\phi(ST)=
\begin{pmatrix}
1 & 1 \\
-1 & 0
\end{pmatrix},
\text{ and }
\phi((ST)^2)=
\begin{pmatrix}
0 & 1 \\
-1 & -1
\end{pmatrix}.
\]
Hence if $\cN(F)$ is elliptic, by Lemma~\ref{lemma:ellcurvethroughSL2}, we have
\[
h^\pol(F)=h^\pol(F'),
\]
where $F'\in\{S, ST, (ST)^2\}$.
Since $S, ST, (ST)^2$ are all of finite order up to shifts \cite[Section 3d]{SeidelThomas}, their categorical and polynomial entropy both vanish.
Hence
\[
h^\pol(F)=h^\cat(F)=0
\]
for autoequivalences $F$ such that $\cN(F)$ is elliptic.
Moreover, in this case $\cN(F)$ is of finite order, hence
\[
h^\pol(F)=s(\cN(F))=0.
\]
$(ii)$. A parabolic element in $\mathrm{SL}(2,\bZ)$ is conjugate to
\[
\pm\begin{pmatrix}
1 & 0 \\
n & 1
\end{pmatrix}
\]
for some $n\in\bZ$.
Hence if $\cN(F)$ is parabolic, by Lemma~\ref{lemma:ellcurvethroughSL2},
\[
\phi(F) = \phi(T^n)
\]
for some $n\in\bZ$.
By Proposition \ref{prop:StarndardAutCurve}, we have
\[
h^\cat(F)=0 \text{ \ and \ } h^\pol(F)=1
\]
if $n\neq0$.
Moreover, in this case $\cN(F)$ is quasi-unipotent and has a single Jordan block of size $2$. Therefore, we have
\[
h^\pol(F)=s(\cN(F))=1.
\]
$(iii)$. Suppose $\cN(F)$ is hyperbolic.
By \cite[Proposition~3.9]{Kikuta}, we have
\[
h^\cat(F) = \log \rho(\mathcal{N}(F)) > 0.
\]
Moreover, using the argument of the proof of \cite[Proposition~3.9]{Kikuta}, we have
\begin{align*}
h^\pol(F) & = \limsup_{n\ra\infty} \frac{\log\ep(G^\vee,F^nG)-nh^\cat(F)}{\log n} \\
& = \limsup_{n\ra\infty}\frac{\log|\chi(G\otimes F^nG)|-n\log\rho(\mathcal{N}(F))}{\log n}.
\end{align*}
Since
\[
\chi(G\otimes F^nG) = A \rho(\mathcal{N}(F))^n + B\rho(\mathcal{N}(F))^{-n}
\]
for some constant $A,B$ with $A\neq0$, we obtain that $\ph(F)=0$.
Moreover, in this case $\cN(F)$ has eigenvalues $\rho(\mathcal{N}(F))>1$ and $1>\rho(\mathcal{N}(F))^{-1}>0$. Each of the eigenvalues has a single Jordan block of size one. Hence
$h^\pol(F)=s(\cN(F))=0.$
\end{proof}

%\textcolor{red}{compute the categorical entropy function and the categorical polynomial entropy function; not easy to locate the cohomological degrees... need to think more. (Lie) I think the method of Yoshioka can be useful here to determine the function.}

%\textcolor{red}{Do we have an example, where the Gromov--Yomdin-type lower bound is not attained, or better, the categorical polynomial entropy is not an integer? }

%\subsection{Varieties with ample (anti-)canonical bundle}
%\subsection{Abelian surfaces (or varieties?)}

\bigskip
\bibliography{cat_poly_entropy}

\begin{thebibliography}{10}

\bibitem{MR175106}
R.~L. Adler, A.~G. Konheim, and M.~H. McAndrew.
\newblock Topological entropy.
\newblock {\em Trans. Amer. Math. Soc.}, 114:309--319, 1965.

\bibitem{MR3692883}
R.~Anno and T.~Logvinenko.
\newblock Spherical {DG}-functors.
\newblock {\em J. Eur. Math. Soc. (JEMS)}, 19(9):2577--2656, 2017.

\bibitem{BernardLabrousse}
P.~Bernard and C.~Labrousse.
\newblock An entropic characterization of the flat metrics on the two torus.
\newblock {\em Geom. Dedicata}, 180:187--201, 2016.

\bibitem{BondalKapranov}
A.~I. Bondal and M.~M. Kapranov.
\newblock Representable functors, {S}erre functors, and reconstructions.
\newblock {\em Izv. Akad. Nauk SSSR Ser. Mat.}, 53(6):1183--1205, 1337, 1989.

\bibitem{BridgelandAnnals}
T.~Bridgeland.
\newblock Stability conditions on triangulated categories.
\newblock {\em Ann. of Math. (2)}, 166(2):317--345, 2007.

\bibitem{BQS20}
T.~Bridgeland, Y.~Qiu, and T.~Sutherland.
\newblock Stability conditions and the {$A_2$} quiver.
\newblock {\em Adv. Math.}, 365:107--049, 33, 2020.

\bibitem{BS15}
T.~Bridgeland and I.~Smith.
\newblock Quadratic differentials as stability conditions.
\newblock {\em Publ. Math. Inst. Hautes \'{E}tudes Sci.}, 121:155--278, 2015.

\bibitem{Cantat99}
S.~Cantat.
\newblock Dynamique des automorphismes des surfaces projectives complexes.
\newblock {\em C. R. Acad. Sci. Paris S\'{e}r. I Math.}, 328(10):901--906,
  1999.

\bibitem{CanPR}
S.~Cantat and O.~Paris-Romaskevich.
\newblock Automorphisms of compact {K}\"{a}hler manifolds with slow dynamics.
\newblock {\em Trans. Amer. Math. Soc.}, 374(2):1351--1389, 2021.

\bibitem{Dang}
N.-B. Dang.
\newblock Degrees of iterates of rational maps on normal projective varieties.
\newblock {\em Proc. Lond. Math. Soc. (3)}, 121(5):1268--1310, 2020.

\bibitem{DillerFavre}
J.~Diller and C.~Favre.
\newblock Dynamics of bimeromorphic maps of surfaces.
\newblock {\em Amer. J. Math.}, 123(6):1135--1169, 2001.

\bibitem{DHKK}
G.~Dimitrov, F.~Haiden, L.~Katzarkov, and M.~Kontsevich.
\newblock Dynamical systems and categories.
\newblock In {\em The influence of {S}olomon {L}efschetz in geometry and
  topology}, volume 621 of {\em Contemp. Math.}, pages 133--170. Amer. Math.
  Soc., Providence, RI, 2014.

\bibitem{DinhJGA}
T.-C. Dinh.
\newblock Suites d'applications m\'{e}romorphes multivalu\'{e}es et courants
  laminaires.
\newblock {\em J. Geom. Anal.}, 15(2):207--227, 2005.

\bibitem{DinhSibonyENS}
T.-C. Dinh and N.~Sibony.
\newblock Regularization of currents and entropy.
\newblock {\em Ann. Sci. \'{E}cole Norm. Sup. (4)}, 37(6):959--971, 2004.

\bibitem{DinhSibonyJAMS}
T.-C. Dinh and N.~Sibony.
\newblock Green currents for holomorphic automorphisms of compact {K}\"{a}hler
  manifolds.
\newblock {\em J. Amer. Math. Soc.}, 18(2):291--312, 2005.

\bibitem{DinhSibonyAnnals}
T.-C. Dinh and N.~Sibony.
\newblock Une borne sup\'{e}rieure pour l'entropie topologique d'une
  application rationnelle.
\newblock {\em Ann. of Math. (2)}, 161(3):1637--1644, 2005.

\bibitem{Dunbar}
W.~D. Dunbar.
\newblock Classification of solvorbifolds in dimension three.
\newblock In {\em Braids (Santa Cruz, CA, 1986)}, volume~78 of {\em Contemp.
  Math.}, pages 207--216. Amer. Math. Soc., 1988.

\bibitem{FanPtwists}
Y.-W. Fan.
\newblock On entropy of $\mathbb{P}$-twists, 2018.
\newblock arXiv:1801.10485.

\bibitem{FFHKL}
Y.-W. Fan, S.~Filip, F.~Haiden, L.~Katzarkov, and Y.~Liu.
\newblock {On pseudo-Anosov autoequivalences}, 2019.
\newblock arXiv:1910.12350.

\bibitem{Fujita}
T.~Fujita.
\newblock Vanishing theorems for semipositive line bundles.
\newblock In {\em Algebraic geometry ({T}okyo/{K}yoto, 1982)}, volume 1016 of
  {\em Lecture Notes in Math.}, pages 519--528. Springer, Berlin, 1983.

\bibitem{GMN}
D.~Gaiotto, G.~W. Moore, and A.~Neitzke.
\newblock Wall-crossing, {H}itchin systems, and the {WKB} approximation.
\newblock {\em Adv. Math.}, 234:239--403, 2013.

\bibitem{Ginzburg}
V.~Ginzburg.
\newblock {Calabi--Yau algebras}, 2006.
\newblock arXiv:math/0612139.

\bibitem{GromovAsterisque}
M.~Gromov.
\newblock Entropy, homology and semialgebraic geometry.
\newblock Number 145-146, pages 5, 225--240. 1987.
\newblock S\'{e}minaire Bourbaki, Vol. 1985/86.

\bibitem{GromovConvexSets}
M.~Gromov.
\newblock Convex sets and {K}\"{a}hler manifolds.
\newblock In {\em Advances in differential geometry and topology}, pages 1--38.
  World Sci. Publ., Teaneck, NJ, 1990.

\bibitem{Gromov}
M.~Gromov.
\newblock On the entropy of holomorphic maps.
\newblock {\em Enseign. Math. (2)}, 49(3-4):217--235, 2003.

\bibitem{HKK}
F.~Haiden, L.~Katzarkov, and M.~Kontsevich.
\newblock Flat surfaces and stability structures.
\newblock {\em Publ. Math. Inst. Hautes \'{E}tudes Sci.}, 126:247--318, 2017.

\bibitem{HauseuxLeRoux}
L.~Hauseux and F.~Le~Roux.
\newblock Entropie polynomiale des hom\'{e}omorphismes de {B}rouwer.
\newblock {\em Ann. H. Lebesgue}, 2:39--57, 2019.

\bibitem{HuybrechtsThomas}
D.~Huybrechts and R.~Thomas.
\newblock {$\mathbb{P}$}-objects and autoequivalences of derived categories.
\newblock {\em Math. Res. Lett.}, 13(1):87--98, 2006.

\bibitem{Ikeda}
A.~Ikeda.
\newblock Mass growth of objects and categorical entropy.
\newblock {\em Nagoya Math. J.}, pages 1--22, 2020.

\bibitem{KatokHasselblatt}
A.~Katok and B.~Hasselblatt.
\newblock {\em Introduction to the modern theory of dynamical systems},
  volume~54 of {\em Encyclopedia of Mathematics and its Applications}.
\newblock Cambridge University Press, Cambridge, 1995.
\newblock With a supplementary chapter by Katok and Leonardo Mendoza.

\bibitem{Keller}
B.~Keller.
\newblock Deformed {C}alabi-{Y}au completions.
\newblock {\em J. Reine Angew. Math.}, 654:125--180, 2011.
\newblock With an appendix by Michel Van den Bergh.

\bibitem{Kikuta}
K.~Kikuta.
\newblock On entropy for autoequivalences of the derived category of curves.
\newblock {\em Adv. Math.}, 308:699--712, 2017.

\bibitem{KST18}
K.~Kikuta, Y.~Shiraishi, and A.~Takahashi.
\newblock A note on entropy of auto-equivalences: lower bound and the case of
  orbifold projective lines.
\newblock {\em Nagoya Mathematical Journal}, page 1–18, 2018.

\bibitem{KiTa}
K.~Kikuta and A.~Takahashi.
\newblock On the categorical entropy and the topological entropy.
\newblock {\em Int. Math. Res. Not. IMRN}, 2019(2):457--469, 2019.

\bibitem{KontsevichSoibelman08}
M.~Kontsevich and Y.~Soibelman.
\newblock {Stability structures, motivic Donaldson--Thomas invariants and
  cluster transformations}, 2008.
\newblock arXiv:0811.2435.

\bibitem{Labrousse12}
C.~Labrousse.
\newblock Flat metrics are strict local minimizers for the polynomial entropy.
\newblock {\em Regul. Chaotic Dyn.}, 17(6):479--491, 2012.

\bibitem{Labrousse12balls}
C.~Labrousse.
\newblock Polynomial growth of the volume of balls for zero-entropy geodesic
  systems.
\newblock {\em Nonlinearity}, 25(11):3049--3069, 2012.

\bibitem{LabrousseMarco}
C.~Labrousse and J.-P. Marco.
\newblock Polynomial entropies for {B}ott integrable {H}amiltonian systems.
\newblock {\em Regul. Chaotic Dyn.}, 19(3):374--414, 2014.

\bibitem{LazarsfeldI}
R.~Lazarsfeld.
\newblock {\em Positivity in algebraic geometry. {I}}, volume~48 of {\em
  Ergebnisse der Mathematik und ihrer Grenzgebiete. 3. Folge. A Series of
  Modern Surveys in Mathematics [Results in Mathematics and Related Areas. 3rd
  Series. A Series of Modern Surveys in Mathematics]}.
\newblock Springer-Verlag, Berlin, 2004.
\newblock Classical setting: line bundles and linear series.

\bibitem{LoB}
F.~Lo~Bianco.
\newblock On the cohomological action of automorphisms of compact {K}\"{a}hler
  threefolds.
\newblock {\em Bull. Soc. Math. France}, 147(3):469--514, 2019.

\bibitem{Marco13}
J.-P. Marco.
\newblock Polynomial entropies and integrable {H}amiltonian systems.
\newblock {\em Regul. Chaotic Dyn.}, 18(6):623--655, 2013.

\bibitem{Marco18}
J.-P. Marco.
\newblock Entropy of billiard maps and a dynamical version of the {B}irkhoff
  conjecture.
\newblock {\em J. Geom. Phys.}, 124:413--420, 2018.

\bibitem{OguisoICM}
K.~Oguiso.
\newblock Some aspects of explicit birational geometry inspired by complex
  dynamics.
\newblock In {\em Proceedings of the {I}nternational {C}ongress of
  {M}athematicians---{S}eoul 2014. {V}ol. {II}}, pages 695--721. Kyung Moon Sa,
  Seoul, 2014.

\bibitem{Orlov}
D.~Orlov.
\newblock Remarks on generators and dimensions of triangulated categories.
\newblock {\em Mosc. Math. J.}, 9(1):153--159, back matter, 2009.

\bibitem{Ouchi}
G.~Ouchi.
\newblock On entropy of spherical twists.
\newblock {\em Proc. Amer. Math. Soc.}, 148(3):1003--1014, 2020.

\bibitem{SeidelThomas}
P.~Seidel and R.~Thomas.
\newblock Braid group actions on derived categories of coherent sheaves.
\newblock {\em Duke Math. J.}, 108(1):37--108, 2001.

\bibitem{TTTruong}
T.~T. Truong.
\newblock Relative dynamical degrees of correspondences over a field of
  arbitrary characteristic.
\newblock {\em J. Reine Angew. Math.}, 758:139--182, 2020.

\bibitem{Yomdin}
Y.~Yomdin.
\newblock Volume growth and entropy.
\newblock {\em Israel J. Math.}, 57(3):285--300, 1987.

\end{thebibliography}
\bibliographystyle{abbrv}

\medskip
\emph{Yu-Wei Fan}, \texttt{ywfan@berkeley.edu} \\
\textsc{University of California at Berkeley, 735 Evans Hall, Berkeley, CA 94270, USA}\\

\emph{Lie Fu}, \texttt{fu@math.univ-lyon1.fr} \\
\textsc{Universit\'e Claude Bernard Lyon 1, 43 bd du 11 novembre 1918, 69622 Ville\-urbanne cedex, France}\\
$\&$\\
\textsc{Radboud University, Heyendaalseweg 135, 6525 AJ Nij\-megen, Netherlands}\\

\emph{Genki Ouchi},  \texttt{genki.ouchi@riken.jp},\\ \textsc{Interdisciplinary Theoretical and Mathematical Sciences Program, RIKEN, 2-1 Hirosawa, Wako, Saitama, 351-0198, Japan}  \\
\textsc{}

\end{document}